\documentclass[11pt]{amsart}

\usepackage{amssymb,amscd,amsthm,amsxtra}
\usepackage{dsfont,latexsym, color}
\usepackage{mathrsfs}

\usepackage{color}
\definecolor{citation}{rgb}{0.2,0.58,0.2} 
\definecolor{formula}{rgb}{0.1,0.2,0.6}
\definecolor{url}{rgb}{0.3,0,0.5} 
\definecolor{marrone}{rgb}{0.7,0.45,0.36} 
 
\usepackage[colorlinks=true,linkcolor=formula,urlcolor=url,citecolor=citation]{hyperref}


\vfuzz3pt 
\hfuzz2pt 

\textwidth = 13.8cm
\textheight = 20.8cm 
\oddsidemargin = 1.25cm
\evensidemargin = 1.25cm 
\topmargin = 1mm
\headheight = 0.2cm
\headsep = 0.7cm

\newtheorem{thm}{Theorem}[section]

\newtheorem{lemma}[thm]{Lemma}

\theoremstyle{definition}
\newtheorem{defn}[thm]{Definition}
\theoremstyle{remark}
\newtheorem{rem}[thm]{Remark}
\numberwithin{equation}{section}

\newcommand{\R}{{\mathds R}}
\newcommand{\Om}{{\Omega}}
\newcommand{\Mz}{\mathcal{M}}

\newcommand{\qq}{{\mathcal Q}}
\newcommand{\dx}{\,{\rm d}x}
\newcommand{\dt}{\,{\rm d}t}

\def\Xint#1{\mathchoice
      {\XXint\displaystyle\textstyle{#1}}%
      {\XXint\textstyle\scriptstyle{#1}}%
      {\XXint\scriptstyle\scriptscriptstyle{#1}}%
      {\XXint\scriptscriptstyle\scriptscriptstyle{#1}} %
\!\int}
   \def\XXint#1#2#3{{\setbox0=\hbox{$#1{#2#3}{\int}$}
        \vcenter{\hbox{$#2#3$}}\kern-.5\wd0}}
   
   \def\dashint{\Xint-}


\newlength{\defbaselineskip}
\setlength{\defbaselineskip}{\baselineskip}
\newcommand{\setlinespacing}[1]
           {\setlength{\baselineskip}{#1 \defbaselineskip}}

\makeatletter
\DeclareRobustCommand*{\bfseries}{%
  \not@math@alphabet\bfseries\mathbf
  \fontseries\bfdefault\selectfont
  \boldmath
}
\makeatother

\hyphenation{par-ma su-pe-rio-re ga-gli-ar-do}
 
\title 
[Global estimates for nonlinear parabolic equations]
{Global estimates \\[0.5ex]  for nonlinear parabolic equations}
\author[P. Baroni]{Paolo Baroni}
\email[P. Baroni]{\href{mailto:paolo.baroni@math.uu.se}{paolo.baroni@math.uu.se}}
\author[A. Di Castro]{Agnese Di Castro}
\email[A. Di Castro]{\href{mailto:agnese.dicastro@unipr.it}{agnese.dicastro@unipr.it}}
\author[G. Palatucci]{Giampiero Palatucci}
\email[G. Palatucci]{\href{mailto:giampiero.palatucci@unimes.fr}{giampiero.palatucci@unimes.fr}}

\address[P. Baroni]
{Deparment of Mathematics, Uppsala Universitet\\
L\"agerhyddsv\"agen 1, Box 480,
S-75106 Uppsala, Sweden}

\address[A. Di Castro, G. Palatucci]
{Dipartimento di Matematica, Universit\`a degli Studi di Parma
\\ Campus - Parco Area delle Scienze,~53/A
\\ 43124 Parma, Italia}

\begin{document}

\subjclass[2010]{Primary 35K55, 35B65;
Secondary 35K10, 46E30\vspace{1mm}}

\keywords{Nonlinear parabolic problems, Calder\'on-Zygmund theory, Measure data, Lorentz regularity, 
Rearrangement function spaces, Higher integrability.
\vspace{1mm}}

\thanks{{\em Thanks.} This work has been supported by the \href{http://prmat.math.unipr.it/~rivista/eventi/2010/ERC-VP/}{ERC grant 207573 ``Vectorial Problems''}. 
The authors would like to thank Giuseppe Mingione for his constant encouragement in their research.
}

\begin{abstract}
\small 
We consider nonlinear parabolic equations of the type
$$
u_t - \text{div}\, a(x, t, Du)= f(x,t) \ \,  \text{on} \ \Omega_T = \Omega\times (-T,0),
$$
under standard growth conditions on $a$, with $f$ only assumed to be integrable. 
We prove general decay estimates up to the boundary for level sets of the solutions $u$ and the gradient $Du$ which imply very general estimates in Lebesgue and Lorentz spaces. Assuming only that the  involved domains satisfy a mild exterior capacity density condition, we provide global regularity results.

\end{abstract}

\begin{center}
\begin{large}
{\sc To appear in \ J. Evol. Equations}
\end{large}
\end{center}
\vspace{1cm}

\maketitle

\setcounter{tocdepth}{2}

{\small \tableofcontents}

\setlinespacing{1.04} 

\newpage

\section{Introduction}\label{sec_intro} 
 
The aim of the present paper is to establish {\it global} regularity results for the solutions of the following class of Cauchy-Dirichlet problem
\begin{equation}\label{problema}
\begin{cases}
\displaystyle
u_t-\text{\rm div}\, a(x,t, Du) = f(x,t) \quad& \text{in} \ \Omega_T = \Omega\times (-T,0),\\
u = 0 & \text{on} \ \partial_{{\rm par}}\Om_T,
\end{cases}
\end{equation}
where $\Om$ is a bounded open set in $\R^n$, $n\geq 2$, $T>0$, $\partial_{{\rm par}}\Om_T$ is the usual parabolic boundary of $\Om_T$.
Here we assume that $f$, in the most general case, is a summable function in $\Om_T$, and $a:\Om_T\times\R^n\to\R^n$ is a Carath\'eodory regular vector field satisfying standard monotonicity and growth conditions; i.~\!e., 
\begin{equation}\label{hp_a}
\begin{cases}
\nu |z_2-z_1|^2\leq\langle a(x,t,z_2)-a(x,t,z_1),z_2-z_1 \rangle 
\\[1ex]
|a(x,t,z)|\leq L|z|
\end{cases}
\end{equation}
for every $z_1$, $z_2$, $z\in \mathbb{R}^n$ and $(x,t)\in\Omega_T$; the structure constants  satisfy $0<\nu\leq 1\leq L$.

\vspace{1mm}

We will focus here on the case when $f$ belongs to the Lebesgue space $L^\gamma(\Om_T)$ in a range of $\gamma$ that does not necessarily permit to obtain the existence 
of finite energy solutions $u \in L^2(-T,0;W^{1,2}_0(\Om))$ to problem~\eqref{problema}. 

\begin{rem} \label{remro} In this paper we are not going to consider any solution to the problem \eqref{problema}, but rather we shall deal with the (very) weak solutions  $u\in L^1(-T,0;W^{1,1}_0(\Om))$ obtained via the Boccardo-Gallou\"et by now standard approximation procedure~(\cite{BG89};
see forthcoming Section~\ref{sec_solvability}). These solutions are called SOLA (Solutions Obtained by Limits of Approximations) and turn out to be unique in their class in the case $f \in L^1(\Omega_T)$, see \cite[Theorem 4.1]{DallA}. 
For such reasons, with some abuse of terminology, we shall call 
in the following the unique SOLA to \eqref{problema} ``the solution" to problem \eqref{problema}.
\end{rem}

\vspace{1mm}

In the classical Lebesgue framework, Boccardo, Dall'Aglio, Gallou\"et and Orsina (see \cite[Theorem 1.9]{BDGO97}) show the existence of a unique solution 
\begin{equation}\label{bdgo}
\displaystyle
u \in L^q(-T,0; W^{1,q}_0(\Om)) \ \text{with} \ q=\frac{N\gamma}{N-\gamma}
\end{equation}
to problem~\eqref{problema} under the assumption~\eqref{hp_a}, provided that the datum $f$ satisfies
$$
f\in L^\gamma(\Om_T) \ \text{for some} \ 1 < \gamma < \frac{2N}{N+2}.
$$
Here $N:=n+2$ denotes the homogeneous parabolic dimension. Moreover, the solution $u$ belongs to $L^\sigma(\Om_T)$ with $\sigma=N\gamma/(N-2\gamma)$. This result is optimal in the scale of Lebesgue spaces;  however it is natural to ask for a more accurate scale to describe the regularity of $Du$ in dependence on the datum $f$. 
In this respect, recently, the results above were   
extended in all the most familiar spaces of rearrangement (Lebesgue, Lorentz, Orlicz) and non-rearrangement once, up to Lorentz-Morrey\footnote{We refer to Section~\ref{sec_spaces} for the definitions of the involved function spaces in the parabolic setting.},
by Habermann and the first author of this paper in~\cite{BH12b} (see, also, \cite{KM11,DP12b,KM12b}). 
We would like to remark that a fine analysis in Lorentz spaces regularity is often decisive to establish higher regularity, as it has been shown, for instance, by recent developments in the theory of harmonic and biharmonic mappings (see, e.g.,~\cite{Riv07,HW08,Min12c} and the references therein).
Precisely, in~\cite[Theorem 6.6]{BH12b}, starting from the datum $f$ in the Lorentz space $L(\gamma,q)(\Om_T)$, for $1<\gamma\leq 2N/(N+2)$, the authors prove that for any $0<q\leq\infty$
\begin{equation*}
\displaystyle
|Du| \in L\left(\frac{N\gamma}{N-\gamma}, q\right) \ \text{locally in} \ \Om_T,
\end{equation*}
extending to the parabolic setting a very general approach introduced by Mingione in~\cite{Min07,Min10,Min11} for the regularity of solutions to the elliptic counterpart of problem~\eqref{problema}.

After~\cite{Min07}, many important extensions and generalizations have been considered in the literature, in which a similar approach based on maximal operators techniques has been pursued to achieve regularity results in nonlinear degenerate problems: see, e.~\!g., \cite{Min11,DP12,DP12c} for elliptic problems, \cite{AM07,BH12b,DP12b,DM11,KM12b,KM12c} for parabolic ones and the references therein. It  is important to stress that  the aforementioned papers, despite considering more and more general settings, provide {\it local} results not considering the regularity properties {\it up to the boundary} of the solutions. 
 
 Recently, some attention has been devoted to such global extension, as in the paper by Phuc~\cite{Phu12}, where Lorentz regularity in the whole domain~$\Om$ has been analyzed for the solutions to the elliptic counterpart of problem~\eqref{problema} for general structures of $p$-Laplacian type, again taking into account the original potential approach in~\cite{Min10}\footnote{Although -- as stated in~\cite{Phu12}, too -- the possibility of extending such local results to global ones was already mentioned without proof in~\cite{Min10} and~\cite{Min11b}.
}.
\vspace{1.5mm}

In this paper, we will present a series of up-to-the-boundary regularity results aimed at giving, on one hand, an extension of the classical Lebesgue theory developed in~\cite{BDGO97} to a very general class of rearrangement invariant function spaces. For instance we will cover the case of Lorentz spaces, that do not seem to be treatable with the currently available methods. On the other hand,  we will give a set of techniques allowing for treating domains with very poorly regular boundaries. In this respect we also extend to the parabolic case the recent results obtained in~\cite{Phu12}.

Indeed, since we are dealing with precise estimates up to the boundary, it is necessary to take some regularity condition for $\Om_T=\Om\times(-T,0)$. For this, we assume that the domain $\Om$ has only to satisfy a capacity density condition. Namely, in order to achieve the main results of the present paper, the complement of the set~$\Om$ is required to be a {\it thick domain} (see Section~\ref{sec_thick} below); and such assumption is both very natural (as, for instance, in the classic general Sobolev embedding in capacity density settings; and, also, in the elliptic framework studied in~\cite{Phu12}), and very mild (it is fulfilled by any domain with Lipschitz boundary, or even of corkscrew type; also, in $\R^2$ by any proper simply connected domain). As a further matter, this condition is essentially sharp for higher integrability results, as pointed out  by Kilpel\"ainen and Koskela in Remark~3.3 in the paper \cite{KK94}, where the authors developed a first analysis in global Lebesgue regularity of the gradient of solutions to degenerate elliptic equations in general  
thick domains.

\vspace{1.5mm}

All in all, 
we extend the potential theoretic strategy recently developed by Mingione in~\cite{Min10,Min11} to the parabolic framework up to the boundary, combining it with global higher integrability results (in Theorems~\ref{gehring_boundary} in Section~\ref{sec_comparison}), to obtain general gradient estimates on level sets (see Section~\ref{sec_proofs}), which in turn imply the following
\begin{thm}\label{thm_du}
Let $\Om$ be such that $\R^n\setminus\Om$ is a thick domain with constants $c_0$ and $\rho_0$ and assume~\eqref{hp_a}. Let $f\in L(\gamma,q)(\Omega_T)$ with 
\[
1< \gamma \leq \frac{2N}{N+2}\qquad \text{and}\qquad 0<q\leq \infty.
\]
Then the solution $u\in L^1(-T,0;W^{1,1}_0(\Om))$ to~\eqref{problema} satisfies
$$
|Du| \in L\!\left(\frac{N\gamma}{N-\gamma},q\right) \, \text{in} \ \Omega_T.
$$
Moreover, the following estimate holds
\begin{equation}\label{zina}
\displaystyle
\| Du \|_{L\left(\frac{N\gamma}{N-\gamma},q\right)(\Omega_T)}
\, \leq  \, 
 c\, \| f \|_{L(\gamma, q)(\Omega_T)},
\end{equation}
where $c$ is a constant depending only on $n, q, L, \nu, \gamma, c_0, \rho_0, \text{\rm diam}(\Om)$.
\end{thm}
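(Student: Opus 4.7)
The plan is to follow the potential-theoretic, level-set strategy pioneered by Mingione in the elliptic case and extended to the parabolic interior in \cite{BH12b}, now pushed up to the boundary by exploiting the thick-domain hypothesis. The backbone consists of three ingredients: (a) comparison estimates on parabolic cylinders between $u$ and the solution $v$ of the corresponding homogeneous Cauchy--Dirichlet problem, (b) the global higher integrability for $Dv$ supplied by Theorem~\ref{gehring_boundary}, and (c) a parabolic Calder\'on--Zygmund covering, intrinsically rescaled with respect to the level $\lambda$ of $|Du|$, which converts the previous information into a decay estimate for the super-level sets $\{|Du|>\lambda\}$.

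For the first step, on every intrinsic parabolic cylinder $Q$ (interior or touching the lateral boundary) I would let $v$ solve on $Q\cap\Om_T$ the homogeneous equation $v_t-\text{\rm div}\,a(x,t,Dv)=0$ sharing with $u$ the parabolic boundary values, extended by zero on the lateral portion whenever $Q$ meets $\partial\Om\times(-T,0)$. From monotonicity and growth in~\eqref{hp_a}, combined with truncation \`a la Boccardo--Gallou\"et \cite{BG89}, one derives a quantitative comparison bound of the schematic form
$$
\dashint_{Q} |Du-Dv|\dx\dt \,\leq\, c\,\lambda^{1-\theta}\left(\dashint_{Q} |f|\dx\dt\right)^{\theta},
$$
with $\theta\in(0,1)$ encoding the very-weak SOLA regime arising when $\gamma\le 2N/(N+2)$. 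Coupling this with the reverse H\"older inequality for $|Dv|$ that follows from Theorem~\ref{gehring_boundary}, one obtains, on cylinders where the $|Du|$-mean is of order $\lambda$, a good-$\lambda$ inequality of the type
$$
\bigl|\{(x,t)\in Q:|Du|>A\lambda\}\bigr| \,\leq\, \eps\,|Q|+c\,\bigl|\{(x,t)\in Q:M_\ast(|f|^\sigma)^{1/\sigma}>\lambda\}\bigr|,
$$
for some $\sigma<\gamma$ and a suitable (restricted, parabolic) maximal operator $M_\ast$.

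In the second step, a Vitali-type covering argument, performed uniformly up to $\partial\Om$ thanks to the density estimate guaranteed by the thick-domain condition, globalizes the previous inequality to all of $\Om_T$. Iterating the good-$\lambda$ decay in the standard way then yields
$$
\bigl|\{(x,t)\in\Om_T:|Du|>\lambda\}\bigr|\,\leq\,c\,\lambda^{-p}\Bigl(\|f\|_{L^1(\Om_T)}^p+\bigl\|M_\ast(|f|^\sigma)^{1/\sigma}\bigr\|_{L^p(\Om_T)}^p\Bigr),\qquad p=\frac{N\gamma}{N-\gamma}.
$$
The Lorentz bound~\eqref{zina} is then obtained by feeding this distributional estimate into the rearrangement formula
$$
\|Du\|_{L(p,q)(\Om_T)}^q\,\simeq\,\int_0^\infty \lambda^{q-1}\bigl|\{|Du|>\lambda\}\bigr|^{q/p}\,d\lambda,
$$
and invoking the continuity of $M_\ast$ on $L(\gamma/\sigma,q/\sigma)$, which ultimately replaces the norm of $M_\ast(|f|^\sigma)^{1/\sigma}$ by a constant multiple of $\|f\|_{L(\gamma,q)(\Om_T)}$.

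The main obstacle I expect is deriving the boundary comparison and reverse H\"older estimates with constants depending only on the capacity data $(c_0,\rho_0)$. The thick-domain hypothesis does not allow any extension or reflection trick, so one has to argue via boundary Caccioppoli inequalities combined with a Sobolev--Poincar\'e inequality whose constant is governed by the capacity of $(\R^n\setminus\Om)\cap B_\rho$. A further delicate point is the passage to the limit in the Boccardo--Gallou\"et approximation: the level-set decay must be established for the regularized solutions with constants that remain stable as the regularization parameter vanishes, and this is precisely the role played by the global version of the higher integrability result in Theorem~\ref{gehring_boundary}.
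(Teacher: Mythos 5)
The overall architecture you sketch (comparison on parabolic cylinders, boundary Gehring theory, covering argument, good-$\lambda$ decay, Lorentz reconstitution) matches the paper's, but there is a gap at the single point where the gain in integrability has to come from, and it would make the argument fail at the final step.

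You propose to control the error term by a Hardy--Littlewood-type operator $M_\ast(|f|^\sigma)^{1/\sigma}$ for some $\sigma<\gamma$, and then close the estimate by "the continuity of $M_\ast$ on $L(\gamma/\sigma,q/\sigma)$." But $M_\ast$ maps $L(p,q)$ into $L(p,q)$ without improving the first Lorentz index. If you feed a bound of the form
\[
\bigl|\{(x,t)\in\Om_T:|Du|>\lambda\}\bigr|\,\lesssim\,\lambda^{-p}\,\Bigl(\|f\|_{L^1}^p+\bigl\|M_\ast(|f|^\sigma)^{1/\sigma}\bigr\|_{L^p}^p\Bigr)
\]
with $p=\frac{N\gamma}{N-\gamma}>\gamma$ into the Lorentz norm, the right-hand side is not controlled by $\|f\|_{L(\gamma,q)}$: boundedness of $M_\ast$ on $L^p$ would require $f\in L^p$, which is precisely what you do not have since $\gamma<p$. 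The ingredient that makes the exponent jump from $\gamma$ to $N\gamma/(N-\gamma)$ is the \emph{fractional} restricted maximal operator of order one,
\[
M^*_{1,\qq_0}(g)(x,t)=\sup_{\qq\ni(x,t)}|\qq|^{1/N}\dashint_\qq|g|,
\]
which maps $L(\gamma,q)$ boundedly into $L\!\bigl(\tfrac{N\gamma}{N-\gamma},q\bigr)$. That is exactly the operator the paper puts on the $f$-side of the good-$\lambda$ inequality (Lemma~\ref{lem_37}), and the extra factor $|\qq|^{1/N}\sim R$ is absorbed by the comparison estimate, which reads simply $\int_{\qq_R\cap\Om_T}|Du-Dw|\leq cR\int_{\qq_R\cap\Om_T}|f|$ (Lemmas~\ref{comparison} and~\ref{comparison_boundary}), with no $\lambda^{1-\theta}$ weight and no $\theta$-power on the $f$-average. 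Replacing $M^*_1$ by a Hardy--Littlewood operator, as in the Acerbi--Mingione sub-natural-growth setting, is appropriate above the duality exponent but not in the regime $\gamma\leq\frac{2N}{N+2}$ considered here.

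Two smaller remarks. The "intrinsically rescaled" cylinders you mention are not used here: the equation has linear growth ($p=2$), parabolic scaling is homogeneous, and the argument runs on ordinary cylinders $\qq_\rho$; intrinsic geometry \`a la DiBenedetto enters only for the evolutionary $p$-Laplacian with $p\neq 2$, which the paper explicitly defers to forthcoming work. Also, the stability of the constants in the approximation step does not hinge on Theorem~\ref{gehring_boundary} per se; it is the lower semicontinuity of the Lorentz quasinorm under a.e.\ convergence, combined with $|f_k|\leq|f|$ for the truncated data, that makes the limit passage in $\|Du_k\|_{L(N\gamma/(N-\gamma),q)}\leq c\|f_k\|_{L(\gamma,q)}$ harmless.
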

Of course, by choosing $q=\infty$, we can also deduce regularity results on the Marcinkiewicz scale.

\vspace{1mm}
\begin{rem} After stating Theorem \ref{thm_du}, we want to make a more precise comparison between our results and 
the basic results in~\cite{BDGO97}, where the authors treat the case of Lebesgue spaces; i.~\!e., they take $q=\gamma$  
in the statement above thereby obtaining the corresponding conclusion. The results in Theorem \ref{thm_du} is not a direct extension of the ones in~\cite{BDGO97}, as in this paper no regularity assumptions are considered on $\Omega$. On the other hand, the results in the present paper are global but they are obtained by covering arguments via estimates that are {\em local in nature} as it is easy to see that, using the methods developed here, one can obtain local estimates up to the boundary and not only in the interior.
This corresponds to replace, in Theorem \ref{thm_du}, $\Omega_T$ by cylinders of the type $(B(x_0,R)\cap \Omega) \times (-T, 0)$, where 
$x_0 \in \partial \Omega$. This is a valued consequence of the techniques approached here
and the necessity of assuming the considered boundary capacity conditions 
 stems exactly from this aspect of the involved techniques. Now, whereas it is likely that some Lorentz space estimates can  be also obtained under milder assumption on $\Omega$, it is worth mentioning  that the methods here can  be 
also used to get global estimates in Morrey and Lorentz-Morrey spaces (as those obtained in the interior case in \cite{DP12b,Min10}). 
\end{rem}

The proof of Theorem~\ref{thm_du} 
relies on the fact that the integrability of the spatial gradient of the solutions to problem~\eqref{problema} is linked to a suitable choice of potential operators. In this sense, one of the key-points will consist into obtaining a decay estimate that involves the level sets of the parabolic Hardy-Littlewood maximal operator $M^*_0$ of $|Du|$ in term of those of another maximal operator $M^*_1$ of the datum $f$, up to a correction term which is negligible when considering the higher regularity. Indeed, we will prove an estimate of the following type
\begin{equation}\label{eq_level}
\displaystyle
\big|\big\{ M^*_0(|Du|) \geq S\lambda\big\}\big| \, \lesssim \, \frac{1}{S^{2\chi_1}}\big|\big\{ M_0^*(|Du|)\geq \lambda\big\}\big| 
+ \big|\big\{[M^*_1(|f|)]\geq \lambda\big\}\big|,
\end{equation}
for every $\lambda$ suitably large, and in which $S>\!>1$ is a constant to be chosen, and the exponent $\chi_1>1$ is related to the higher integrability theory.

The estimate in~\eqref{eq_level}, whose precise version is given by forthcoming formula \eqref{tesi.prop.fin}, is fairly general and it will be relevant to deducing the Lorentz estimates stated in our theorems, also including the borderline case.
In order to obtain such level sets estimate, we apply a parabolic covering lemma of Calder\'on-Zygmund type (see Lemma~\ref{lem_covering} below) together with the higher integrability theory which we will extend up to the boundary (see Theorem~\ref{gehring_boundary} below).
Therefore, we will work  on basic estimates of the solutions~$u$ to~\eqref{problema} in comparison to the solutions~$w$ to the corresponding homogeneous problem (see Section~\ref{sec_comparison}), by exploiting very recent contributions in the parabolic framework given in~\cite{KM12c}.
Clearly, the situation is complicated by the necessity to work in a global setting.

\vspace{1.5mm}

Analogously, we will show how to establish a modified version of the level sets estimate in~\eqref{eq_level} by replacing $Du$ by $u$ there. This will permit us to recover higher regularity estimates in the Lorentz scale for the solutions $u$ themselves, as stated in the following
\begin{thm}\label{thm_u}
Let $\Om$ be such that $\R^n\setminus\Om$ is a thick domain with constants $c_0$ and $\rho_0$ and let $f\in L(\gamma,q)(\Omega_T)$ with 
\[
1< \gamma < \frac N2\qquad\text{and}\qquad 0<q\leq\infty.
\]
Then the solution $u\in L^1(-T,0;W^{1,1}_0(\Om))$ to \eqref{problema} satisfies
$$
u \in L\!\left({\frac{N\gamma}{N-2\gamma}},q\right) \, \text{in}\ \Omega_T.
$$
Moreover, the following estimate holds
\begin{equation*}
\displaystyle
\| u \|_{L\left({\frac{N\gamma}{N-2\gamma}},q\right)(\Omega_T)}
\, \leq  \, 
 c\, \| f \|_{L(\gamma,q)(\Omega_T)},
\end{equation*}
where $c$ is a constant depending only on $n, q, L, \nu, \gamma, c_0, \rho_0, \text{\rm diam}(\Om)$.
\end{thm}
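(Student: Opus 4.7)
The proof is designed to parallel the one of Theorem~\ref{thm_du}, with the principal modification consisting in establishing a level-set inequality controlling $u$ itself rather than its spatial gradient. The target is a decay estimate of the form
\begin{equation*}
\bigl|\bigl\{ M^*_0(|u|) \geq S\lambda \bigr\}\bigr|
\;\lesssim\; \frac{1}{S^{2\chi}}\bigl|\bigl\{ M^*_0(|u|) \geq \lambda \bigr\}\bigr|
 + \bigl|\bigl\{ M^*_2(|f|) \geq \lambda \bigr\}\bigr|,
\end{equation*}
for $\lambda$ sufficiently large and $S\gg 1$, where $\chi>1$ is the higher integrability exponent provided by Theorem~\ref{gehring_boundary} and $M^*_2$ is the parabolic fractional maximal operator of order two. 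The key conceptual point is the shift from $M^*_1$ in \eqref{eq_level} to $M^*_2$ here: since $u$ corresponds morally to a Riesz potential of order~$2$ applied to the right-hand side, rather than one of order~$1$ as is the case for $Du$, the shift is natural and produces exactly the target integrability exponent $N\gamma/(N-2\gamma)$ through the mapping property $M^*_2 : L(\gamma,q)\to L(N\gamma/(N-2\gamma),q)$, which holds in the range $1<\gamma<N/2$.

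The scheme would proceed as follows. First, on each parabolic cylinder $Q$ (interior, or touching the lateral boundary, where the thick-complement hypothesis on $\R^n\setminus\Om$ allows the same comparison machinery developed in Section~\ref{sec_comparison}), one compares $u$ with the solution $w$ of the corresponding homogeneous Cauchy-Dirichlet problem, deriving a suitable scale-invariant estimate
$$
\dashint_Q |u - w|\,\xt \;\lesssim\; r_Q^{2}\dashint_{\widetilde Q} |f|\,\xt,
$$
whose extra factor $r_Q$ with respect to the gradient comparison comes from a Poincaré-type application exploiting the zero boundary condition transferred through capacity density. Simultaneously, one uses that $w$ enjoys, via De~Giorgi/Moser bounds and Theorem~\ref{gehring_boundary} adapted to $w$ itself, a reverse Hölder inequality of the form $\sup_{Q/2}|w|\lesssim (\dashint_Q |w|^{2\chi})^{1/(2\chi)}$.

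Second, apply the Calderón–Zygmund covering Lemma~\ref{lem_covering} at level $\lambda$ to $M^*_0(|u|)$, obtaining a disjoint family of stopping-time cylinders; on each of them, the decomposition $u = w + (u-w)$ splits the super-level set of $M^*_0(|u|)$ into a contribution from $w$ that decays like $S^{-2\chi}$ via the reverse Hölder estimate above, and a contribution from $u-w$ absorbed into the level set of $M^*_2(|f|)$ thanks to the comparison estimate. Summing over cylinders and re-expressing the bound in terms of $M^*_0(|u|)$ yields the displayed level-set inequality. The final step is the by now standard Fubini-type passage from level-set estimates to Lorentz bounds, using the characterization
$$
\|g\|_{L(p,q)}^q \sim \int_0^\infty \bigl(\lambda\,|\{|g|>\lambda\}|^{1/p}\bigr)^q\,\frac{d\lambda}{\lambda},
$$
together with the $L(\gamma,q)\to L(N\gamma/(N-2\gamma),q)$ boundedness of $M^*_2$, which produces the quantitative bound in the statement.

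The main obstacle is the derivation of the comparison estimate with the correct extra factor of $r_Q$ producing the ``second order'' scaling, and verifying that boundary cylinders do not spoil it: this is precisely the point where the thick-complement assumption enters decisively, through the capacitary Poincaré inequality that converts the vanishing trace of $u$ into $L^\chi$-averaged control of $u$ by $r_Q\,|Du|$ on cylinders hitting $\partial_{\rm par}\Om_T$. Once these ingredients are in place, the remainder of the proof runs in parallel to Section~\ref{sec_proofs}.
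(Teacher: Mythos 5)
Your scheme is essentially the paper's: replace $M^*_1$ by $M^*_2$, compare $u$ with the homogeneous solution $w$ using the boundary comparison lemma, invoke higher integrability of $w$ up to the boundary, run the covering lemma, and convert level-set decay to Lorentz norms. Two points are worth flagging, one a genuine gap and one a misattribution.

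The gap: you invoke ``the higher integrability exponent $\chi>1$ provided by Theorem~\ref{gehring_boundary}'' as if a fixed $\chi>1$ sufficed, exactly as in the gradient case. But for Theorem~\ref{thm_du} one needs $2\chi_1 > N\gamma/(N-\gamma)$, which is automatic since $\gamma\le 2N/(N+2)$ forces $N\gamma/(N-\gamma)\le 2$. Here one needs $2\chi > N\gamma/(N-2\gamma)$, and the right-hand side blows up as $\gamma\uparrow N/2$. So no fixed Gehring-type exponent can work across the whole stated range; one must be able to choose $\chi$ arbitrarily large. The paper secures this by proving a boundary $L^\infty$-estimate (the De~Giorgi iteration in the lemma after Theorem~\ref{gehring_boundary}), which yields \eqref{stima_gehring_boundary_u} for every $\chi>1$, in contrast to the gradient estimate \eqref{stima_gehring_boundary} which only holds for a specific $\chi_1$. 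You do mention De~Giorgi/Moser bounds, but without tying them to the exponent constraint the reader cannot see why the gradient-case argument would otherwise break down. This step should be made explicit.

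The misattribution: the ``extra factor $r_Q$'' in the $u-w$ comparison, i.e.\ $\dashint_Q|u-w| \lesssim r_Q^2\dashint_{\widetilde Q}|f|$, does not come from a capacitary Poincar\'e inequality. It is already part of the comparison lemmata (Lemmas~\ref{comparison} and~\ref{comparison_boundary}, which control $R^{-1}|u-w|$ and $|Du-Dw|$ simultaneously by $R\dashint|f|$) and follows from testing the difference equation with $u-w$, which vanishes on $\partial_{\rm par}(\qq_R\cap\Om_T)$ by construction. The thick-complement hypothesis and the capacity Poincar\'e/Sobolev inequalities (Lemmas~\ref{cap_Poinc} and~\ref{par_Sobolev}) are instead what make the \emph{higher integrability of $w$} work on boundary cylinders, since $w$ agrees with $u$ (not $0$) on part of the parabolic boundary. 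Keeping these two roles separate would tighten the argument.
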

It is worth pointing out that, although in the elliptic case the regularity of $u$ can be recovered by plainly combining the regularity of the gradient $Du$ with the classic Sobolev embeddings, here we need to work in a separate way, by means of sharp estimates also involving different maximal operators in~\eqref{eq_level}, and higher regularity results up to the boundary for the solutions $u$. 

\vspace{3mm}

Finally, we recall that all the results we obtained in the present paper hold for the very weak solutions given by the approximation method described in forthcoming Section~\ref{sec_solvability}.
It would be interesting to understand whether these results can be extended to some other notion of solutions. In this respect, a positive answer can be given when dealing with notions of solutions to measure data problems holding uniqueness in case of integrable data, as in case of the renormalized solutions in~\cite{Pet08} and~\cite{PPP11}; 
see~\cite{BM97} for a first definition of renormalized solutions in this framework, and also~\cite{DPP03}. 

\begin{rem} In the present paper we are dealing with the case of operators with linear growth while in other papers, 
as for instance \cite{Min10}, level sets techniques 
 are used to approach similar regularity results for $p$-Laplacian type operators, that is for  equations of the type
$
-{\rm div}\, (|Du|^{p-2}Du)=f.
$
When considering the parabolic analog
$$
u_t-{\rm div}\, (|Du|^{p-2}Du)=f,
$$
the approach of the present paper is not sufficient if used alone, and has to be modified and extended using the concept of intrinsic geometry first pioneered by DiBenedetto to overcome the lack of homogeneity of the evolutionary $p$-Laplacian operator when $p\neq2$ (see for instance~\cite{AM07} for a discussion of this concept in the relevant context). This is the object of forthcoming investigations (\cite{Bar13}). 
\end{rem}

The paper is organized as follows. 

In Section~\ref{sec_preliminaries}, we fix notation; we give some details on the structure of the problem and we briefly recall the definitions and a few basic properties of the spaces and the operators we deal with, also discussing the capacity density condition and some properties of Sobolev functions in $p$-thick domains; we recall a parabolic covering lemma of Calder\'on-Zygmund type. 

In Section~\ref{sec_comparison}, we state and prove  
global higher regularity results for the solutions to the homogeneous analog of problem~\eqref{problema}, and other preliminary results. 
 
In Section~\ref{sec_proofs}, we prove the results stated in Theorem~\ref{thm_du}, 
and the Lorentz space estimates for the solutions~$u$.

\vspace{2mm}
\section{Preliminaries}\label{sec_preliminaries}

Throughout the paper we follow the usual convention of denoting by $c$ a general positive constant {\em always greater or equal than one} that may vary from line to line. Relevant dependencies will be emphasized by using parentheses; special constants will be denoted by $c_0$, $c_1$, $c_2$,....

As usual, we denote by 
$$
B_R(x_0)=B(x_0;R):=\{x\in \mathbb{R}^n : |x-x_0|<R\}
$$
the open ball centered in $x_0\in \mathbb{R}^n$ with radius $R>0$. When not important and clear from the context, we shall use the shorter notation $B_R=B_R(x_0)$. Let $(x_0,t_0)\in \mathbb{R}^{n+1}$, we denote by
\begin{equation}\label{bcyl}
\mathcal{Q}_R(x_0,t_0)=\mathcal{Q}(x_0,t_0;R):=B(x_0;R)\times (t_0-R^2,t_0)
\end{equation}
the open parabolic cylinder with height $R^2$ and having a ball $B_R$ as horizontal slice.

Finally, we mean $\partial_{{\rm par}}\Om_T=\partial_{{\rm lat}}\Om_T\cup \Omega_0$, where $\partial_{{\rm lat}}\Om_T:=\partial \Omega\times (-T,0)$ is the lateral boundary and $\Omega_0=\Omega\times\{-T\} $ is the initial boundary. 

\subsection{Solvability of the problem}\label{sec_solvability}
A measurable function $u$ is a very weak solution to~\eqref{problema} if $u\in L^1(-T,0; W^{1,1}_0(\Om))$ and
\begin{equation}\label{def_distrib}
-\int_{\Om_T} u\phi_t\,{\rm d}x\,{\rm d}t+\int_{\Om_T}\langle a(x,t,Du),D\phi\rangle\,{\rm d}x\,{\rm d}t=\int_{\Om_T}f\phi\,{\rm d}x\,{\rm d}t,
\end{equation}
for any $\phi\in C_0^\infty(\Om_T)$.
Also, while the lateral boundary condition can be formulated by prescribing the belonging of $u$ to $L^1(-T,0; W^{1,1}_0(\Om))$, the initial boundary condition is understood in the $L^1$-sense, that is
\begin{equation}\label{initial_value}
\frac 1 h \int_{-T}^{-T+h}\int_\Om|u(x,t)|\,{\rm d}x\,{\rm d}t\to 0\quad \text{as }h\to 0. 
\end{equation}
As usual in the parabolic setting, one can provide a convenient ``slice-wise" reformulation of equality \eqref{def_distrib} by mean of Steklov average. Indeed, for $h>0$ and $t\in [-T,0)$, we can define
$$
u_h(x,t):=
\begin{cases}
\displaystyle
\frac 1 h\int_{t}^{t+h}u(x,\tilde{t})\,{\rm d}\tilde{t} & \text{if} \ t+h\leq 0 \\[1.5ex]
0 & \text{if} \ t+h> 0
\end{cases}
$$
and the following equality
\begin{equation}\label{steklov}
\int_\Omega\big[(u_h)_t \phi+\langle[a(x,t,Du)]_h,D\phi\rangle\big]\,{\rm d}x=\int_\Om f_h\phi\,{\rm d}x 
\end{equation}
holds for any $\phi\in W_0^{1,2}(\Omega)$ and for a.~\!e.~$t\in(-T,0)$. This formulation allows to use the solution $u$, which enjoys only very weak regularity properties in time, as a test function in \eqref{steklov}, and therefore {\em throughout all the paper} we shall proceed formally in this sense, referring for instance to \cite{BH12} for the details in similar situations.
\vspace{1mm}

The existence of such solution is obtained using an approximation method; this means that we shall consider a special class of solutions called SOLA (Solutions Obtained by Limits of Approximations). In particular, one considers a sequence of bounded function $\{f_k\}\subset L^\infty(\Om_T)$ such that $f_k\to f$ in~$L^1(\Omega_T)$ as $k\to\infty$. Then, by standard monotonicity arguments, for each $k$ fixed, there exists a unique solution
$$
u_k\in C^0([-T,0]; L^2(\Omega))\cap L^2(-T,0; W^{1,2}_0(\Omega))
$$
and
\begin{equation*}
(u_k)_t\in C^{0}(-T,0;W^{-1,2}(\Om))
\end{equation*}
to the Cauchy-Dirichlet problem
\begin{equation}\label{problema_approx}
\begin{cases}
\displaystyle
(u_k)_t-\text{\rm div}\, a(x,t, Du_k) = f_k & \text{in} \ \Om_T \\
u_k = 0 & \text{on} \ \partial_{{\rm par}}\Om_T.
\end{cases}
\end{equation}
Finally, thanks to the arguments in \cite{BG89,BDGO97}, we can pass to the limit in the problem above and to prove the existence of a function $u\in L^1(-T,0; W^{1,1}_0(\Om))$ which solves \eqref{problema} in the distributional sense. For the rest of this paper we understand by $\{u_k\}$ the sequence obtained by solving \eqref{problema_approx} with
\begin{equation}\label{def_troncaf}
f_k(x,t):=\max\big\{-k,\ \min\{f(x,t),\, k\}\,\big\},\quad k\in\mathbb{N}. 
\end{equation}
Notice that, as stated in Remark \ref{remro}, this choice is not restrictive, since in this class of solutions uniqueness holds, in the sense that considering another approximating sequence $\{\bar f_k\}$ would lead to the same limit $u$.

\subsection{Relevant parabolic function spaces}\label{sec_spaces}

Let $\Om_T$ be the space time cylinder $\Om\times(-T,0)$, with $\Om$ being a bounded open set in $\R^n$, $n\geq 2$ and $T>0$.

Fix $q\in (0,\infty)$. A measurable map $g:\Om_T\to\R^k$ belongs to the {\it Lorentz space} $L(\gamma,q)(\Om_T)$ with $1\leq \gamma < \infty$ if and only if
\begin{equation}\label{def_lorentz}
\displaystyle
\|g\|^q_{L(\gamma, q)(\Om_T)} \, := \, q \int_0^\infty \left ( \lambda^\gamma \big| \big\{
(x,t)\in \Om_T : |g(x,t)|>\lambda \big\}\big| \right)^{\frac{q}{\gamma}} \frac{{\rm d}\lambda}{\lambda}
\, < \, +\infty.
\end{equation}
In the case $q=\infty$, the Lorentz space~$L(\gamma,\infty)$ with $\gamma\in [1,\infty)$ is the so-called {\it Marcinkiewicz space}
 and it is usually denoted by $\mathcal{M}^\gamma(\Om_T)$. A measurable map $g:\Om_T\to\R^k$ belongs to $\mathcal{M}^\gamma(\Om_T)$ if and only if
\begin{equation}\label{def_marci}
\|g\|^\gamma_{\mathcal{M}^\gamma(\Om_T)}  \equiv  \|g\|^\gamma_{L(\gamma,\infty)(\Om_T)} 
\, := \, \sup_{\lambda>0} \lambda^\gamma \big|\big\{ (x,t)\in \Om_T : |g(x,t)|>\lambda \big\}\big| \,
< \, +\infty.
\end{equation}

Note that by Fubini's Theorem one can see that, for any $\gamma\in [1,\infty)$, $L(\gamma,\gamma)(\Om_T)$ is nothing than the classic Lebesgue space $L^\gamma(\Om_T)$.
Also,  by Fatou's Lemma, one can see that the functionals~$\|\cdot\|_{L(\gamma,q)(\Om_T)}$ defined in~\eqref{def_lorentz}-\eqref{def_marci} are lower semi-continuous with respect to the a.~\!e.~convergence; \label{lcs}
see \cite[Section 3.2]{BH12b}.

\vspace{1mm}

Moreover, the spaces defined above enjoy H\"older type inequalities. In particular, below we state a standard inequality for the Marcinkiewicz spaces $\Mz^\gamma(\Om)$ in the form we will need it in the following of the paper.
\begin{lemma}
Let~$E\subseteq\R^{n+1}$ be a measurable set and let $g\in\Mz^\gamma(E)$ with~$\gamma>1$. Then, for any~$q\in[1,\gamma)$, $g\in L^q(E)$ and
$$
\displaystyle
\|g\|_{L^q(E)} \leq \left(\frac{\gamma}{\gamma-q}\right)^{\frac{1}{q}} |E|^{\frac{1}{q}-\frac{1}{\gamma}}\|g\|_{\Mz^\gamma(E)}.
$$
\end{lemma}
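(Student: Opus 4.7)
The plan is to use the layer cake (Cavalieri) identity
\[
\|g\|_{L^q(E)}^q \,=\, q\int_0^\infty \lambda^{q-1}\,\big|\{(x,t)\in E : |g(x,t)|>\lambda\}\big|\,{\rm d}\lambda,
\]
and then exploit the two obvious bounds on the distribution function: the trivial one $|\{|g|>\lambda\}|\leq |E|$ (good for small $\lambda$) and the Marcinkiewicz one $|\{|g|>\lambda\}|\leq \lambda^{-\gamma}\|g\|_{\Mz^\gamma(E)}^\gamma$ (good for large $\lambda$). Splitting the integral at a free threshold $\lambda_0>0$ gives
\[
\|g\|_{L^q(E)}^q \,\leq\, |E|\,\lambda_0^q \,+\, q\,\|g\|_{\Mz^\gamma(E)}^\gamma \int_{\lambda_0}^\infty \lambda^{q-1-\gamma}\,{\rm d}\lambda \,=\, |E|\,\lambda_0^q \,+\, \frac{q}{\gamma-q}\,\lambda_0^{q-\gamma}\,\|g\|_{\Mz^\gamma(E)}^\gamma,
\]
where the integral converges precisely because $q<\gamma$; this is the only place where the hypothesis $q<\gamma$ is used, and it simultaneously produces the blow-up factor $\gamma/(\gamma-q)$ appearing in the statement.

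Next I would optimize over $\lambda_0$. Setting the derivative with respect to $\lambda_0$ to zero leads to the balanced choice
\[
\lambda_0 \,:=\, |E|^{-1/\gamma}\,\|g\|_{\Mz^\gamma(E)},
\]
which, substituted back, makes both contributions equal to $|E|^{1-q/\gamma}\|g\|_{\Mz^\gamma(E)}^q$ up to the constants already visible above. Summing them yields the clean bound
\[
\|g\|_{L^q(E)}^q \,\leq\, \Bigl(1+\frac{q}{\gamma-q}\Bigr)\,|E|^{1-q/\gamma}\,\|g\|_{\Mz^\gamma(E)}^q \,=\, \frac{\gamma}{\gamma-q}\,|E|^{1-q/\gamma}\,\|g\|_{\Mz^\gamma(E)}^q,
\]
and extracting the $q$-th root gives exactly the claimed inequality, including the sharp constant $(\gamma/(\gamma-q))^{1/q}$ and the exponent $1/q-1/\gamma$ on $|E|$.

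There is essentially no obstacle here: the argument is a textbook one-parameter optimization, and the only subtle point is that the $\lambda_0$-integral for the tail diverges when $q=\gamma$, which is why the bound degenerates as $q\uparrow\gamma$. I would just note in passing that the borderline case $q=1<\gamma$ is allowed, and that if $|E|=+\infty$ the statement is vacuous (or equivalently one recovers the well-known fact that $\Mz^\gamma$ does not embed into $L^q$ on unbounded sets).
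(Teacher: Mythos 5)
Your proof is correct. The paper states this lemma without proof, citing it as a standard Marcinkiewicz-to-Lebesgue embedding; your layer-cake argument with the split at the optimally chosen threshold $\lambda_0=|E|^{-1/\gamma}\|g\|_{\Mz^\gamma(E)}$ is exactly the canonical derivation the authors are implicitly invoking, and it reproduces the stated constant $(\gamma/(\gamma-q))^{1/q}$ and exponent $1/q-1/\gamma$ precisely.
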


\vspace{1mm}

For details and results about the theory of Lorentz spaces, we refer the interested reader to~\cite{Ada75,Gra04}.

\subsection{Parabolic maximal operators}
For any measurable function~$g$, the restricted {\it fractional maximal operator}~$M^*_{\alpha,\qq_0}$, with~$\alpha\in [0,N]$, relative to a  parabolic cylinder $\qq_0$,  
is defined by
\begin{equation*}
\displaystyle
M^*_{\alpha, \qq_0}(g)(x,t) \, := \, 
\sup_{\qq\subseteq \qq_0, \ (x,t)\in \qq} |\qq|^{\frac{\alpha}{N}} \dashint_\qq |g(y,\tau)|\, {\rm d}y\, {\rm d}\tau,
\end{equation*}
where the cylinders $\qq$ have sides parallel to those of $\qq_0$.

The boundedness of maximal operators in Marcinkiewicz spaces is classical (see, for instance, \cite{Gra04}); i.~\!e., for any $g\in L^\gamma(\qq_0)$,
\begin{equation}\label{eq_8star}
\big|\big\{ (x,t) \in \qq_0 : M^*_{0,\qq_0}(g)(x,t)\geq \lambda \big\}\big|
\, \leq \, 
\frac{\bar{c}}{\lambda^\gamma}\int_{\qq_0}|g|^\gamma\,{\rm d}x\,{\rm d}t
\end{equation}
holds for every $\lambda>0$ and $\gamma\geq 1$; the constant $c$ depending only on $n$ and $\gamma$.

More in general, a standard embedding result for the maximal functions in Lorentz spaces does hold, as given by the following theorem, whose proof follows by the Marcinkiewicz theorem together with standard sublinear interpolation. 
\begin{thm}{\rm (\cite[Theorem 7]{Min10})}. 
Let $\alpha$ in $[0,N)$, and~$\gamma>1$ be such that~$\alpha\gamma<N$. Let ${\mathcal Q}\subset\R^{n+1}$ be a parabolic cylinder  and $q\in(0,\infty]$.
Then for every measurable function~$g$ in $L(\gamma,q)(\mathcal{Q})$ it holds
\begin{equation*}
\| M^*_{\alpha, {\mathcal Q}}(g)\|_{L\left(\frac{N\gamma}{N-\alpha\gamma}, q\right)({\mathcal Q})} \leq c  \|g\|_{L(\gamma,q)({\mathcal Q})},
\end{equation*} 
where $c$ is a constant depending only on $\alpha$, $\gamma$, $n$ and $q$.
\end{thm}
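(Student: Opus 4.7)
The plan is to reduce the statement to the Marcinkiewicz interpolation theorem for sublinear operators acting on Lorentz spaces, after having established suitable weak-type endpoint inequalities for $M^*_{\alpha,\mathcal{Q}}$. The starting point is the observation that \eqref{eq_8star} is exactly the weak-type bound at the level $\alpha=0$; I will first extend it to the fractional scale, and then interpolate.

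\textbf{Step 1 (Endpoint weak-type inequalities).} Fix any exponent $\tilde{\gamma}>1$ with $\alpha\tilde{\gamma}<N$, and set $\tilde{r}:=N\tilde{\gamma}/(N-\alpha\tilde{\gamma})$. For $g\in L^{\tilde\gamma}(\mathcal{Q})$ and $\lambda>0$, at every point $(x,t)$ of the level set $E_\lambda:=\{M^*_{\alpha,\mathcal{Q}}(g)>\lambda\}$ there is a parabolic cylinder $\mathcal{Q}_{(x,t)}\subseteq\mathcal{Q}$ containing $(x,t)$ such that
\[
\lambda\,|\mathcal{Q}_{(x,t)}|^{1-\alpha/N}<\int_{\mathcal{Q}_{(x,t)}}|g|\,\dx\dt\le|\mathcal{Q}_{(x,t)}|^{1-1/\tilde\gamma}\|g\|_{L^{\tilde\gamma}(\mathcal{Q})}
\]
by H\"older's inequality, which gives $|\mathcal{Q}_{(x,t)}|\le(\lambda^{-1}\|g\|_{L^{\tilde\gamma}(\mathcal{Q})})^{\tilde r}$. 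Applying a Vitali-type covering lemma for parabolic cylinders to extract from $\{\mathcal{Q}_{(x,t)}\}_{(x,t)\in E_\lambda}$ a pairwise disjoint subfamily whose dilates cover $E_\lambda$, and summing the inequality $\lambda|\mathcal{Q}_{(x,t)}|\le|\mathcal{Q}_{(x,t)}|^{\alpha/N}\int_{\mathcal{Q}_{(x,t)}}|g|\dx\dt$ with the previous cylinder-by-cylinder bound, one arrives at
\[
|E_\lambda|\,\le\,\left(\frac{c\,\|g\|_{L^{\tilde\gamma}(\mathcal{Q})}}{\lambda}\right)^{\!\tilde r}
\]
with $c=c(n,\alpha,\tilde\gamma)$. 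In other words, $M^*_{\alpha,\mathcal{Q}}$ is of weak type $(\tilde\gamma,\tilde r)$, i.e. it maps $L^{\tilde\gamma}(\mathcal{Q})$ boundedly into the Marcinkiewicz space $\mathcal{M}^{\tilde r}(\mathcal{Q})=L(\tilde r,\infty)(\mathcal{Q})$.

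\textbf{Step 2 (Marcinkiewicz interpolation in the Lorentz scale).} Given the target exponent $\gamma$, pick $\gamma_0,\gamma_1$ with $1<\gamma_0<\gamma<\gamma_1$ and $\alpha\gamma_i<N$, and set $r_i:=N\gamma_i/(N-\alpha\gamma_i)$. By Step~1, $M^*_{\alpha,\mathcal{Q}}$ is a sublinear operator of weak type $(\gamma_0,r_0)$ and $(\gamma_1,r_1)$. Choose $\theta\in(0,1)$ so that $1/\gamma=(1-\theta)/\gamma_0+\theta/\gamma_1$; a direct computation yields
\[
\frac{1-\theta}{r_0}+\frac{\theta}{r_1}\,=\,(1-\theta)\Big(\frac{1}{\gamma_0}-\frac{\alpha}{N}\Big)+\theta\Big(\frac{1}{\gamma_1}-\frac{\alpha}{N}\Big)\,=\,\frac{1}{\gamma}-\frac{\alpha}{N}\,=\,\frac{N-\alpha\gamma}{N\gamma},
\]
so that the interpolated target exponent equals $N\gamma/(N-\alpha\gamma)$. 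The Marcinkiewicz interpolation theorem for sublinear operators in its Lorentz-space form (see e.g. Bennett--Sharpley or Grafakos) then yields
\[
\|M^*_{\alpha,\mathcal{Q}}(g)\|_{L(N\gamma/(N-\alpha\gamma),\,q)(\mathcal{Q})}\,\le\,c\,\|g\|_{L(\gamma,q)(\mathcal{Q})}
\]
for every $0<q\le\infty$, with $c=c(n,\alpha,\gamma,q)$, as desired.

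\textbf{Main obstacle.} The nontrivial ingredient is the endpoint weak-type inequality of Step~1, in which the Vitali covering has to be performed for \emph{parabolic} cylinders rather than Euclidean balls, and the correct geometric normalization $|\mathcal{Q}|^{\alpha/N}$ must be matched with the homogeneous parabolic dimension $N=n+2$; once this is in place, Step~2 is a completely standard application of the abstract interpolation theorem and requires no further analytic input.
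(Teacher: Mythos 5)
The paper itself does not prove this theorem; it cites \cite[Theorem 7]{Min10} and remarks only that the ``proof follows by the Marcinkiewicz theorem together with standard sublinear interpolation.'' Your proposal implements precisely that route (endpoint weak-type bounds followed by off-diagonal Marcinkiewicz interpolation in the Lorentz scale), so the overall strategy is in agreement with the paper's indication, and Step~2 is correct: the exponent arithmetic works out, one always has $r_0\neq r_1$ since $\gamma\mapsto N\gamma/(N-\alpha\gamma)$ is strictly increasing, and the Lorentz-space form of the Marcinkiewicz theorem for sublinear operators applies.

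There is, however, a genuine gap in the covering argument of Step~1 as written. You propose to combine the pointwise inequality $\lambda|\mathcal{Q}_j|\le|\mathcal{Q}_j|^{\alpha/N}\int_{\mathcal{Q}_j}|g|\,\dx\dt$ with the single-cylinder bound $|\mathcal{Q}_j|\le(\lambda^{-1}\|g\|_{L^{\tilde\gamma}})^{\tilde r}$ and then sum over the disjoint family. Doing this literally yields
$$
\lambda\sum_j|\mathcal{Q}_j|\;\le\;(\lambda^{-1}\|g\|_{L^{\tilde\gamma}})^{\tilde r\alpha/N}\int_{\mathcal{Q}}|g|\,\dx\dt,
$$
and after H\"older on $\int_{\mathcal Q}|g|$ the resulting power of $\lambda$ is $-(1+\tilde r\alpha/N)=-\tilde r/\tilde\gamma$, not $-\tilde r$, together with a spurious factor $|\mathcal{Q}|^{1-1/\tilde\gamma}$; so this does not give the asserted weak-type $(\tilde\gamma,\tilde r)$ bound. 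The standard fix is to apply H\"older \emph{inside} each cylinder first, obtaining $\lambda^{\tilde\gamma}|\mathcal{Q}_j|^{\tilde\gamma/\tilde r}\le\int_{\mathcal{Q}_j}|g|^{\tilde\gamma}\,\dx\dt$, then sum and invoke the subadditivity $(\sum_j a_j)^{\tilde\gamma/\tilde r}\le\sum_j a_j^{\tilde\gamma/\tilde r}$ valid because $\tilde\gamma/\tilde r<1$; equivalently, use the pointwise majorization $M^*_{\alpha,\mathcal{Q}}(g)\le\bigl(M^*_{\alpha\tilde\gamma,\mathcal{Q}}(|g|^{\tilde\gamma})\bigr)^{1/\tilde\gamma}$ together with the weak-$(1,N/(N-\alpha\tilde\gamma))$ bound for $M^*_{\alpha\tilde\gamma}$. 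With this repair, the endpoint estimates are correct and the interpolation step then yields the theorem.
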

\vspace{1mm}

Also, in the borderline case $\gamma=1$, we have the following
\begin{thm}{\rm (\cite[Lemma 4.11 and Theorem 4.12]{BH12b})}.\label{thm_marci}
 Let $\alpha \in [0,N)$, ${\mathcal Q}\subset\R^{n+1}$ be a parabolic cylinder and consider the concentric parabolic cylinder $\sigma{\mathcal Q}$ scaled by a factor $\sigma>1$. Then there exists a constant $c =c(n,\alpha,\sigma)$ such that, for any measurable function $g$ in $\sigma{\mathcal Q}$, it holds
$$
\| M^*_{\alpha,{\mathcal Q}}(g)\|_{L^{\frac{N}{N-\alpha}}({\mathcal Q})} \, \leq \, c\, |{\mathcal Q}|^{1-\frac{\alpha}{N}} \|g\|^{\frac{\alpha}{N}}_{L^{1}(\sigma{\mathcal Q})} \|g\|^{1-\frac{\alpha}{N}}_{L\log L({\mathcal Q})}.
$$
\end{thm}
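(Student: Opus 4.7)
The plan is to derive, via an elementary splitting argument, a pointwise interpolation of $M^*_{\alpha,{\mathcal Q}}(g)$ between the raw $L^1$-mass of $g$ and the parabolic Hardy--Littlewood maximal function $M^*_{0,{\mathcal Q}}(g)$, and then to invoke the classical Stein $L\log L\to L^1$ endpoint for the latter, adapted to parabolic cylinders.

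For the \emph{pointwise splitting}, fix $(x,t)\in{\mathcal Q}$ and a threshold $s\in(0,|{\mathcal Q}|]$. For each parabolic sub-cylinder $P\subseteq{\mathcal Q}$ containing $(x,t)$, distinguish two regimes. If $|P|\geq s$, rewrite $|P|^{\alpha/N}\dashint_P|g| = |P|^{\alpha/N-1}\int_P|g|$; since $\alpha<N$ the exponent $\alpha/N-1$ is negative, so this is bounded by $s^{\alpha/N-1}\|g\|_{L^1(\sigma{\mathcal Q})}$. If instead $|P|<s$, then trivially $|P|^{\alpha/N}\dashint_P|g|\leq s^{\alpha/N}M^*_{0,{\mathcal Q}}(g)(x,t)$. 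Taking the supremum over all admissible $P$ yields
\begin{equation*}
M^*_{\alpha,{\mathcal Q}}(g)(x,t)\,\leq\, s^{\alpha/N-1}\,\|g\|_{L^1(\sigma{\mathcal Q})}\,+\,s^{\alpha/N}\,M^*_{0,{\mathcal Q}}(g)(x,t).
\end{equation*}

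Balancing the two terms through the choice $s:=\|g\|_{L^1(\sigma{\mathcal Q})}/M^*_{0,{\mathcal Q}}(g)(x,t)$, truncated at $|{\mathcal Q}|$ in the degenerate cases (where the trivial bound $M^*_{\alpha,{\mathcal Q}}(g)(x,t)\leq|{\mathcal Q}|^{\alpha/N-1}\|g\|_{L^1({\mathcal Q})}$ already suffices), one arrives at the pointwise interpolation
\begin{equation*}
M^*_{\alpha,{\mathcal Q}}(g)(x,t)\,\leq\, c\,\|g\|_{L^1(\sigma{\mathcal Q})}^{\alpha/N}\,\bigl[M^*_{0,{\mathcal Q}}(g)(x,t)\bigr]^{1-\alpha/N}.
\end{equation*}
Raising to the power $N/(N-\alpha)$, integrating over ${\mathcal Q}$, and using the identity $(1-\alpha/N)\cdot N/(N-\alpha)=1$, which converts the exponent on the maximal function to $1$, this gives
\begin{equation*}
\|M^*_{\alpha,{\mathcal Q}}(g)\|_{L^{N/(N-\alpha)}({\mathcal Q})}\,\leq\, c\,\|g\|_{L^1(\sigma{\mathcal Q})}^{\alpha/N}\,\|M^*_{0,{\mathcal Q}}(g)\|_{L^1({\mathcal Q})}^{1-\alpha/N}.
\end{equation*}
The final ingredient is the parabolic version of the Hardy--Littlewood--Stein endpoint $\|M^*_{0,{\mathcal Q}}(g)\|_{L^1({\mathcal Q})}\leq c\,|{\mathcal Q}|\,\|g\|_{L\log L({\mathcal Q})}$ (with the Luxemburg $L\log L$-norm, so that the dimensional scaling matches); substituting this bound produces the stated inequality, with constant depending on $n,\alpha,\sigma$.

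The main technical obstacle is the Stein endpoint bound for the restricted parabolic maximal operator: one must verify that the classical proof, based on the Calder\'on--Zygmund stopping-time decomposition and the weak-$(1,1)$ maximal inequality, carries over with a uniform constant to parabolic cylinders, and it is precisely in this step that the enlargement factor $\sigma$ naturally enters. The splitting and the $s$-optimization are elementary; the only delicate points are the borderline values of $M^*_{0,{\mathcal Q}}(g)$ in the optimization and the precise normalization of the Luxemburg norm, which must be tracked carefully so that the exact $|{\mathcal Q}|^{1-\alpha/N}$ factor in the final estimate is produced.
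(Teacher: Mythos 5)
Your argument is correct, and it follows the canonical Hedberg-type route that almost certainly underlies the cited Lemma 4.11 and Theorem 4.12 of [BH12b] (the paper itself only cites this statement and gives no proof). The pointwise splitting, the optimization in $s$, and the final substitution of the Stein $L\log L\to L^1$ endpoint for $M^*_{0,\mathcal{Q}}$ all go through. Two small points of precision. First, the truncation of $s$ at $|\mathcal{Q}|$ is not actually needed: the inequality $M^*_{\alpha,\mathcal{Q}}(g)(x,t)\leq s^{\alpha/N-1}\|g\|_{L^1(\mathcal{Q})}+s^{\alpha/N}M^*_{0,\mathcal{Q}}(g)(x,t)$ holds for every $s>0$, because for $s>|\mathcal{Q}|$ the first regime is vacuous and the second already covers all admissible sub-cylinders, so the unconstrained minimizer may always be inserted. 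Second, your remark that the enlargement factor $\sigma$ ``naturally enters'' the Stein endpoint step is misplaced: the restricted operator $M^*_{0,\mathcal{Q}}$ only sees $g|_{\mathcal{Q}}$, the Calder\'on--Zygmund stopping decomposition is performed entirely inside $\mathcal{Q}$, and $\|g\|_{L^1(\sigma\mathcal{Q})}\geq\|g\|_{L^1(\mathcal{Q})}$ merely makes the stated right-hand side larger; the $\sigma$-dependence of the constant is thus superfluous in your argument. The only non-elementary ingredient is indeed the parabolic Stein endpoint $\|M^*_{0,\mathcal{Q}}(g)\|_{L^1(\mathcal{Q})}\leq c\,|\mathcal{Q}|\,\|g\|_{L\log L(\mathcal{Q})}$, which you identify but do not prove; to complete the argument you should either cite it precisely or check that the cube/ball proof transfers to the parabolic differentiation basis, and you must fix the normalization of the Luxemburg norm to be the averaged one, $\dashint_{\mathcal{Q}}\Phi(|g|/\lambda)\,\dx\dt\leq1$, since otherwise the explicit factor $|\mathcal{Q}|^{1-\alpha/N}$ in the statement does not come out with the correct scaling.
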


\subsection{Capacity density condition and thick domains}
\label{sec_thick}
In the following we recall the definition and some  properties of the $p$-thick domains.
We need first to recall the definition of $p$-capacity of sets.
\begin{defn}
Let $K\subseteq \Omega$ be a compact set. For any $p\geq 1$ the {\it variational $p$-capacity of $K$ on~$\Om$} is defined by
$$
{\rm cap}_p(K,\Om):=\inf\left\{\int_\Omega |D \varphi
|^p\,{\rm d}x : \varphi\in C^{\infty}_0(\Om), \varphi \geq \mathds{1}_K\right\}.
$$
\end{defn}
As customary, the definition above can be extended to generic open sets $A\subseteq\Om$ by taking the supremum over the capacities of the compact sets contained in $A$. 
In the case of  balls, the definition simplifies considerably and we have the following explicit formula for $B_\rho$ on $B_{2\rho}$
\begin{equation*}
{\rm cap}_p(\overline{{B}_\rho}, B_{2\rho})=c\,\rho^{n-p},
\end{equation*}
where $c$ is a positive constant depending only on $n$ and $p$.
For further details about the variational $p$-capacity, we refer for instance to Chapter 2 in~\cite{HKM93}.
\begin{defn}\label{def_p_thick}
A  set  $E\subset \mathbb{R}^n$  is {\it uniformly $p$-thick} (equivalently, satisfies the {\it $p$-capacity uniform thickness} condition) if there exist constants $c_0$ and $\rho_0>0$ such that
\begin{equation}\label{ere}
 {\rm cap}_p(E\cap \overline{{B}_\rho(x)}, B_{2\rho}(x))\geq c_0\,{\rm cap}_p(\overline{{B}_\rho(x)}, B_{2\rho}(x))=c(n,p,c_0)\,\rho^{n-p},
\end{equation}
for any $x\in E$ and for any $0<\rho<\rho_0$.
\end{defn}
For the sake of simplicity, from now on we shall use the short notation {\em thick} to denote a {\em $2$-thick} domain.

\vspace{1mm}

As stated in the introduction, one can prove that domains satisfying Definition~\ref{def_p_thick} include those with Lipschitz boundaries or even those that satisfy a uniform corkscrew condition. 
Furthermore, the estimate in~\eqref{ere} remains  valid for balls centered {\it outside} a uniformly $p$-thick domain (and near the boundary), as stated in the next lemma; see \cite[Lemma 3.8]{Par09}. This will be useful since we are going to deal with bounded sets {\em whose complementary} is 
 thick.
\begin{lemma}\label{boundary_p_thick}
Let $\Omega$ be a bounded domain such that $\mathbb{R}^n\setminus \Omega$ is uniformly $p$-thick with constants $c_0$ and $\rho_0$, and let $y\in\Om$ be such that $B_{\rho/6}(y)\setminus \Omega\neq \emptyset$. Then there exists a constant ${c_1}={c_1}(n,p,\rho_0,c_0)$ such that
\begin{equation*}
 {\rm cap}_p(\overline{{B}_{\rho/4}(y)}\setminus \Omega, B_{\rho/2}(y))\geq {c_1}\,{\rm cap}_p(\overline{{B}_{\rho/4}(y)}, B_{\rho/2}(y))=c\,\rho^{n-p}.
\end{equation*}
\end{lemma}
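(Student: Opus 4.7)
The plan is to exploit the uniform $p$-thickness of $E := \R^n\setminus\Om$ at a point $x_0$ that lies in the prescribed hole $B_{\rho/6}(y)\setminus\Om$, and then to transfer the resulting capacity estimate from balls centred at $x_0$ to balls centred at $y$ via a standard cutoff/Poincar\'e trick.

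First I would pick $x_0\in B_{\rho/6}(y)\setminus\Om$, which is nonempty by hypothesis; then $x_0\in E$. Applying Definition~\ref{def_p_thick} at $x_0$ with radius $r:=\rho/12$ (which requires $\rho<12\rho_0$; for larger $\rho$ the conclusion follows by decreasing $c_1$, using $\mathrm{diam}(\Om)$) gives
$$
\mathrm{cap}_p\!\left(E\cap\overline{B_{\rho/12}(x_0)},\,B_{\rho/6}(x_0)\right)\geq c(n,p,c_0)\,\rho^{n-p}.
$$
An elementary triangle inequality yields $\overline{B_{\rho/12}(x_0)}\subset B_{\rho/4}(y)$ and $B_{\rho/6}(x_0)\subset B_{\rho/2}(y)$, so in particular $E\cap\overline{B_{\rho/12}(x_0)}\subseteq K$, where $K:=\overline{B_{\rho/4}(y)}\setminus\Om$.

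The key step is now to pass from this bound to the analogous bound with the enlarged ambient ball $B_{\rho/2}(y)$. Note that this is the main obstacle: easy monotonicity of capacity in the outer set goes the \emph{wrong} direction, since enlarging the ambient set decreases the capacity. To invert this, I would take an arbitrary test function $\varphi\in C_0^\infty(B_{\rho/2}(y))$ with $\varphi\geq\mathds{1}_{K}$, and multiply it by a cutoff $\eta\in C_0^\infty(B_{\rho/6}(x_0))$ satisfying $\eta\equiv 1$ on $\overline{B_{\rho/12}(x_0)}$ and $|D\eta|\leq c/\rho$. Then $\eta\varphi$ is admissible for $\mathrm{cap}_p(E\cap\overline{B_{\rho/12}(x_0)},B_{\rho/6}(x_0))$, and the Leibniz rule combined with the Poincar\'e inequality for $\varphi\in W_0^{1,p}(B_{\rho/2}(y))$ gives
$$
\int|D(\eta\varphi)|^p\dx\leq c\!\left(\rho^{-p}\!\!\int_{B_{\rho/2}(y)}\!\!|\varphi|^p\dx+\!\!\int_{B_{\rho/2}(y)}\!\!|D\varphi|^p\dx\right)\leq c\!\!\int_{B_{\rho/2}(y)}\!\!|D\varphi|^p\dx.
$$
Taking the infimum over $\varphi$ yields
$$
\mathrm{cap}_p\!\left(E\cap\overline{B_{\rho/12}(x_0)},\,B_{\rho/6}(x_0)\right)\leq c\,\mathrm{cap}_p\bigl(K,\,B_{\rho/2}(y)\bigr).
$$

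Chaining this with the thickness bound and the explicit identity $\mathrm{cap}_p(\overline{B_{\rho/4}(y)},B_{\rho/2}(y))=c(n,p)\rho^{n-p}$ recalled right after Definition~\ref{def_p_thick} gives the lemma with $c_1=c_1(n,p,\rho_0,c_0)$. The cutoff--Poincar\'e argument in the second display is the sole technical content: everything else is either the thickness hypothesis applied at a well-chosen point or a routine geometric containment of balls.
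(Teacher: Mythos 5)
The paper does not reproduce a proof of this lemma; it simply refers the reader to \cite[Lemma 3.8]{Par09}. Your cutoff-plus-Poincar\'e argument is correct and is the standard route: thickness is applied at a point $x_0$ in the hole, and you correctly note that the naive monotonicity of capacity in the outer set goes the wrong way, so the transfer must be done by hand. Taking a test function $\varphi$ for the pair $(K,B_{\rho/2}(y))$, multiplying it by a cutoff $\eta$ localized near $x_0$, and absorbing the extra $\rho^{-p}\int|\varphi|^p$ term via Poincar\'e's inequality in $W^{1,p}_0(B_{\rho/2}(y))$ is precisely how this is done in the literature. One detail worth making explicit: applying Definition~\ref{def_p_thick} at radius $\rho/12$ requires $\rho<12\rho_0$. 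In the paper's uses this is harmless (e.g.\ Theorem~\ref{gehring_boundary} is invoked in Lemma~\ref{lem_37} with $R=12\rho<\rho_0$, so inside its proof one always has $\rho<\rho_0$); but your fallback for larger $\rho$ would in general force $c_1$ to depend also on ${\rm diam}(\Omega)$, which the statement of the lemma does not allow, so it is cleaner to read the lemma with the implicit restriction $\rho\lesssim\rho_0$ that the applications respect.
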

Also, a uniform $p$-thick domain enjoys a self-improving property, as stated in the following theorem due to Lewis \cite{Lew88} (see also Section 8 of \cite{Mik96} for a complete survey on the boundary regularity).

\begin{thm}\label{p_thick_dom_prop}
Let $\Omega$ be a bounded domain such that $\R^n\setminus\Omega$ is uniformly $p$-thick with constants $c_0$ and $\rho_0$, and $1<p\leq n$. Then there exists $q=q(n,p,c_0) \in (1,p)$ such that  $\R^n\setminus \Omega$ is uniformly $q$-thick with constant $c_2=c_2(n,p,c_0)$ and $\rho_0$. Moreover, $q$ can be chosen near $p$ so that $q\in(np/(n+p),p)$. Thus, we have
\begin{equation*}
{\rm cap}_q(\overline{{B}_{\rho/4}(y)}\setminus \Omega, B_{\rho/2}(y))\geq c_2\,{\rm cap}_q(\overline{{B}_{\rho/4}(y)}, B_{\rho/2}(y))\geq c(n,p,c_0)\rho^{n-q}
\end{equation*}
for $y\in\Om$ be such that $B_{\rho/6}(y)\setminus \Omega\neq \emptyset$.
\end{thm}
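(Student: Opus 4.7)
The plan is to prove this self-improvement via the Gehring-type machinery of higher integrability for the gradient of the $p$-capacitary potential. Fix $x\in E:=\R^n\setminus\Omega$ and $\rho\in(0,\rho_0)$, and set $K:=E\cap\overline{B_\rho(x)}$, $B:=B_{2\rho}(x)$. Let $u\in W^{1,p}_0(B)$ be the $p$-capacitary potential of $K$ in $B$, i.e.\ the (unique) minimizer of $v\mapsto\int_B|Dv|^p\,dx$ over $v\in W^{1,p}_0(B)$ with $v\geq\mathds{1}_K$ quasi-everywhere; then $0\leq u\leq 1$, $u=1$ quasi-everywhere on $K$, $u$ is $p$-harmonic in $B\setminus K$, and $\int_B|Du|^p\,dx={\rm cap}_p(K,B)\geq c_0\,\rho^{n-p}$ by the $p$-thickness hypothesis.

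The key analytic input is a reverse H\"older inequality for $|Du|$. On any ball $B_{2r}(y)\subset B$, the Caccioppoli inequality for $p$-harmonic functions (obtained by testing the $p$-Laplace equation with $\eta^p(u-\bar u)$ for a standard cutoff $\eta$) combined with the Sobolev--Poincar\'e inequality at the Sobolev conjugate exponent $s:=np/(n+p)$ yields
\[
\Bigl(\dashint_{B_r(y)}|Du|^p\,dz\Bigr)^{1/p}\leq C\Bigl(\dashint_{B_{2r}(y)}|Du|^{s}\,dz\Bigr)^{1/s};
\]
near $K$ (or near $E$) the plain Poincar\'e step is replaced by its capacitary version, which is available because $u=1$ on a set of uniformly positive $p$-capacity in the enlarged ball, thanks to the thickness assumption and Lemma~\ref{boundary_p_thick}. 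A Gehring-type self-improvement then produces some $\varepsilon=\varepsilon(n,p,c_0)>0$ such that the reverse H\"older inequality persists with $p$ replaced by any exponent in a short interval $[p-\varepsilon,p+\varepsilon]$, so that in particular $|Du|\in L^{p+\varepsilon}_{\rm loc}$ with quantitative estimates. Setting $q:=p-\varepsilon$ (and shrinking $\varepsilon$ so that $q>np/(n+p)$), this improved control allows one to propagate the lower bound ${\rm cap}_p(K,B)\geq c_0\rho^{n-p}$ down to the $q$-level, yielding ${\rm cap}_q(K,B)\geq c_2\,\rho^{n-q}$ with $c_2=c_2(n,p,c_0)$. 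The boundary-ball variant in the statement is obtained by exactly the same mechanism.

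The hard part is the very last step, namely translating the gradient higher integrability back into a lower bound for the $q$-capacity. The trivial H\"older comparison between $p$- and $q$-capacities gives for free only the \emph{upper} bound ${\rm cap}_q(K,B)\leq C\rho^{n-q}$ (since $u$ itself is admissible for the $q$-capacity of $K$), so the nontrivial content is the matching \emph{lower} bound. Extracting it is genuinely nonlinear: it cannot be obtained by naive interpolation and hinges on the fact that, once the reverse H\"older inequality self-improves to exponents slightly below $p$, the $q$-capacitary potential of $K$ must dissipate a comparable minimal energy. The specific threshold $q>np/(n+p)$ appearing in the statement is precisely the Sobolev exponent governing the right-hand side of the reverse H\"older inequality above.
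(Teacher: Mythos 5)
The paper does not prove this statement; it is imported verbatim from Lewis (\cite{Lew88}), with \cite{Mik96} cited for an exposition, so the relevant comparison is between your sketch and Lewis's argument rather than anything in the present text. Your high-level strategy (study the $p$-capacitary potential $u$ of $K=E\cap\overline{B_\rho}$ in $B_{2\rho}$, exploit a reverse H\"older inequality for $|Du|$) is the right flavor, but there are two problems. First, there is a circularity in setting up the reverse H\"older near $K$: Gehring's lemma needs the right-hand side at an exponent $s<p$ (indeed $s=np/(n+p)$), and the corresponding capacitary Sobolev--Poincar\'e at exponent $s$ for a function vanishing on $K$ demands a lower bound on ${\rm cap}_s(K\cap\overline{B_r},B_{2r})$, i.e.\ $s$-thickness with $s<p$ — precisely the conclusion you are trying to derive. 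The $p$-capacitary Poincar\'e of Lemma~\ref{cap_Poinc} that you invoke keeps both sides at the level $p$ and therefore cannot lower the exponent in Caccioppoli $+$ Poincar\'e, so it does not by itself produce a reverse H\"older inequality.

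Second, and as you yourself candidly flag, the decisive step is absent. Higher integrability of $Du$ and interpolation do yield $\int_{B_{2\rho}}|Du|^q\geq c\,\rho^{n-q}$ for $q$ slightly below $p$, but $u$ is merely an \emph{admissible} competitor for ${\rm cap}_q(K,B_{2\rho})$, so this inequality bounds the $q$-capacity from \emph{above}, not below. Lower-bounding ${\rm cap}_q(K,B_{2\rho})$ requires controlling the $q$-capacitary potential $v_q$, whose reverse H\"older near $K$ runs into exactly the circularity above. Your closing sentence (``the $q$-capacitary potential must dissipate a comparable minimal energy'') asserts the conclusion without an argument; it is in fact the entire content of Lewis's theorem. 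So the proposal is an honest, correctly-flavored sketch, but it has a genuine gap at the only step that matters, and as written the reverse H\"older machinery cannot be set up without presupposing sub-$p$ thickness.
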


\vspace{1mm}
\begin{rem}
On the other hand a $p$-thick domain is $r$-thick for every $r\geq p$. Indeed, for $x\in E$, $\rho<\rho_0$ and $\varphi\in C^\infty_0(B_{2\rho}(x))$, $\varphi\geq \mathds 1_{E\cap \overline{{B}_\rho(x)}}$, by the H\"older inequality, we  have
\[
\int_{B_{2\rho}}|D\varphi|^p\dx\leq c(n,p,r)\,\biggl(\int_{B_{2\rho}}|D\varphi|^r\dx\biggl)^{\frac pr}\rho^{n\left(1-\frac pr\right)}.
\]
Taking the infimum for such $\varphi$ we get
\[
c(n,p,c_0)\,\rho^{n-p}\leq {\rm cap}_p(E\cap \overline{{B}_\rho}, B_{2\rho})\leq c\,\rho^{n\left(1-\frac pr\right)} \big[{\rm cap}_r(E\cap \overline{{B}_\rho}, B_{2\rho})\big]^{\frac pr}
\]
and $r$-thickness follows by taking the $r/p$-power. \end{rem}

\vspace{1.5mm}

We conclude this section by recalling some capacity Sobolev type inequalities; see for example \cite[Lemma 3.3]{KK94} or \cite[Lemma 8.11 and Remark 8.14]{Mik96}.
\begin{lemma}\label{Soboell}
Let $w\in W^{1,p}(B_{2\rho})$ and $K_\rho(w)=\{x\in \overline{{B}_\rho} : w(x)=0\}$. Define $\kappa=n/(n-p)$ if $p<n$ and $\kappa=2$ if $p\geq n$. Then there exists a positive constant $c=c(n,p)$ such that
$$
\left(\dashint_{B_{2\rho}}|w|^{\kappa p}\,{\rm d}x \right)^{\frac{1}{\kappa p}}\leq \left(\frac{c}{{\rm cap}_p(K_\rho(w), B_{2\rho})}\int_{B_{2\rho}}|Dw|^p\,{\rm d}x\right)^{\frac{1}{p}}.
$$
\end{lemma}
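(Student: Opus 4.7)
The plan is to combine the classical Sobolev--Poincar\'e inequality on $B_{2\rho}$ with a capacity-weighted estimate for the mean value of $w$. Denote $\bar w := \dashint_{B_{2\rho}} w\, {\rm d}x$. The Sobolev--Poincar\'e inequality gives
\[
\left(\dashint_{B_{2\rho}}|w-\bar w|^{\kappa p}\,{\rm d}x\right)^{\frac{1}{\kappa p}} \leq c\,\rho\left(\dashint_{B_{2\rho}}|Dw|^p\,{\rm d}x\right)^{\frac{1}{p}},
\]
with $c=c(n,p)$; in the subcritical case $p<n$ this is the standard embedding with $\kappa=n/(n-p)$, while for $p\geq n$ the exponent $\kappa p = 2p$ is covered by $W^{1,p}\hookrightarrow L^{2p}(B_{2\rho})$ (via the Morrey embedding when $p>n$ or via the Trudinger/Moser embedding when $p=n$), always with a dimensional constant and the same power of $\rho$ on the right-hand side.

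The heart of the matter is to bound $|\bar w|$ in terms of $\operatorname{cap}_p(K_\rho(w),B_{2\rho})$. Assuming $\bar w\neq 0$ (otherwise the left-hand side is already controlled by the above Sobolev--Poincar\'e step and the monotonicity argument below), I would fix a cutoff $\eta\in C_0^\infty(B_{2\rho})$ with $\eta\equiv 1$ on $\overline{B_\rho}$ and $|D\eta|\leq c/\rho$, and introduce the admissible test function
\[
\varphi := \eta\left(1-\frac{w}{\bar w}\right).
\]
On $K_\rho(w)$ one has $w=0$ and $\eta=1$, hence $\varphi=1\geq \mathds{1}_{K_\rho(w)}$; a standard truncation/smoothing makes $\varphi$ admissible for the variational $p$-capacity. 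Differentiating and using $|D\eta|\leq c/\rho$ together with the Poincar\'e inequality
\[
\int_{B_{2\rho}}|w-\bar w|^p\,{\rm d}x \leq c\,\rho^p\int_{B_{2\rho}}|Dw|^p\,{\rm d}x,
\]
one arrives at
\[
|\bar w|^p\,\operatorname{cap}_p(K_\rho(w),B_{2\rho}) \leq \int_{B_{2\rho}}|\bar w\,D\varphi|^p\,{\rm d}x \leq c\int_{B_{2\rho}}|Dw|^p\,{\rm d}x.
\]

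Finally I would combine the two estimates via $|w|\leq |w-\bar w|+|\bar w|$. To harmonize the right-hand sides, note that $K_\rho(w)\subseteq\overline{B_\rho}$, so by monotonicity of the capacity and the explicit formula $\operatorname{cap}_p(\overline{B_\rho},B_{2\rho})=c\,\rho^{n-p}$ one has
\[
\rho^{1-\frac{n}{p}} \leq \left(\frac{c}{\operatorname{cap}_p(K_\rho(w),B_{2\rho})}\right)^{\frac{1}{p}},
\]
which allows the $\rho$-factor produced by the Sobolev--Poincar\'e contribution, once averaged over $|B_{2\rho}|$, to be absorbed into the capacity factor; this yields the claimed inequality.

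The only genuine step is the capacity bound on $|\bar w|$ via the test function $\varphi$; both the Sobolev--Poincar\'e inequality and the algebraic rearrangement in the last paragraph are routine. The treatment of $p\geq n$ is the mildest technical nuisance, requiring a case distinction in the embedding used, but not in the capacity argument.
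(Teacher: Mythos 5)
Your proof is correct, and the paper does not give its own argument for this lemma (it simply cites \cite[Lemma 3.3]{KK94} and \cite[Lemma 8.11]{Mik96}). Your route is the standard one for such capacitary Sobolev inequalities: Sobolev--Poincar\'e to control the oscillation $w-\bar w$, a capacity-weighted estimate for the mean $\bar w$ via the test function $\varphi=\eta(1-w/\bar w)$, and absorption of the $\rho^{1-n/p}$ factor using $\operatorname{cap}_p(K_\rho(w),B_{2\rho})\le\operatorname{cap}_p(\overline{B_\rho},B_{2\rho})=c\,\rho^{n-p}$. Two technical points worth flagging, neither fatal: (i) $\varphi$ need not lie in $C^\infty_0$, so admissibility requires passing to the equivalent definition of $\operatorname{cap}_p$ with $W^{1,p}_0$ test functions $\ge 1$ quasi-everywhere on $K$, together with the observation that truncation to $[0,1]$ does not increase the $p$-energy; and (ii) the statement $w=0$ on $K_\rho(w)$ must be read for the quasicontinuous representative, so that $\varphi\ge 1$ holds q.e.\ on $K_\rho(w)$ rather than pointwise a.e.\ — both are routine, and you gesture at the first.
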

The previous inequality also holds in the case stated below, with $p$-powers on both sides; that is, we have the following Poincar\'e type inequality
\begin{lemma}\label{cap_Poinc}
Let $w\in W^{1,p}(B_{2\rho})$ and $K_\rho(w)$ as before. Then there exists a positive constant  $c\equiv c(n,p)$ such that
\begin{equation}\label{poci}
\dashint_{B_{2\rho}}|w|^p\,{\rm d}x \leq \frac{c}{{\rm cap}_p(K_\rho(w), B_{2\rho})}\int_{B_{2\rho}}|Dw|^p\,{\rm d}x. 
\end{equation}
\end{lemma}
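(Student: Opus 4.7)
The result is a Maz'ya-type capacitary Poincaré inequality, and the natural route is to split $w$ into its mean value and its oscillation, estimate the oscillation by the classical Poincaré inequality on $B_{2\rho}$, and bound the mean by testing $\mathrm{cap}_p(K_\rho(w), B_{2\rho})$ with a suitable cutoff of $(1 - w/\bar w)$. First, replacing $w$ by $|w|$ (which lies in $W^{1,p}(B_{2\rho})$ with $|D|w|| = |Dw|$ a.e.\ and leaves $K_\rho$ unchanged) I may assume $w \geq 0$, set $\bar w := \dashint_{B_{2\rho}} w \dx$, and note that $\bar w = 0$ forces $w \equiv 0$ and trivializes the claim; so assume $\bar w > 0$. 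The standard Poincaré inequality on the ball gives
\[
\dashint_{B_{2\rho}} |w|^p \dx \leq c\,\bar w^p + c\,\rho^p \dashint_{B_{2\rho}} |Dw|^p \dx,
\]
reducing the problem to bounding each of the two terms on the right in the desired capacitary form.

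For the mean, let $\eta \in C_0^\infty(B_{2\rho})$ be a standard cutoff with $\eta \equiv 1$ on $B_\rho$ and $|D\eta| \leq c/\rho$, and set
\[
\varphi := \eta \cdot \Bigl(1 - \frac{w}{\bar w}\Bigr)_+ \in W_0^{1,p}(B_{2\rho}).
\]
Since $w \geq 0$ and $\eta \leq 1$ one has $0 \leq \varphi \leq 1$, while $\varphi \equiv 1$ on $K_\rho(w) \subseteq \overline{B_\rho} \cap \{w = 0\}$, so $\varphi$ is admissible for $\mathrm{cap}_p(K_\rho(w), B_{2\rho})$. Expanding $|D\varphi|^p$ by the product rule, the first summand is controlled by $|D\eta|^p \leq c\rho^{-p}$ together with the Poincaré inequality $\int_{B_{2\rho}} |w - \bar w|^p \dx \leq c\rho^p \int_{B_{2\rho}} |Dw|^p \dx$, and the second by $|Dw|/\bar w$; this yields
\[
\mathrm{cap}_p(K_\rho(w), B_{2\rho}) \leq \int_{B_{2\rho}} |D\varphi|^p \dx \leq \frac{c}{\bar w^p} \int_{B_{2\rho}} |Dw|^p \dx,
\]
which rearranges to a bound on $\bar w^p$ of exactly the type required by \eqref{poci}.

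To absorb the remaining $c\rho^p \dashint_{B_{2\rho}} |Dw|^p \dx$ into the same capacitary form, I invoke the monotonicity of capacity and the explicit formula recalled earlier in this section: $\mathrm{cap}_p(K_\rho(w), B_{2\rho}) \leq \mathrm{cap}_p(\overline{B_\rho}, B_{2\rho}) = c\,\rho^{n-p}$, so that
\[
\frac{\rho^p}{|B_{2\rho}|} \simeq \rho^{p-n} \leq \frac{c}{\mathrm{cap}_p(K_\rho(w), B_{2\rho})};
\]
combining the three estimates gives \eqref{poci}. The only mild technical point I anticipate is the admissibility of $\varphi$: one must work with a $p$-quasicontinuous representative of $w$ so that $\{w = 0\}$ is defined pointwise up to sets of $p$-capacity zero, after which the argument collapses to a one-line combination of the classical Poincaré inequality with the definition of capacity.
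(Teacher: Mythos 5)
Your proof is correct, and it is self-contained; the paper, by contrast, gives no argument at all for this lemma: it cites \cite{KK94} and \cite{Mik96} for the capacitary Sobolev inequality of Lemma~\ref{Soboell}, and then asserts Lemma~\ref{cap_Poinc} as the ``$p$-power on both sides'' variant. That variant does follow immediately from Lemma~\ref{Soboell} --- raise the displayed inequality there to the $p$-th power and apply Jensen's inequality to the average $\dashint_{B_{2\rho}}|w|^p\dx \leq \bigl(\dashint_{B_{2\rho}}|w|^{\kappa p}\dx\bigr)^{1/\kappa}$, using $\kappa>1$ --- so the paper's implicit route is a one-line reduction to the (cited) Sobolev version. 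Your route is genuinely different: rather than importing the full Sobolev-capacity inequality, you split $w$ into its mean plus oscillation, control the oscillation by the classical Poincar\'e inequality on $B_{2\rho}$, and control $\bar{w}^p$ by testing $\operatorname{cap}_p(K_\rho(w),B_{2\rho})$ with $\eta(1-w/\bar{w})_+$; then you absorb the residual $\rho^p\dashint|Dw|^p$ term using the monotonicity bound $\operatorname{cap}_p(K_\rho(w),B_{2\rho})\leq\operatorname{cap}_p(\overline{B_\rho},B_{2\rho})\simeq\rho^{n-p}$. What your approach buys is elementariness and independence from the Sobolev embedding: you use only the plain $L^p$-Poincar\'e inequality plus the definition of capacity, which makes the argument more portable (it works verbatim for any $p\geq 1$ with no case split on $\kappa$). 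The paper's reduction buys brevity, given that Lemma~\ref{Soboell} has already been accepted as a black box. Your anticipation of the one genuine technicality --- that admissibility of $\eta(1-w/\bar{w})_+$ should be read through a $p$-quasicontinuous representative of $w$, together with the standard relaxation of $\operatorname{cap}_p$ from $C_0^\infty$ to $W_0^{1,p}$ competitors --- is exactly right and is the only point that would need to be made precise in a fully written version. One cosmetic remark: to make $\varphi\geq\mathds{1}_{K_\rho(w)}$ literal, choose $\eta\equiv1$ on, say, $\overline{B_{3\rho/2}}$ so there is no issue on $\partial B_\rho$; this changes nothing else.
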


\vspace{1.5mm}

Now a parabolic Sobolev inequality; its proof requires only slight changes with respect to the proof of Lemma~$3.17$ in~\cite{Par09}. Essentially, it is enough to apply Sobolev's inequality Lemma \ref{Soboell} slicewise to the continuation to zero of $w$ in $\mathcal Q_{\rho/2}\setminus\Omega_T$. Note moreover that the exponent $q$  appearing therein is the same present in Theorem \ref{p_thick_dom_prop}, for $p=2$, and that here it is unavoidable such a self-improving property. 
\begin{lemma}\label{par_Sobolev}
Let $\Omega\subset\R^n$ be a bounded domain such that $\R^n\setminus\Om$ is uniformly thick with constants $c_0$ and $\rho_0$, and let $\mathcal Q_{\rho/2}:=B_{\rho/2}(x_1)\times(t_1-(\rho/2)^2,t_1)$ be a parabolic cylinder such that $B_{\rho/6}(x_1)\setminus\Om\neq\emptyset$. 
Let moreover $w$ be a function such that
$$
w\in L^2(\mathcal Q_{\rho/2}\cap\Omega_T),\quad Dw\in L^2(\mathcal Q_{\rho/2}\cap\Omega_T),
$$
$$
w\in L^\infty(t_1-(\rho/2)^2,t_1; L^2(B_{\rho/2}(x_1)))
$$
 and $w\equiv0$ on $\mathcal Q_{\rho/2}\cap\partial_{\rm lat}\Omega_T$. Then there exist an exponent $q\equiv q(n,c_0)<2$ and a constant $c=c(n,c_0,\rho_0)$ such that
\begin{multline}
 \frac{1}{|\mathcal Q_{\rho/2}|}\int_{\mathcal Q_{\rho/2}\cap\Omega_T}|w|^2\dx\dt\leq c\,\biggl( \frac{1}{|\mathcal Q_{\rho/2}|}\int_{\mathcal Q_{\rho/2}\cap\Omega_T}|Dw|^q\dx\dt\biggr)^{\frac{2n}{q(n+2)}}\\
 \times\biggl(\sup_{\tau\in(t_1-(\rho/2)^2,t_1)\cap(-T,0)}\int_{B_{\rho/2}\cap\Om}|w(\cdot,\tau)|^2\dx\biggr)^{\frac{2}{n+2}}
\end{multline}
and
\begin{multline}
\int_{\mathcal Q_{\rho/2}\cap\Omega_T}|w|^{2(1+\frac 2n)}\dx\dt\leq c\, \int_{\mathcal Q_{\rho/2}\cap\Omega_T}|Dw|^2\dx\dt\\
 \times\biggl(\sup_{\tau\in(t_1-(\rho/2)^2,t_1)\cap(-T,0)}\int_{B_{\rho/2}\cap\Om}|w(\cdot,\tau)|^2\dx\biggr)^{\frac 2n}.
\end{multline}
\end{lemma}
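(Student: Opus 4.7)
\medskip

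\textbf{Proof sketch.} The plan is to first upgrade the slicewise \emph{elliptic} Sobolev inequality given by Lemma~\ref{Soboell} into a parabolic one by integrating in time and interpolating with the $L^\infty_t L^2_x$-norm. The key observation is that, since $w$ vanishes on the lateral boundary $\partial_{\rm lat}\Omega_T\cap\mathcal Q_{\rho/2}$, we may extend $w(\cdot,\tau)$ by zero on $B_{\rho/2}\setminus\Omega$ for a.e.\ $\tau\in(t_1-(\rho/2)^2,t_1)\cap(-T,0)$ without altering its Sobolev regularity, obtaining a function defined on the whole ball $B_{\rho/2}(x_1)$.

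Since by hypothesis $B_{\rho/6}(x_1)\setminus\Omega\neq\emptyset$, Lemma~\ref{boundary_p_thick} applied with $p=2$ (and, later, with the self-improved exponent $q$) together with the trivial inclusion
\[
\overline{B_{\rho/4}(x_1)}\setminus\Omega\ \subseteq\ K_{\rho/4}\big(w(\cdot,\tau)\big)
\]
yields a uniform lower bound on the capacity of the zero set of each time slice of $w$, namely $\text{cap}_q(K_{\rho/4}(w(\cdot,\tau)),B_{\rho/2})\ge c\,\rho^{n-q}$. Theorem~\ref{p_thick_dom_prop} provides the crucial exponent $q\in(2n/(n+2),2)$ we need: this range is sharp because it ensures that $nq/(n-q)\ge 2$, so that the embedding exponent produced by Lemma~\ref{Soboell} sits above $L^2$ and can be interpolated with the sup-in-time $L^2$ norm.

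To obtain the first inequality I would then apply Lemma~\ref{Soboell} slicewise with $p=q$, producing the fixed-time Sobolev embedding $\|w(\cdot,\tau)\|_{L^{nq/(n-q)}(B_{\rho/2})}\le c\,\|Dw(\cdot,\tau)\|_{L^q(B_{\rho/2})}$. A Gagliardo--Nirenberg-type Hölder interpolation between $L^{nq/(n-q)}$ and $L^2$ in the space variable, with interpolation parameter chosen so that the exponent on $Dw$ equals $q$, gives after raising to a suitable power, integrating in $\tau$, and estimating $L^2_x$ by its supremum in time,
\[
\int_{\mathcal Q_{\rho/2}\cap\Omega_T}|w|^{q(n+2)/n}\dx\dt\ \le\ c\,\Bigl(\sup_\tau\int_{B_{\rho/2}\cap\Omega}|w(\cdot,\tau)|^2\dx\Bigr)^{q/n}\int_{\mathcal Q_{\rho/2}\cap\Omega_T}|Dw|^q\dx\dt.
\]
Since $q(n+2)/n\ge 2$, a final Hölder inequality in space-time (with exponent $q(n+2)/(2n)$) converts the left-hand side into the desired $L^2$-average and produces the factor $|\mathcal Q_{\rho/2}|$ on both sides. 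A short check of exponents shows that the sup-term is raised to $2/(n+2)$ and the gradient term to $2n/[q(n+2)]$, exactly as claimed.

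The second inequality follows by the same scheme, but now working directly with the \emph{original} $2$-thickness assumption (no self-improvement needed): Lemma~\ref{Soboell} with $p=2$ yields the slicewise embedding into $L^{2n/(n-2)}$ for $n>2$ (and $L^4$ for $n=2$, via the corresponding $\kappa=2$ case), and Hölder interpolation $|w|^{2(1+2/n)}=|w|^2\cdot|w|^{4/n}$ with exponents $n/(n-2)$ and $n/2$ gives, after integration in $\tau$ and passage to the $L^\infty_t L^2_x$ norm on the lower-order factor, the stated estimate. The main technical obstacle is bookkeeping the interpolation exponents (and verifying, via the choice $q>2n/(n+2)$, that the capacity Sobolev embedding lands in the correct range to produce an $L^2$-norm on the left in the first inequality); the only substantive dimensional caveat is the case $n=2$, where Lemma~\ref{Soboell} must be invoked in its $p\ge n$ branch, but the combinatorial structure of the argument is identical.
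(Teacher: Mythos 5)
Your proposal is correct and follows exactly the strategy the paper outlines (by reference to Parviainen's Lemma~3.17): extend $w$ by zero across $\partial_{\rm lat}\Omega_T$, bound the slicewise zero-set capacity below via Lemma~\ref{boundary_p_thick} and the self-improved exponent $q\in(2n/(n+2),2)$ from Theorem~\ref{p_thick_dom_prop}, apply Lemma~\ref{Soboell} slicewise, and interpolate against the $L^\infty_t L^2_x$-norm before integrating in time. Your bookkeeping of the interpolation exponents ($\theta=n/(n+2)$ so that $\theta r=q$ and $(1-\theta)r=2q/n$) and your observation that the self-improvement is indispensable precisely to guarantee $nq/(n-q)>2$ both match the paper's (terse) reasoning; the $n=2$ caveat is also correctly flagged.
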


\subsection{A parabolic covering lemma}
Below we present a technical lemma (see forthcoming Lemma~\ref{lem_covering}) which is nothing that a special version of the classical Calder\'on-Zygmund-Krylov-Safonov decomposition. It will allow us to work with parabolic cylinders that have as horizontal slice a ball instead of a cube. We would recall that a parabolic version of the Calder\'on-Zygmund-Krylov-Safonov covering lemma was proved by one of the authors in \cite{BH12b}, following the elliptic analog (which could be found, for instance, in~\cite[Lemma 1.2]{CP89}). Precisely, in ~\cite[Proposition 2.1]{BH12b} it is shown that a covering of a set by parabolic cylinders 
can be ``arranged'' in an optimal disjointed way so that in each of the relative horizontal slice cube the density is considerably small.
\vspace{1mm}

Since in the present paper we are interesting into analyzing some regularity properties of problem~\eqref{problema} up to the boundary, we need to work with parabolic cylinders having a ball as horizontal slice. For this, in order to prove Lemma~\ref{lem_covering} below, we will extend its elliptic counterpart given by Lemma~3.1 in~\cite{Phu12} to a parabolic framework.

\begin{lemma}\label{lem_covering}
Let $\mathcal{Q}_{\bar{R}}\subset \mathbb{R}^{n+1}$. Assume that $\mathcal{X}\subset \mathcal{Y}\subset \mathcal{Q}_{\bar{R}}$ are measurable sets such that there exists $0<\delta<1$ for which the following properties hold:
\begin{itemize}
\item[(i)]{
$|\mathcal{X}|<\delta |\mathcal{Q}_{\bar{R}}|$;
}\vspace{1mm}
\item[(ii)]{for all $(x,t)\in \mathcal{Q}_{\bar{R}}$ and for all $\rho\in (0, \bar{R}]$ if $|\mathcal{X}\cap \mathcal{Q}_\rho(x,t)|\geq \delta |\mathcal{Q}_\rho(x,t)|$ then $\mathcal{Q}_\rho(x,t)\cap \mathcal{Q}_{\bar{R}}\subset \mathcal{Y}$.
}\vspace{1mm}
\end{itemize}
Then we have 
$$
|\mathcal{X}|\leq c_3\,\delta|\mathcal{Y}|
$$
where $c_3=c_3(n)$.
\end{lemma}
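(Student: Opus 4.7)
The argument I would follow is the classical Vitali--stopping-time scheme, adapting to the parabolic setting the elliptic proof of~\cite[Lemma~3.1]{Phu12}. After removing the negligible set of non-Lebesgue points, I may assume that every $(x,t)\in\mathcal{X}$ is a Lebesgue density point of $\mathcal{X}$ with respect to the family $\{\mathcal{Q}_\rho(x,t)\}_{\rho>0}$, so that $\rho\mapsto|\mathcal{X}\cap\mathcal{Q}_\rho(x,t)|/|\mathcal{Q}_\rho(x,t)|$ tends to $1$ as $\rho\to 0^+$. On the other hand, for $\rho\geq\bar R$ hypothesis~(i) and the monotonicity of $|\mathcal{Q}_\rho|$ in $\rho$ give
\[
|\mathcal{X}\cap\mathcal{Q}_\rho(x,t)|\,\leq\,|\mathcal{X}|\,<\,\delta\,|\mathcal{Q}_{\bar R}|\,\leq\,\delta\,|\mathcal{Q}_\rho(x,t)|,
\]
so the density is strictly smaller than $\delta$ there. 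Hence the stopping radius
\[
\rho_{(x,t)}\,:=\,\sup\bigl\{\rho\in(0,\bar R]\,:\,|\mathcal{X}\cap\mathcal{Q}_\rho(x,t)|>\delta\,|\mathcal{Q}_\rho(x,t)|\bigr\}
\]
is well defined, belongs to $(0,\bar R]$ and, by continuity in $\rho$, satisfies $|\mathcal{X}\cap\mathcal{Q}_{\rho_{(x,t)}}(x,t)|=\delta\,|\mathcal{Q}_{\rho_{(x,t)}}(x,t)|$ as well as the maximality bound $|\mathcal{X}\cap\mathcal{Q}_\rho(x,t)|\leq\delta|\mathcal{Q}_\rho(x,t)|$ for every $\rho\geq\rho_{(x,t)}$.

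I would then apply the parabolic Vitali $5r$-covering lemma to the collection $\{\mathcal{Q}_{\rho_{(x,t)}}(x,t)\}_{(x,t)\in\mathcal{X}}$ to extract a countable, pairwise disjoint subfamily $\{\mathcal{Q}_i:=\mathcal{Q}_{\rho_i}(x_i,t_i)\}$ with $\mathcal{X}\subseteq\bigcup_i 5\mathcal{Q}_i$ up to a null set. The maximality applied on the enlargement $5\mathcal{Q}_i=\mathcal{Q}_{5\rho_i}(x_i,t_i)$ produces $|\mathcal{X}\cap 5\mathcal{Q}_i|\leq\delta\,|5\mathcal{Q}_i|=5^{n+2}\delta\,|\mathcal{Q}_i|$, so that
\[
|\mathcal{X}|\,\leq\,\sum_i|\mathcal{X}\cap 5\mathcal{Q}_i|\,\leq\,5^{n+2}\delta\sum_i|\mathcal{Q}_i|.
\]
Since each $\mathcal{Q}_i$ carries density of $\mathcal{X}$ equal to $\delta$, hypothesis~(ii) provides the inclusion $\mathcal{Q}_i\cap\mathcal{Q}_{\bar R}\subseteq\mathcal{Y}$, and disjointness of the $\mathcal{Q}_i$'s gives $\sum_i|\mathcal{Q}_i\cap\mathcal{Q}_{\bar R}|\leq|\mathcal{Y}|$.

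The last and most delicate step is to upgrade the last inequality to a $\delta$-independent bound $\sum_i|\mathcal{Q}_i|\leq c(n)|\mathcal{Y}|$, via a ``large portion'' geometric lemma of the form $|\mathcal{Q}_i\cap\mathcal{Q}_{\bar R}|\geq c_n|\mathcal{Q}_i|$. For the spatial slice this is the classical cone estimate $|B_{\rho_i}(x_i)\cap B_{\bar R}|\geq c_n|B_{\rho_i}|$, which holds because $x_i\in B_{\bar R}$ and $\rho_i\leq\bar R$. The genuinely new obstacle lies in the time variable: the backward interval $(t_i-\rho_i^2,t_i)$ can protrude below $t_0-\bar R^2$ whenever $t_i$ is within $\rho_i^2$ of the initial time, and in that regime the trivial inclusion $\mathcal{X}\cap\mathcal{Q}_i\subseteq\mathcal{Q}_i\cap\mathcal{Q}_{\bar R}$ alone only yields a time-overlap bounded below by $\delta$, which after substitution would cancel the very $\delta$ we wish to preserve. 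This asymmetry---absent in the elliptic framework of~\cite{Phu12}---is the main technical difficulty. It can be handled by splitting the disjoint family into ``interior'' cylinders (those satisfying $t_i-\rho_i^2\geq t_0-\bar R^2$, for which the $\delta$-free inequality is immediate from the spatial cone estimate) and ``corner'' cylinders (which accumulate near the bottom face of $\mathcal{Q}_{\bar R}$ and whose total contribution is controlled by a direct parabolic packing argument exploiting their disjointness together with the constraint $t_i\in(t_0-\bar R^2,\,t_0-\bar R^2+\rho_i^2)$). Once $\sum_i|\mathcal{Q}_i|\leq c(n)|\mathcal{Y}|$ is established, inserting it in the previous display gives $|\mathcal{X}|\leq c_3\,\delta|\mathcal{Y}|$ with $c_3=c_3(n)$, concluding the proof.
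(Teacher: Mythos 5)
Your stopping-time and Vitali scheme matches the paper's, and your instinct to worry about the time variable is sound: you have in fact spotted a point the paper itself passes over in a single sentence, namely the overlap estimate $|\mathcal Q_{\rho_k}|\le c(n)\,|\mathcal Q_{\rho_k}\cap\mathcal Q_{\bar R}|$. For a backward cylinder $\mathcal Q_{\rho_k}(x_k,t_k)$ whose vertex lies near the bottom face of $\mathcal Q_{\bar R}$ (the ``corner'' case), the inclusion $\mathcal X\subset\mathcal Q_{\bar R}$ together with the stopping condition give only $t_k-(t_0-\bar R^2)\ge\delta\,\rho_k^2$, hence $|\mathcal Q_{\rho_k}\cap\mathcal Q_{\bar R}|\ge 2^{-n}\delta\,|\mathcal Q_{\rho_k}|$, and a thin slab $\mathcal X=B_{\bar R}(x_0)\times(t_0-\bar R^2,\,t_0-\bar R^2+\epsilon)$ with $\epsilon\ll\delta$ shows this factor of $\delta$ is sharp; inserting it would cancel precisely the $\delta$ one is trying to keep. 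The interior/corner split you propose is the right way forward, and the interior case is indeed handled by the spatial cone estimate alone.

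However, the corner case is not closed by the ``direct parabolic packing argument'' you invoke. Disjointness of the corner cylinders together with $x_k\in B_{\bar R}$, $\rho_k<\bar R$ give at best $\sum_{\rm corner}|\mathcal Q_{\rho_k}|\le c(n)\,\bar R^{n+2}=c(n)\,|\mathcal Q_{\bar R}|$, which is a bound by $|\mathcal Q_{\bar R}|$ rather than by $|\mathcal Y|$, and $|\mathcal Y|$ can be far smaller. What is actually needed is a second application of hypothesis (ii), to a time-translated cylinder. For each corner $k$ set $\hat{\mathcal Q}_k:=B_{\rho_k}(x_k)\times(t_0-\bar R^2,\,t_0-\bar R^2+\rho_k^2)$; its vertex $(x_k,\,t_0-\bar R^2+\rho_k^2)$ lies in $\mathcal Q_{\bar R}$ because $0<\rho_k<\bar R$, it has the same volume as $\mathcal Q_{\rho_k}(x_k,t_k)$, and it contains $\mathcal X\cap\mathcal Q_{\rho_k}(x_k,t_k)$ since $\mathcal X\subset\mathcal Q_{\bar R}$ and $t_k<t_0-\bar R^2+\rho_k^2$; hence $|\mathcal X\cap\hat{\mathcal Q}_k|\ge\delta|\hat{\mathcal Q}_k|$ and (ii) yields $\hat{\mathcal Q}_k\cap\mathcal Q_{\bar R}\subset\mathcal Y$. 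Its time interval now fits inside that of $\mathcal Q_{\bar R}$, so the spatial cone estimate alone gives $|\hat{\mathcal Q}_k\cap\mathcal Q_{\bar R}|\ge 2^{-n}|\mathcal Q_{\rho_k}|$, and the sets $\hat{\mathcal Q}_k\cap\mathcal Q_{\bar R}$ are pairwise disjoint, since every corner cylinder's time slab straddles $t_0-\bar R^2$ and disjointness of the original cylinders then forces the balls $B_{\rho_k}(x_k)$ to be pairwise disjoint. Summing gives $\sum_{\rm corner}|\mathcal Q_{\rho_k}|\le 2^n|\mathcal Y|$, which is the missing estimate; without a reapplication of (ii) of this kind the packing argument alone does not reach the conclusion.
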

\begin{proof}
Since (i) holds, for almost every $(x,t)\in \mathcal{X}$ there exists a $\rho_{x,t}< \bar{R}$ so that $|\mathcal{X}\cap \mathcal{Q}_{\rho_{x,t}}(x,t)|=\delta |\mathcal{Q}_{\rho_{x,t}}|$ and $|\mathcal{X}\cap \mathcal{Q}_{\rho}(x,t)|<\delta |\mathcal{Q}_{\rho}|$ for all $\rho_{x,t}<\rho<\bar{R}$.
Then, by the Vitali covering lemma we can extract a sequence of disjoint cylinders $\big\{Q_{\rho_{x_k,t_k}}(x_k, t_k)\big\}_k$ 
such that $\mathcal{X}\subset \left(\bigcup_k   \mathcal{Q}_{5\rho_{x_k,t_k}}(x_k,t_k) \right)\cap \mathcal{Q}_{\bar{R}}$. In view of the choice of $\mathcal{Q}_{\rho_{x_k,t_k}}(x_k,t_k)$, we infer that
$$
|\mathcal{X}\cap \mathcal{Q}_{5\rho_{x_k,t_k}}(x_k,t_k)|<\delta |\mathcal{Q}_{5\rho_{x_k,t_k}}|=5^{n+2}\delta |\mathcal{Q}_{\rho_{x_k,t_k}}|=5^{n+2}|\mathcal{X}\cap \mathcal{Q}_{\rho_{x_k,t_k}}(x_k,t_k)|.
$$ 
Moreover, there exists a constant $c(n)$ such that
$$
|\mathcal{Q}_{\rho_{x_k,t_k}}(x_k,t_k)|\leq c(n)|\mathcal{Q}_{\rho_{x_k,t_k}}(x_k,t_k) \cap \mathcal{Q}_{\bar{R}}|,
$$
since $(x_k, t_k)\in  \mathcal{Q}_{\bar{R}}$ and $\rho_{x_k, t_k}<\bar{R}$.

All in all, we have
\begin{eqnarray*}
|\mathcal{X}|=\left|\left(\cup_k   \mathcal{Q}_{5\rho_{x_k,t_k}}(x_k,t_k) \right)\cap \mathcal{X}\right|&\leq& \sum_k|\mathcal{Q}_{5\rho_{x_k,t_k}}(x_k,t_k)\cap \mathcal{X}|\\
&\leq& \sum_k5^{n+2}\delta|\mathcal{Q}_{\rho_{x_k,t_k}}(x_k,t_k)|\\
&\leq&c(n)5^{n+2}\sum_k \delta |\mathcal{Q}_{\rho_{x_k,t_k}}(x_k,t_k) \cap \mathcal{Q}_{\bar{R}}|\\
&\leq &c(n)5^{n+2}\delta|\mathcal{Y}|,
\end{eqnarray*}
where we used (ii) in the last inequality.
This concludes the proof. 
\end{proof}

\vspace{2mm}
\section{(Higher) regularity for the homogeneous problem and comparison results}\label{sec_comparison}

One of the key-points in the proofs of the main results in the present paper relies on obtaining both local interior and boundary comparison estimates. 
\vspace{1mm}

Let us first handle the interior ones by considering the  following homogeneous Cauchy-Dirichlet problem. 
\begin{equation}\label{problema_omog_int}
\begin{cases}
\displaystyle
w_t-\text{\rm div}\, a(x,t, Dw) = 0 & \text{in} \ \mathcal{Q}_{R} \\
w = u & \text{on} \ \partial_{{\rm par}}\mathcal{Q}_{R},
\end{cases}
\end{equation}
with $\mathcal{Q}_R\equiv \mathcal{Q}_R(x_0,t_0)\subset \Omega_T$ and $u$ being the unique solution to the regularized problem~\eqref{problema_approx}. We recall some basic results   from the higher integrability theory of Gehring. For the proofs we refer to \cite{Lie96}; see also \cite[Theorem~5.5]{BH12b}.

\begin{thm}\label{gehring}
Let $w\in C^0([t_0-R^2,t_0]; L^2(B_R(x_0)))\cap L^2(t_0-R^2,t_0; W^{1,2}(B_R(x_0)))$ be a weak solution to the parabolic equation \eqref{problema_omog_int}$_1$.
Then there exists $\chi_1=\chi_1(n,L, \nu)\break >1$ such that $Dw \in L^{2\chi_1}_{\rm loc}(\mathcal Q_R)$ and for any $q\in (0,2]$ it holds
\begin{equation}\label{stima_gehring}
\left(\dashint_{\mathcal Q_{R/2}} |Dw|^{2\chi_1}\,{\rm d}x\,{\rm d}t\right)^{\frac{1}{2\chi_1}}\, \leq\, c\left(\dashint_{\mathcal{Q}_R}|D w|^q\,{\rm d}x\,{\rm d}t\right)^{\frac 1q}.
\end{equation}
\\ Moreover, for any $q\in (0,2]$ and for any $\chi>1$ it holds
\begin{equation}\label{stima_gehringw}
\left(\dashint_{\mathcal Q_{R/2}} |w|^{2\chi}\,{\rm d}x\,{\rm d}t\right)^{\frac{1}{2\chi}}\, \leq\, c\left(\dashint_{\mathcal{Q}_R}|w|^q\,{\rm d}x\,{\rm d}t\right)^{\frac 1q}.
\end{equation}
The constants $c$ in~\eqref{stima_gehring} and \eqref{stima_gehringw} depend only on $n,L,\nu$ and $q$.
\end{thm}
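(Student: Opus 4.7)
The result is a standard Gehring-type higher integrability estimate, and my plan is to follow the classical three-step scheme adapted to the parabolic setting, concluding with a reduction of the exponent on the right-hand side.

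\emph{Step 1: Caccioppoli estimate.} On concentric parabolic cylinders $\mathcal{Q}_{r'} \subset \mathcal{Q}_{r} \subset \mathcal{Q}_{R}$, I would test the Steklov-averaged formulation~\eqref{steklov} of equation~\eqref{problema_omog_int}$_{1}$ with $\eta^{2} (w_{h} - c)$, where $\eta$ is a spacetime cutoff vanishing outside $\mathcal{Q}_{r}$ and equal to $1$ on $\mathcal{Q}_{r'}$, and $c$ is the mean value of $w$ over $\mathcal{Q}_{R}$. Passing to the limit $h \to 0$, the monotonicity and linear growth bounds in~\eqref{hp_a} together with Young's inequality yield
\[
\sup_{\tau} \dashint_{B_{r'}} |w(\cdot,\tau) - c|^{2} \dx + \dashint_{\mathcal{Q}_{r'}} |Dw|^{2} \xt \leq \frac{C}{(r - r')^{2}} \dashint_{\mathcal{Q}_{r}} |w - c|^{2} \xt.
\]

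\emph{Steps 2--3: Reverse H\"older and Gehring.} Combining this with the (interior) parabolic Sobolev-Poincar\'e inequality applied to $w - c$, where the $L^{\infty}_{t} L^{2}_{x}$ control is absorbed using Step~1, and iterating on the radii to remove the annular dependence, produces a reverse H\"older inequality
\[
\dashint_{\mathcal{Q}_{R/2}} |Dw|^{2} \xt \leq C \left( \dashint_{\mathcal{Q}_{R}} |Dw|^{2\theta} \xt \right)^{1/\theta}
\]
for some $\theta = \theta(n) \in (n/(n+2), 1)$. Gehring's lemma in its Giaquinta-Modica form then yields the existence of $\chi_{1} = \chi_{1}(n, L, \nu) > 1$ and the estimate~\eqref{stima_gehring} for $q = 2$. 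To extend this to any $q \in (0, 2)$ on the right-hand side, I would interpolate between $L^{2\chi_{1}}$ and $L^{q}$, apply Young's inequality, and absorb the resulting $L^{2\chi_{1}}$-term into the left-hand side via a standard iteration on nested cylinders.

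\emph{The estimate~\eqref{stima_gehringw}.} Since $w$ solves a quadratic-growth, quadratically-monotone parabolic equation, the De Giorgi-Nash-Moser iteration, based on Caccioppoli inequalities obtained by testing with $\eta^{2} |w|^{2\beta} w$ for arbitrarily large $\beta$, yields the local boundedness estimate $\|w\|_{L^{\infty}(\mathcal{Q}_{R/2})} \leq C \bigl( \dashint_{\mathcal{Q}_{R}} |w|^{q} \xt \bigr)^{1/q}$ for every $q > 0$. The bound~\eqref{stima_gehringw} follows immediately from this, since $L^{\infty}(\mathcal{Q}_{R/2}) \hookrightarrow L^{2\chi}(\mathcal{Q}_{R/2})$ for every $\chi > 1$.

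\emph{Main obstacle.} The most delicate point is the rigorous handling of the time derivative in Step~1, requiring the Steklov averaging machinery described in Section~\ref{sec_solvability}; beyond this, producing the sub-critical exponent $\theta < 1$ in the reverse H\"older inequality is the essential technical ingredient that enables Gehring's self-improvement, and it crucially rests on combining the $L^{\infty}_{t} L^{2}_{x}$ and $L^{2}_{t} H^{1}_{x}$ bounds jointly furnished by the Caccioppoli estimate.
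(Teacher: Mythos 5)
Your proposal is correct and follows the standard Gehring scheme: Caccioppoli, parabolic Sobolev--Poincar\'e to produce a reverse H\"older inequality, self-improvement via the Gehring lemma, exponent reduction on the right-hand side, and a De Giorgi iteration for the $L^\infty$ estimate yielding \eqref{stima_gehringw}. The paper itself does not prove Theorem~\ref{gehring} (it cites \cite{Lie96} and \cite[Theorem~5.5]{BH12b}), but this is precisely the route the paper takes when it does provide a full argument, namely for the boundary analogue Theorem~\ref{gehring_boundary} and the sup-estimate lemma that follows it, so your approach is essentially the same.
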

Concerning local interior comparison estimates we can use those established in  \cite[Lemma 5.2]{BH12b} or \cite[Lemma 6.4]{BH12}. 
\begin{lemma}\label{comparison}
Let $u\in C^0([-T,0];L^2(\Omega))\cap L^2(-T,0; W^{1,2}_0(\Omega))$ be the unique solution to the regularized problem \eqref{problema_approx}. Moreover, let $w$ be the unique solution of the Cauchy-Dirichlet problem \eqref{problema_omog_int}. Then there exists a constant $c=c(n, \nu)$ such that
$$
\int_{\mathcal{Q}_R}\left(R^{-1}|u-w| + |Du-Dw|\right){\rm d}x\,{\rm d}t
\, \leq 
\, c\,R\int_{\mathcal{Q}_R} |f|\,{\rm d}x\,{\rm d}t. 
$$
\end{lemma}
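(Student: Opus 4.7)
The plan is to follow the classical Boccardo--Gallou\"et truncation approach, adapted to the parabolic setting via the Steklov-averaged formulation of Section~\ref{sec_solvability}. First I would set $v := u - w$ on $\mathcal{Q}_R$. Subtracting the weak formulations of \eqref{problema_approx} and \eqref{problema_omog_int}, $v$ solves (formally, in Steklov averages)
\[
v_t - \operatorname{div}\bigl[a(x,t,Du) - a(x,t,Dw)\bigr] = f \quad \text{in } \mathcal{Q}_R,
\]
with $v \equiv 0$ on $\partial_{\rm par}\mathcal{Q}_R$, since by construction $w$ agrees with $u$ on the whole parabolic boundary of $\mathcal{Q}_R$.

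For $k>0$, introduce the truncation $T_k(s) := \max\{-k,\min\{s,k\}\}$ and its convex primitive $\Theta_k(s) := \int_0^s T_k(\tau)\,\mathrm{d}\tau \geq 0$. Testing the equation for $v$ with $T_k(v)$, I use the nonnegativity of $\Theta_k$ at the upper time level (and its vanishing at $t = t_0 - R^2$) to discard the time term, and the monotonicity bound in \eqref{hp_a} on the diffusive part. This gives the central energy inequality
\[
\sup_{t\in(t_0-R^2,t_0)}\int_{B_R}\Theta_k\bigl(v(\cdot,t)\bigr)\,\mathrm{d}x \;+\; \nu \int_{\mathcal{Q}_R \cap \{|v|<k\}} |Dv|^2\,\mathrm{d}x\,\mathrm{d}t \;\leq\; k\,\|f\|_{L^1(\mathcal{Q}_R)}.
\]

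From this single estimate, applying the parabolic Sobolev embedding and then optimizing in $k$, one recovers the standard Marcinkiewicz bounds for parabolic problems with $L^1$ data (writing $N = n+2$):
\[
\|v\|_{\mathcal{M}^{N/(N-2)}(\mathcal{Q}_R)} \leq c\,\|f\|_{L^1(\mathcal{Q}_R)}, \qquad \|Dv\|_{\mathcal{M}^{N/(N-1)}(\mathcal{Q}_R)} \leq c\,\|f\|_{L^1(\mathcal{Q}_R)},
\]
with $c = c(n,\nu)$. The lemma then follows from the elementary Marcinkiewicz-to-Lebesgue inclusion $\|g\|_{L^1(\mathcal{Q}_R)} \leq c\,|\mathcal{Q}_R|^{1-1/p}\|g\|_{\mathcal{M}^p(\mathcal{Q}_R)}$ (valid for any $p > 1$): applied with $p = N/(N-2)$ to $v$ and with $p = N/(N-1)$ to $Dv$, and using $|\mathcal{Q}_R|^{2/N} \sim R^2$ and $|\mathcal{Q}_R|^{1/N} \sim R$, these precisely combine into the claimed $cR\,\|f\|_{L^1(\mathcal{Q}_R)}$ bound.

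The main technical obstacle I expect is rigorously justifying the test-function step: $v$ possesses only very weak regularity in time, so $T_k(v)$ is not admissible a priori. This must be handled by working with the Steklov averages $u_h,w_h$, testing against $T_k(v_h)$, and then passing to the limit $h \to 0$, using the $L^2$-continuity in time of $u_h,w_h$ together with the Lipschitz character and boundedness of $T_k$ to recover the estimate at the $v$-level. All remaining steps are then routine; indeed, this is precisely the strategy already implemented in the cited \cite[Lemma~5.2]{BH12b} and \cite[Lemma~6.4]{BH12}.
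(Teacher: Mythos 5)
Your argument is correct and follows exactly the route the paper defers to: the paper does not reprove this lemma but simply cites \cite[Lemma 5.2]{BH12b} and \cite[Lemma 6.4]{BH12}, which carry out precisely the Boccardo--Gallou\"et truncation test $T_k(u-w)$ (justified via Steklov averages), derive the scale-invariant Marcinkiewicz bounds $\mathcal{M}^{N/(N-2)}$ for $u-w$ and $\mathcal{M}^{N/(N-1)}$ for $D(u-w)$, and then pass to $L^1$ by the Marcinkiewicz--Lebesgue inclusion to produce the factor $R$. Your scaling bookkeeping ($|\mathcal{Q}_R|^{1/N}\sim R$, $|\mathcal{Q}_R|^{2/N}\sim R^2$) and the dependence $c=c(n,\nu)$ check out, so this is a faithful reconstruction rather than a different proof.
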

For the local boundary estimates we consider $\mathcal{Q}_R=\mathcal{Q}_R(x_0,t_0)\subset \mathbb{R}^{n+1}$ such that it intersects the lateral boundary of $\Om_T$, that is 
\begin{equation*}
\mathcal{Q}_R \cap (\partial \Om\times(-T,0))
\, =\, \mathcal{Q}_R \cap \partial_{{\rm lat}}\Om_T\neq \emptyset.
\end{equation*}
This will be enough for our purposes, see  Remark \ref{rem1} at the beginning of Section~\ref{sec_proofs}. 
Then, we consider the unique solution to the following homogeneous Cauchy-Dirichlet problem 
\begin{equation}\label{problema_omog}
\begin{cases}
\displaystyle
w_t-\text{\rm div}\, a(x,t, Dw) = 0 & \text{in} \ \mathcal{Q}_R \cap\Omega_{T} \\
w = u & \text{on} \ \partial_{{\rm par}}(\mathcal{Q}_R \cap\Omega_{T}).
\end{cases}
\end{equation}
Here we need a higher integrability result for $w$, the counterpart of Theorem~\ref{gehring} up to the boundary, relying on some reverse H\"older inequalities. This kind of boundary higher integrability is already present in the literature (see for instance \cite[Theorem~4.7]{Par09}, and also \cite{Ark95,BDM10,Nik11} for more regular boundaries),
 but we prefer to propose an independent statement (and subsequently the relative proof) fitting our context.

\vspace{2mm}

For this, we need a couple of technical lemmata, the first of which is an appropriate version of the Gehring lemma, see \cite[Proposition 1.3]{GiaquintaStruwe:1982}:

\begin{lemma}\label{gehringLem}
Let $A\subset\R^{n+1}$ an open set and let $g:A\rightarrow \R^k$ be an integrable map such that
$$
	\dashint_{\qq_{\rho/8}}|g|^2\dx\dt
	\, \leq\, \hat c\,\biggl(\dashint_{\qq_\rho}|g|^q\dx\dt\biggr)^{\frac2q}+\vartheta\,\dashint_{\qq_\rho}|g|^2\dx\dt,
$$
for some constant $q<2$ and for all $\qq_\rho\subset A$. Then there exists a constant $\vartheta_0=\vartheta_0(n,q)$ such that if $\vartheta<\vartheta_0$ then $g\in L^{2\chi_1}_{\rm loc}(A)$ with $\chi_1>1$ depending on $n,\hat c,q,\vartheta$; moreover
$$
\biggl(	\dashint_{\qq_{\rho/2}}|g|^{2\chi_1}\dx\dt\biggr)^{\frac{1}{\chi_1}}\leq c\,\dashint_{\qq_\rho}|g|^2\dx\dt
$$
for all $\qq_\rho\subset A$. The constant $c$ also depends on $n,\hat c,q,\vartheta$.
\end{lemma}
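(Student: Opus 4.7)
This is a classical Gehring-type self-improvement inequality, with the additional subtlety that the reverse H\"older-type assumption carries the absorption term $\vartheta \dashint |g|^2$ on the right. I would follow the Calder\'on--Zygmund stopping-time argument of Giaquinta--Modica--Struwe, adapted to parabolic cylinders, with the smallness of $\vartheta_0$ used to close the final bootstrap.

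\textbf{Step 1: Stopping-time decomposition.} Fix $\qq_R \subset A$ with $R$ small enough that all subsequent enlarged cylinders stay inside $A$, and work on the concentric $\qq_{R/2}$. Let $\lambda_0^2 := B \dashint_{\qq_R} |g|^2$ for a large structural constant $B=B(n)$. For every $\lambda \ge \lambda_0$, by Lebesgue differentiation almost every point of $\{|g|^2 > \lambda^2\} \cap \qq_{R/2}$ is the center of a maximal stopping-time parabolic cylinder $\qq_{r_i}(x_i,t_i)$ whose $|g|^2$-average reaches level $\lambda^2$ while remaining strictly below $\lambda^2$ on all larger concentric cylinders. A parabolic Vitali-type selection produces a mutually disjoint subfamily $\{\qq_i\}$ satisfying
\[
\dashint_{\qq_i} |g|^2 \sim \lambda^2 \qquad \text{and} \qquad \dashint_{\qq_{8r_i}(x_i,t_i)} |g|^2 \le \lambda^2,
\]
and covering the super-level set up to the usual fixed enlargement.

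\textbf{Step 2: Applying the hypothesis and layer-cake.} Apply the assumed inequality on each $\qq_i$ with $\rho = 8 r_i$. Using the stopping-time bound for the $|g|^2$ average on $\qq_{8r_i}$ and splitting $|g|^q \le \lambda^q \mathbf{1}_{\{|g| \le \lambda\}} + |g|^q \mathbf{1}_{\{|g| > \lambda\}}$, one arrives at
\[
\lambda^2 |\qq_i| \,\le\, c\, \lambda^{2-q} \!\int_{\qq_{8r_i} \cap \{|g| > \lambda\}} \!\!\!|g|^q \dx \dt + c\, \vartheta\, \lambda^2 |\qq_{8r_i}|.
\]
Summing over the disjoint family, multiplying by $\lambda^{2(\chi-1)-1}$ and integrating in $\lambda \in [\lambda_0,\infty)$, a Fubini exchange produces
\[
\int_{\qq_{R/2}} |g|^{2\chi} \dx \dt \,\le\, c\, \lambda_0^{2(\chi-1)} \!\int_{\qq_R} |g|^2 \dx \dt + c\, \vartheta \!\int_{\qq_R} |g|^{2\chi} \dx \dt,
\]
with $c = c(n,\hat c, q, \chi)$ independent of $\vartheta$.

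\textbf{Step 3: Absorption and the main obstacle.} One then chooses $\vartheta_0$ so that $c\vartheta_0 < 1/2$; the last term is absorbed into the left-hand side by a standard hole-filling / iteration lemma on concentric parabolic cylinders, yielding the desired reverse H\"older inequality with exponent $2\chi_1$. The delicate point is the quantitative interplay between $\chi_1$ and $\vartheta_0$: the constant $c$ in the layer-cake estimate blows up as $\chi \to 1$, so $\chi_1 - 1$ must be chosen small (in terms of $n, \hat c, q, \vartheta$) to ensure that the absorption step can actually be closed. All other ingredients (parabolic Vitali covering, Lebesgue differentiation, truncation at level $\lambda$) are standard and cause no essential difficulty in this setting.
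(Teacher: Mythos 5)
The paper does not prove this lemma; it invokes it directly from Giaquinta--Struwe \cite[Proposition~1.3]{GiaquintaStruwe:1982}, whose argument is exactly the Calder\'on--Zygmund/Vitali stopping-time scheme you outline, so your overall route is the intended one. Two details of your sketch, however, would break the argument as written. In Step~2 you truncate at the level $\lambda$ itself, splitting $|g|^q\le\lambda^q\mathds{1}_{\{|g|\le\lambda\}}+|g|^q\mathds{1}_{\{|g|>\lambda\}}$. Feeding this into the hypothesis on the stopping cylinder gives
\[
\lambda^2\ \le\ \hat c\,2^{\frac2q-1}\lambda^2
  +\hat c\,2^{\frac2q-1}\Bigl(\dashint_{\qq_{8r_i}\cap\{|g|>\lambda\}}|g|^q\Bigr)^{\!\frac2q}
  +\vartheta\lambda^2,
\]
which is vacuous since $\hat c\ge1$; the inequality you display, with only $\vartheta\lambda^2|\qq_{8r_i}|$ surviving as the error term, cannot be reached this way. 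The standard fix is to truncate at $\delta\lambda$ with $\delta=\delta(n,q,\hat c)$ chosen so small that $\hat c\,2^{2/q-1}\delta^2\le\tfrac14$; together with $\vartheta<\vartheta_0$ this makes the $(\delta\lambda)^2$ contribution absorbable into the left-hand side, yielding $\lambda^q\le c\,\dashint_{\qq_{8r_i}\cap\{|g|>\delta\lambda\}}|g|^q$ and a genuinely useful good-$\lambda$ estimate.

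The reasoning in Step~3 about why $\chi_1-1$ must be small is also inverted. After multiplying the good-$\lambda$ estimate by $(2\chi-2)\lambda^{2\chi-3}$ and applying Fubini, the contribution of $\lambda^{2-q}\int_{\{|g|>\delta\lambda\}}|g|^q$ carries a prefactor comparable to $\tfrac{2(\chi-1)}{2\chi-q}\,\delta^{-(2\chi-q)}$, which \emph{vanishes} as $\chi\to1$, while the $\vartheta$-term contributes a prefactor $\approx\vartheta\,\delta^{-(2\chi-2)}\to\vartheta$. Nothing blows up: absorption works precisely because $\chi-1$ can be taken small to make the first coefficient tiny, once $\vartheta<\vartheta_0$ has already controlled the second. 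This is also what lets one pick $\vartheta_0$ \emph{before} $\chi_1$, with the dependence pattern ($\vartheta_0$ on $n,q$; $\chi_1$ additionally on $\hat c,\vartheta$) that the statement records.
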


The second one encodes the self-improving character of reverse H\"older inequalities. The proof follows \cite[Remark 6.12]{Giu03} and uses a covering argument.
\begin{lemma}\label{revHolder}
Let $A,g$ as in {\rm Lemma \ref{gehringLem}} and let
\[
{\biggl(\dashint_{\qq_{\rho/2}}{|g|}^{2\chi_1}\dx\dt\biggr)}^{\!\frac{1}{\chi_1}}
\, \leq\, \hat{c}\,\dashint_{\qq_\rho}|g|^2\dx\dt
\]
hold whenever $\qq_\rho\subset A$ is a cylinder, where $\chi_1>1$ and $\hat{c}\geq1$. Then, for every $\sigma\in(0,2]$, there exists a constant $c=c(n,\sigma, \hat{c})$ such that
\[
{\biggl(\dashint_{\qq_{\rho/2}}{|g|}^{2\chi_1}\dx\dt\biggr)}^{\!\frac{1}{2\chi_1}}
\,\leq\,  c\,{\biggl(\dashint_{\qq_\rho}|g|^\sigma\dx\dt\biggr)}^{\!\frac{1}{\sigma}}
\]
for every $\qq_\rho\subset A$.
\end{lemma}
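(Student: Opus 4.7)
The statement is the classical self-improvement property of reverse Hölder inequalities (Gehring-type); I would follow the standard three-step scheme of interpolation, covering, and iteration à la Giaquinta--Giusti. The case $\sigma=2$ is trivial, so assume $\sigma\in(0,2)$.

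\emph{Step 1 (Hölder--Young interpolation).} Since $\sigma<2<2\chi_1$, log-convexity of the $L^p$-norms gives, with $\theta=\chi_1(2-\sigma)/(2\chi_1-\sigma)\in(0,1)$, and for any cylinder $\qq\subset A$,
\[
\Bigl(\dashint_{\qq}|g|^2\dx\dt\Bigr)^{\!1/2}
\,\leq\, \Bigl(\dashint_{\qq}|g|^{2\chi_1}\dx\dt\Bigr)^{\!\theta/(2\chi_1)}\Bigl(\dashint_{\qq}|g|^{\sigma}\dx\dt\Bigr)^{\!(1-\theta)/\sigma},
\]
so Young's inequality yields, for any $\eps>0$,
\[
\dashint_{\qq}|g|^2\dx\dt
\,\leq\, \eps\Bigl(\dashint_{\qq}|g|^{2\chi_1}\dx\dt\Bigr)^{\!1/\chi_1}
+\,c(\eps,\sigma)\,\Bigl(\dashint_{\qq}|g|^{\sigma}\dx\dt\Bigr)^{\!2/\sigma}.
\]

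\emph{Step 2 (Localized reverse Hölder on concentric cylinders).} Fix $\qq_\rho\subset A$ and two radii $\rho/2\leq r_1<r_2\leq\rho$. For every $(x,t)\in\qq_{r_1}$ set $\eta:=(r_2-r_1)/4$; a direct check using the definition \eqref{bcyl} shows $\qq_\eta(x,t)\subset\qq_{r_2}$. A Vitali-type selection extracts pairwise disjoint cylinders $\{\qq_{\eta/2}(x_k,t_k)\}$ whose fivefold enlargements cover $\qq_{r_1}$. Applying the hypothesis on each $\qq_\eta(x_k,t_k)$ and then Step~1 on the same cylinder, summing (the disjointness keeps the overlaps finite) and absorbing the elementary volume factors, I arrive at
\[
\dashint_{\qq_{r_1}}|g|^{2\chi_1}\dx\dt
\,\leq\, c\,\eps\Bigl(\frac{r_2}{r_2-r_1}\Bigr)^{\!N\chi_1}\dashint_{\qq_{r_2}}|g|^{2\chi_1}\dx\dt
+c(\eps,\sigma)\Bigl(\frac{r_2}{r_2-r_1}\Bigr)^{\!N\chi_1}\Bigl(\dashint_{\qq_{r_2}}|g|^{\sigma}\dx\dt\Bigr)^{\!2\chi_1/\sigma},
\]
where $N=n+2$.

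\emph{Step 3 (Iteration).} Setting $\Phi(r):=\int_{\qq_r}|g|^{2\chi_1}\dx\dt$, the previous inequality has the form
\[
\Phi(r_1)\,\leq\, \vartheta_\eps\,\Phi(r_2)+\frac{A}{(r_2-r_1)^{N\chi_1}}
\]
with $\vartheta_\eps=c\,\eps\,(r_2/(r_2-r_1))^{N\chi_1}\cdot(|\qq_{r_2}|/|\qq_{r_1}|)$. Absorbing the geometric factor into the standard Giaquinta--Giusti iteration lemma (see, e.g., \cite[Lemma~6.1]{Giu03}: if $\Phi$ is bounded on $[\rho/2,\rho]$ and satisfies $\Phi(r_1)\leq\frac{1}{2}\Phi(r_2)+A(r_2-r_1)^{-\alpha}+B$ for all $\rho/2\leq r_1<r_2\leq\rho$, then $\Phi(\rho/2)\leq c(\alpha)(A\rho^{-\alpha}+B)$) with $\eps$ chosen small enough, one obtains
\[
\int_{\qq_{\rho/2}}|g|^{2\chi_1}\dx\dt
\,\leq\, c\,|\qq_\rho|\Bigl(\dashint_{\qq_\rho}|g|^{\sigma}\dx\dt\Bigr)^{\!2\chi_1/\sigma},
\]
which after dividing by $|\qq_{\rho/2}|$ and taking the $(2\chi_1)$-th root is the desired inequality.

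\emph{Main obstacle.} The delicate point is Step~2, namely producing a clean localized version of the reverse Hölder inequality whose blow-up in $r_2-r_1$ is polynomial (so that the iteration lemma can swallow it), while the coefficient of $\Phi(r_2)$ on the right-hand side remains proportional to $\eps$ and can therefore be made $<1/2$. The competition between the small factor $\eps$ (used for absorption) and the large geometric factor $(r_2/(r_2-r_1))^{N\chi_1}$ forces careful bookkeeping; once that balance is achieved, the standard iteration lemma concludes the argument automatically.
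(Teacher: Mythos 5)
Your overall scheme — interpolation (Step~1), localization via a covering (Step~2), Giaquinta--Giusti iteration (Step~3) — is exactly the standard self-improvement argument for reverse H\"older inequalities, and it is precisely what the paper has in mind when it points to \cite[Remark~6.12]{Giu03}. Step~1 is correct: the exponent $\theta=\chi_1(2-\sigma)/(2\chi_1-\sigma)$ is the right one, and Young's inequality gives what you want.

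However, the output of Step~2 as you have written it contains a genuine error, and the difficulty you flag in your ``Main obstacle'' paragraph is an artefact of that error rather than a real feature of the proof. The coefficient $c\,\eps\,(r_2/(r_2-r_1))^{N\chi_1}$ sitting in front of $\dashint_{\qq_{r_2}}|g|^{2\chi_1}$ should \emph{not} be there. With a fixed $\eps$ (as it must be, since $c(\eps,\sigma)$ depends on it) this coefficient blows up as $r_1\to r_2$, so it cannot be made $\le 1/2$ uniformly, and the iteration lemma \cite[Lemma~6.1]{Giu03}, which requires a fixed $\vartheta<1$ in front of $\Phi(r_2)$, simply does not apply. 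No standard iteration lemma absorbs a multiplicative factor blowing up on the $\Phi(r_2)$ side; one would have to let $\eps$ depend on $r_2-r_1$, and then $c(\eps,\sigma)$ blows up too fast for the error terms to be summable.

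The correct way to sum in Step~2 produces no such factor on the first term. Cover $\qq_{r_1}$ by finitely many cylinders $\qq_{\eta}(x_k,t_k)$ (centers on a lattice of spacing $\sim\eta$, $(x_k,t_k)\in\qq_{r_1}$) with bounded, dimensional overlap, choosing $\eta$ so small that $\qq_{2\eta}(x_k,t_k)\subset\qq_{r_2}$. After applying the hypothesis on the pair $(\qq_\eta,\qq_{2\eta})$ and Step~1 on $\qq_{2\eta}$ for each $k$, and summing, the first term is
\[
\sum_k\int_{\qq_{2\eta}(x_k,t_k)}|g|^{2\chi_1}\dx\dt\,\le\, c(n)\int_{\qq_{r_2}}|g|^{2\chi_1}\dx\dt,
\]
with $c(n)$ \emph{independent of $r_2-r_1$}, because the enlarged cylinders $\qq_{2\eta}(x_k,t_k)$ are all contained in $\qq_{r_2}$ and still have bounded overlap. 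The factor $(r_2/(r_2-r_1))^{\text{const}}$ does appear, but only on the $\sigma$-term, where it is harmless. One thus lands on
\[
\int_{\qq_{r_1}}|g|^{2\chi_1}\dx\dt\,\le\, c\,\eps^{\chi_1}\int_{\qq_{r_2}}|g|^{2\chi_1}\dx\dt
+\frac{c(\eps,\sigma)}{(r_2-r_1)^{\alpha}}\,|\qq_\rho|\biggl(\dashint_{\qq_\rho}|g|^\sigma\dx\dt\biggr)^{2\chi_1/\sigma}
\]
for some $\alpha=\alpha(n,\sigma,\chi_1)$; choosing $\eps$ once and for all so that $c\,\eps^{\chi_1}\le1/2$, the iteration lemma then gives the result immediately. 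A further minor slip: with your $\eta=(r_2-r_1)/4$ and a Vitali family with fivefold enlargement, the cylinders $\qq_{5\eta}(x_k,t_k)$ on which you would need to apply the hypothesis are \emph{not} contained in $\qq_{r_2}$ (one needs, say, $\eta\le(r_2-r_1)/24$), and the pairwise disjoint $\qq_{\eta/2}$ do not cover $\qq_{r_1}$ so they cannot be used to bound the left-hand side directly; a bounded-overlap cover is both cleaner and what actually makes the first summed term behave.
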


Now, we are in a position to prove the following
\begin{thm}\label{gehring_boundary}
Let $\Om$ be a bounded domain such that $\mathbb{R}^n\setminus \Om$ satisfies a uniform thick condition with constants $c_0$ and $\rho_0$, and let $w$ be the unique solution to \eqref{problema_omog}. Then there exists a constants $\chi_1= \chi_1(n, \nu,L,\rho_0, c_0)>1$ such that for every $q>0$ there holds
\begin{equation}\label{stima_gehring_boundary}
\left(\frac{1}{|\mathcal{Q}_{R/2}|}\int_{\mathcal{Q}_{R/2} \cap\Omega_{T}}|Dw|^{2\chi_1}\,{\rm d}x\,{\rm d}t\right)^{\!\frac{1}{2\chi_1}} 
\, \leq \,
  c\,\biggl(\frac{1}{|\mathcal{Q}_R |}\int_{\mathcal Q_R\cap\Om_T}|Dw|^q\dx\dt\biggr)^{\frac1q}.
\end{equation} 
for a constant  $c= c(n, \nu,L,\rho_0, c_0,q)$. Moreover we also have
\begin{equation}\label{stima_gehring_boundary_u}
\left(\frac{1}{|\mathcal{Q}_{R/2}|}\int_{\mathcal{Q}_{R/2} \cap\Omega_{T}}|w|^{2\chi}\,{\rm d}x\,{\rm d}t\right)^{\!\frac{1}{2\chi}} 
\, \leq \,
  c\,\biggl(\frac{1}{|\mathcal{Q}_R |}\int_{\mathcal Q_R\cap\Om_T}|w|^q\dx\dt\biggr)^{\frac1q}
\end{equation} 
for any $\chi=\chi(n, \nu,L,\rho_0, c_0)>1$ and the same constant as above.
\end{thm}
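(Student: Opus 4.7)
The plan is to reduce \eqref{stima_gehring_boundary} to a reverse H\"older inequality of the form
\[
\dashint_{\qq_{\rho/8}\cap\Om_T}|Dw|^2\dx\dt
\,\leq\, \hat c\,\biggl(\dashint_{\qq_\rho\cap\Om_T}|Dw|^{q_0}\dx\dt\biggr)^{\!\frac{2}{q_0}}
\]
for some exponent $q_0<2$ and every parabolic cylinder $\qq_\rho$ with $2\qq_\rho\subset \qq_R$, and then to apply Lemma \ref{gehringLem} and Lemma \ref{revHolder}. In order to use Lemma \ref{gehringLem} directly on a map defined on all of $\qq_R$, it is convenient to extend $w$ by zero outside $\Om_T$ (which is legitimate since $w=u=0$ on $\qq_R\cap\partial_{\rm lat}\Om_T$); the same extension is compatible with the Sobolev estimate of Lemma \ref{par_Sobolev}.

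\emph{Interior cylinders.} If $2\qq_\rho\subset\qq_R\cap\Om_T$, a standard Caccioppoli inequality together with the usual parabolic Sobolev embedding (applied slicewise to $w-(w)_{\rho}$) produces a reverse H\"older inequality with $q_0=2(n+2)/(n+4)<2$, plus an absorbable term $\vartheta\dashint_{\qq_\rho}|Dw|^2$. This is the classical interior step of Theorem \ref{gehring}.

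\emph{Boundary cylinders.} If instead $2\qq_\rho$ meets $\qq_R\cap\partial_{\rm lat}\Om_T$, i.e.\ $B_{\rho/6}(x_1)\setminus\Om\neq\emptyset$, one cannot subtract the mean of $w$; one has to exploit that $w\equiv0$ on $\qq_{2\rho}\setminus\Om_T$ after the extension. Testing \eqref{problema_omog}$_1$ with $w\eta^2$ for a suitable cutoff $\eta$ produces the boundary Caccioppoli inequality
\[
\sup_\tau \int_{B_{\rho/2}\cap\Om}|w(\cdot,\tau)|^2\dx +\int_{\qq_{\rho/2}\cap\Om_T}|Dw|^2\dx\dt\,\leq\, \frac{c}{\rho^2}\int_{\qq_\rho\cap\Om_T}|w|^2\dx\dt.
\]
Now comes the use of the thickness hypothesis: by Theorem \ref{p_thick_dom_prop}, $\R^n\setminus\Om$ is uniformly $q$-thick for some $q\in(2n/(n+2),2)$, and Lemma \ref{par_Sobolev} applied to the zero-extended $w$ yields
\[
\frac{1}{|\qq_{\rho/2}|}\int_{\qq_{\rho/2}\cap\Om_T}|w|^2\dx\dt\,\leq\, c\,\rho^2\biggl(\dashint_{\qq_{\rho/2}\cap\Om_T}|Dw|^q\dx\dt\biggr)^{\!\frac{2n}{q(n+2)}}\biggl(\sup_\tau \dashint_{B_{\rho/2}\cap\Om}|w(\cdot,\tau)|^2\dx\biggr)^{\!\frac{2}{n+2}}.
\]
Combining this with Caccioppoli and using Young's inequality (to reabsorb the sup term into the left-hand side up to a fraction) gives the desired reverse H\"older bound with $q_0:=2n/(n+2)\cdot(n+2)/(n+2-\text{something})<2$; the arithmetic produces an exponent strictly smaller than $2$ precisely because of the self-improvement $q<2$.

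\emph{Conclusion.} With the interior and boundary reverse H\"older inequalities available for every admissible cylinder, Lemma \ref{gehringLem} produces a higher integrability exponent $\chi_1=\chi_1(n,\nu,L,c_0,\rho_0)>1$ and the estimate \eqref{stima_gehring_boundary} with $q=2$ on the right-hand side; Lemma \ref{revHolder} then self-improves the right-hand side to any $q>0$. The proof of \eqref{stima_gehring_boundary_u} is analogous but easier: one starts from the energy inequality controlling $\sup_\tau\int|w|^2+\int|Dw|^2$, uses the Poincar\'e-type bound of Lemma \ref{cap_Poinc} (applied slicewise, again thanks to the thickness condition) to replace $|Dw|^2$ by $|w|^2/\rho^2$, and invokes the same two Gehring lemmas.

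\emph{Main obstacle.} The delicate point is the boundary reverse H\"older step: passing from the Caccioppoli inequality to an inequality with an exponent $q<2$ on the right-hand side requires the parabolic capacitary Sobolev inequality of Lemma \ref{par_Sobolev}, which in turn hinges on the self-improvement of thickness from Theorem \ref{p_thick_dom_prop}. It is exactly this self-improvement $q<2$ that makes the Gehring machinery applicable (otherwise one would only have a tautological $L^2$ estimate) and hence drives the dependence of $\chi_1$ on $c_0$ and $\rho_0$.
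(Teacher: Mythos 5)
Your overall plan (extend $w$ by zero, prove a reverse H\"older inequality on all subcylinders $\qq_\rho\subset\qq_R$, distinguish the two cases $B_{\rho/6}\setminus\Om\neq\emptyset$ and $B_{\rho/6}\setminus\Om=\emptyset$, then apply Lemmas \ref{gehringLem} and \ref{revHolder}) matches the paper's strategy, and the interior case and the role of Lemma \ref{par_Sobolev} are correctly identified. However, the boundary step as you describe it does not go through, and the treatment of \eqref{stima_gehring_boundary_u} is not the right mechanism.

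The gap in the boundary reverse H\"older step: after Caccioppoli on $(\qq_{\rho/4},\qq_{\rho/2})$ and Lemma \ref{par_Sobolev} you are left with a factor $\bigl(\sup_\tau\dashint_{B_{\rho/2}\cap\Om}|w(\cdot,\tau)|^2\,{\rm d}x\bigr)^{2/(n+2)}$ on the right-hand side, but the quantities available for absorption on the left are $\sup_\tau\int_{B_{\rho/4}\cap\Om}|w|^2$ and $\int_{\qq_{\rho/4}\cap\Om_T}|Dw|^2$ --- the radii do not match, and Young's inequality cannot convert the $L^2$-norm of $w$ into a gradient term in any case. The ingredient you are missing is precisely the slicewise capacitary Poincar\'e inequality, Lemma \ref{cap_Poinc} combined with Lemma \ref{boundary_p_thick}: a second Caccioppoli on $(\qq_{\rho/2},\qq_\rho)$ bounds $\sup_\tau\int_{B_{\rho/2}\cap\Om}|w|^2$ by $\rho^{-2}\int_{\qq_\rho\cap\Om_T}|w|^2$, and then Poincar\'e with ${\rm cap}_2(K_{\rho/2}(w),B_\rho)\gtrsim\rho^{n-2}$ (which is exactly where $c_0,\rho_0$ enter) turns this into $\int_{\qq_\rho\cap\Om_T}|Dw|^2$. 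Only at that point does Young's inequality with exponents $(n+2)/n$ and $(n+2)/2$ split the product and produce the reverse H\"older inequality with exponent $q<2$ coming from Theorem \ref{p_thick_dom_prop}; your exponent arithmetic (``$q_0=\cdots$'') is also off, since the final exponent on the right is simply the self-improved thickness exponent $q$.

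For \eqref{stima_gehring_boundary_u}, your sketch goes in the wrong direction and uses the wrong tool. Poincar\'e bounds $\int|w|^2$ by $\rho^2\int|Dw|^2$, not the other way around, so ``replace $|Dw|^2$ by $|w|^2/\rho^2$'' is not what Lemma \ref{cap_Poinc} does. More importantly, \eqref{stima_gehring_boundary_u} is asserted for \emph{every} $\chi>1$, which is an $L^\infty_{\rm loc}$ statement; Gehring-type self-improvement only yields some fixed exponent $\chi_1>1$ and cannot deliver arbitrary $\chi$. The paper proves this via a De Giorgi iteration establishing the boundary sup-bound $\sup_{\qq_{R/2}\cap\Om_T}|w|\leq c\bigl(|\qq_R|^{-1}\int_{\qq_R\cap\Om_T}|w|^q\bigr)^{1/q}$, which immediately implies \eqref{stima_gehring_boundary_u}; the Gehring machinery plays no role there.
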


\begin{proof}
The goal here is proving that if we consider the continuation of $w$ to zero (denoted by $\tilde  w$) in $\mathcal Q_R\setminus\Om_T$, then for every parabolic cylinder $\mathcal Q_\rho\equiv\mathcal Q_\rho(x_1,t_1)\subset \mathcal Q_R$ we have a reverse H\"older's inequality 
\begin{equation}\label{nasty}
 \biggl(\dashint_{\mathcal Q_{\rho/8}}|D\tilde w|^2\dx\dt\biggr)^{\frac{1}{2}}
 \leq  c\,\biggl(\dashint_{\mathcal Q_\rho}|D\tilde  w|^q\dx\dt\biggr)^{\frac{1}{q}}
 +\, \frac{\vartheta_0}{2} \biggl(\dashint_{\mathcal Q_{\rho}}|D\tilde  w|^2\dx\dt\biggr)^{\frac{1}{2}}, 
 \end{equation}
for some $q<2$ and for a constant depending on $n,\nu,L,\rho_0,c_0,\vartheta_0$, where $\vartheta_0$ is the constant of Lemma~\ref{gehringLem} corresponding to $q$. Therefore we could apply the Gehring lemma in the form of Lemma~\ref{gehringLem}, and then Lemma~\ref{revHolder}, to infer \eqref{stima_gehring_boundary}, after again restricting $\tilde  w$ to $\mathcal Q_R\cap\Om_T$. The point here is that we are not interested in the boundary higher integrability of $w$ in the part of the same boundary inside $\Omega_T$, but just in the higher integrability on cylinders centered in $\partial_{\rm lat}\Om_T$, where $w\equiv u\equiv0$.  In a very rough sense, this is similar to interior higher integrability. Hence now we fix a parabolic cylinder $\mathcal Q_\rho\subset \mathcal Q_R$; note that due to the assumption $\mathcal Q_R\cap\Om_0=\emptyset$, we also have  $\mathcal Q_\rho\cap\Om_0=\emptyset$.

\vspace{2mm}

The scheme of the proof is the following: first we consider  the case where $B_{\rho/6}\setminus\Om\neq\emptyset$. Here we first use a Caccioppoli type inequality to, roughly speaking, control the gradient $Dw$ over $B_{\rho/4}\cap\Om_T$ with the integral of $w$ over $B_{\rho/2}\cap\Om_T$. By applying the Sobolev inequality over $B_{\rho/2}\cap\Om_T$, we now need to estimate the supremum in \eqref{rig} in terms of $|Dw|$ and this is again achieved  with the help of a Caccioppoli type inequality and the Poincar\'e capacity inequality in Lemma \ref{cap_Poinc}. Finally, we get \eqref{nasty} by narrowing the set on the left-hand side. On the other hand, if $B_{\rho/6}\setminus\Om=\emptyset$, estimate \eqref{nasty} between $B_{\rho/8}$ and $B_{\rho/6}$ is just an interior one, and then we can enlarge the domain on the right-hand side.

\vspace{2mm}

{\bf Suppose now} hence {\bf $B_{\rho/6}\setminus\Om\neq\emptyset$} (but clearly also $B_{\rho/6}\cap\Om\neq\emptyset$, otherwise there would be nothing to prove) and consider a cut-off function $\varphi \in C^\infty_0(\mathcal Q_{R})$, $0\leq\varphi\leq1$ such that $\varphi\equiv1$ on $\qq_{\rho/4}$, ${\rm supp}\,\varphi\subset \qq_{\rho/2}$ and $|D\varphi|+\rho|\varphi_t|\leq c/\rho$. 
Note that from now on all the cylinders with radii multiple of $\rho$ will have vertex $(x_1,t_1)$. Up to a standard regularization of $w$ in time by Steklov averages, 
we use as a test function in \eqref{problema_omog} the map $\phi:=w\varphi^2\zeta_\varepsilon\mathds{1}_{\Om_T}$, 
where, once fixed $\tau \in (t_1-(\rho/2)^2,t_1)$, $\zeta_\varepsilon\equiv\zeta_{\varepsilon,\tau}(t)\in W^{1,\infty}(\R)$ is a piecewise linear continuous map, for $\varepsilon\leq (t_1-\tau)/2$, such that $\zeta_\varepsilon\equiv1$ in $(-\infty,\tau]$ and $\zeta_\varepsilon\equiv0$ in $[\tau+\varepsilon,+\infty)$. 
Note that for a.\!~e. $t\in (t_1-(\rho/2)^2,t_1)$, we have $\phi(\cdot,t)\in W^{1,2}_0(\Omega)$ and thus can be used in \eqref{steklov}. 
We therefore have, integrating over $(t_1-(\rho/2)^2,t_1)$,
\begin{multline}\label{cixi}
\int_{\mathcal Q_{\rho/2}\cap\Omega_T}w_tw\varphi^2\zeta_\varepsilon\dx\dt+\int_{\mathcal Q_{\rho/2}\cap\Omega_T}\langle a(x,t,Dw),Dw\rangle\varphi^2\zeta_\varepsilon\dx\dt\\=-2\int_{\mathcal Q_{\rho/2}\cap\Omega_T}\langle a(x,t,Dw),D\varphi\rangle w\varphi\zeta_\varepsilon\dx\dt.
\end{multline}
Now, we treat the first term in the following way, using integration by parts and recalling that $\varphi\zeta_\varepsilon\equiv0$ on $(B_{\rho/2}\cap\Om)\times\{t_1-(\rho/2)^2,t_1\}$,

\begin{align}\label{est.pr}
\int_{\qq_{\rho/2}\cap\Omega_T}w_tw&\varphi^2\zeta_\varepsilon\dx\dt=\frac12\int_{\qq_{\rho/2}\cap\Omega_T}\big(|w|^2\big)_t\varphi^2\zeta_\varepsilon\dx\dt\notag\\[1ex] 
&= -\frac12\int_{\qq_{\rho/2}\cap\Omega_T}|w|^2(\varphi^2\zeta_\varepsilon)_t\dx\dt\notag\\[1ex] 
&=-\int_{\qq_{\rho/2}\cap\Omega_T}|w|^2\varphi\varphi_t\zeta_\varepsilon\dx\dt+\frac{1}{2}\,\dashint_\tau^{\tau+\varepsilon}\!\!\int_{B_{\rho/2}\cap\Om}|w|^2\varphi^2\dx\dt.
\end{align}
Letting $\varepsilon\searrow0$ and then taking the (essential) supremum with respect to $\tau\in(t_1-(\rho/2)^2,t_1)$ we therefore get
\begin{multline}\label{elio}
\sup_{\tau\in(t_1-(\rho/2)^2,t_1)}\int_{\mathcal Q_{\rho/2,\tau}\cap\Omega_T}w_tw\varphi^2\dx\dt\geq-c\,\int_{\qq_{\rho/2}\cap\Omega_T}\frac{|w|^2}{\rho^2}\dx\dt\\+\frac12\sup_{\tau\in(t_1-(\rho/2)^2,t_1)}\int_{B_{\rho/2}\cap\Om}\big[|w|^2\varphi^2\big](\cdot,\tau)\dx.
\end{multline}
where we denoted for shortness with $\mathcal Q_{\rho/2,\tau}$ the set $\qq_{\rho/2}\cap\mathds{1}_{\{t<\tau\}}$.
The second term in the left-hand side of~\eqref{cixi} is estimated from below
\begin{equation}\label{est.dut}
\int_{\mathcal Q_{\rho/2}\cap\Omega_T}\langle a(x,t,Dw),Dw\rangle\varphi^2\zeta_\varepsilon\dx\dt
 \,\geq\,  \frac\nu2\int_{\mathcal Q_{\rho/2,\tau}\cap\Om_T}|Dw|^2\varphi^2\dx\dt 
\end{equation}
with $c= c(\nu,L)$, having at hand the following monotonicity condition, that can be deduced by the assumptions in~\eqref{hp_a}:
$$
\langle a(x,t,z),z\rangle\, \geq\,\frac\nu2|z|^2 \ \, \text{for any} \ (x,t)\in\Omega_T \ \text{and any} \ z\in\R^n.
$$

For the latter term in the right-hand side of \eqref{cixi} we have, using again the growth condition \eqref{hp_a}$_2$ together with the Young inequality,
\begin{eqnarray}\label{est.tre}
&&\!\!\!\!\!\!\!\!\!\!\!\!\!\!\!\! 2\int_{\mathcal Q_{\rho/2}\cap\Omega_T}\langle a(x,t,Dw),D\varphi\rangle w\varphi\zeta_\varepsilon\dx\dt \nonumber\\
 &&\qquad \leq \frac\nu4\int_{\Omega_T\cap \mathcal Q_{\rho/2,\tau+\varepsilon}}|Dw|^2\varphi^2\dx\dt +\frac{c(\nu,L)}{\rho^2}\int_{\mathcal Q_{\rho/2}\cap\Omega_T}|w|^2\dx\dt.
\end{eqnarray}
To conclude we put \eqref{est.pr}, \eqref{est.dut} and \eqref{est.tre} into \eqref{cixi}, we let $\varepsilon\searrow0$, we perform some algebraic manipulations and we take the supremum with respect to $\tau\in (t_1-(\rho/2)^2,t_1)$. Using also \eqref{elio} finally we get
\begin{eqnarray}\label{rh1}
&& \sup_{\tau\in(t_1-(\rho/4)^2,t_1)}\int_{ B_{\rho/4}\cap\Om}|w(\cdot,\tau)|^2\dx+ \int_{\mathcal Q_{\rho/4}\cap\Omega_T}|Dw|^2\dx\dt \nonumber\\[0.5ex]
&& \qquad  \qquad  \qquad  \qquad   \qquad  \qquad \qquad  \leq\frac{c(\nu,L)}{\rho^2}\,\int_{\mathcal Q_{\rho/2}\cap\Omega_T}|w|^2\dx\dt,
\end{eqnarray}
recalling that $\varphi\equiv1$ on $\mathcal Q_{\rho/4}$. Note that also a completely similar estimate, where the cylinder $\mathcal Q_{\rho/2}$ appears on the left-hand side, and the cylinder $\mathcal Q_\rho$ on the right-hand side, follows by straightforward changes in the proof above.

\vspace{3mm}

We recall now we are supposing $B_{\rho/6}\setminus\Om\neq\emptyset$; therefore we can use the Sobolev inequality in Lemma \ref{par_Sobolev} in the right-hand side of~\eqref{rh1} and, after dividing by $|\mathcal Q_{\rho/2}|$, we infer
\begin{eqnarray}\label{rig}
&& \frac{c}{|\mathcal Q_{\rho/4}|}\int_{\mathcal Q_{\rho/4}\cap \Omega_T}|Dw|^2\dx\dt \nonumber \\
&& \qquad\qquad\qquad  \leq \, c\,\biggl( \frac{1}{|\mathcal Q_{{\rho}/{2}}|}\int_{\mathcal Q_{{\rho}/{2}}\cap\Omega_T}|Dw|^q\dx\dt\biggr)^{\frac{2n}{q(n+2)}}  \\[0.5ex]
&& \qquad\qquad\qquad \quad  \, \times
\biggl(\sup_{\tau\in(t_1-({\rho}/{2})^2,t_1)}\frac{1}{|\mathcal Q_{{\rho}/{2}}|}\int_{B_{\rho/2}\cap\Om}|w(\cdot,\tau)|^2\dx\biggr)^{\frac{2}{n+2}} \nonumber
\end{eqnarray}
where $c$ depends upon $n,\nu,L,\rho_0,c_0$. Finally, we estimate the supremum on the right-hand side in the following way: for a.~\!e. $\tau \in (t_1-(\rho/2)^2,t_1)$ using \eqref{rh1} in its version over $\mathcal Q_{\rho/2}$ and $\mathcal Q_{\rho}$ and Poincar\'e's inequality~\eqref{poci} slice-wise, for $p=2$, we have
\begin{eqnarray*} 
 \int_{B_{\rho/2}\cap\Om}|w(\cdot,\tau)|^2\,\dx
 \! & \leq & \! \frac{c}{\rho^2}\,\int_{\mathcal Q_{\rho}\cap \Omega_T}|w|^2\dx\dt \nonumber
 \\[1ex]
& \leq & \! \frac{c}{\rho^2}\,\int_{t_1-\rho^2}^{t_1}\frac{\rho^n}{{\rm cap}_2(K_{\rho/2}(w),B_{\rho})}\int_{B_{\rho}}|Dw|^2\dx\dt \nonumber 
\end{eqnarray*}
where $K_{\rho/2}(w)=\{x\in \overline{B_{\rho/2}}:w(x)=0\}$. We recall that  we extended $w$ to zero in $\mathcal Q_R\setminus\Om_T$; note this gives $w\in W^{1,2}(B_R)$ for a.~\!e. $t$ since $w\equiv 0$ on $\partial_{\rm lat}\Omega_T$. Now, since $K_{\rho/2}(w)\supset \overline{B_{\rho/2}}\setminus\Omega$ and $B_{\rho/6}\setminus\Om\neq\emptyset$ we can apply Lemma~\ref{boundary_p_thick} to get
\[
{\rm cap}_2(K_{\rho/2}(w), B_{\rho})
\, \geq\, {c_1}\,{\rm cap}_2(\overline{B_{\rho/2}}, B_{\rho})
\,= \,c\,\rho^{n-2}, 
\]
and therefore, again dividing by $|\mathcal Q_{\rho/2}|$,
\[
\sup_{\tau\in(t_1-(\rho/2)^2,t_1)} \frac{1}{|\mathcal Q_{\rho/2}|}\int_{B_{\rho/2}\cap\Om}|w(\cdot,\tau)|^2\dx \leq \frac{c}{|\mathcal Q_{\rho/2}|}\int_{\mathcal Q_\rho\cap \Omega_T}|Dw|^2\dx\dt.
\]
Putting the latter estimate into \eqref{rig} and using the Young inequality with conjugate exponents $(n+2)/2$ and $(n+2)/n$, with $\vartheta_0$ being the constant appearing in Lemma~\ref{gehringLem} corresponding to the choice of $q$ following from Lemma~\ref{par_Sobolev},  we infer
\begin{eqnarray*}
 &&\!\!\!\!\!\frac{c}{|\mathcal Q_{\rho/4}|}\int_{\mathcal Q_{\rho/4}\cap \Omega_T}|Dw|^2\dx\dt \nonumber \\[1ex]
 && \ \  \leq  c\,\biggl( \frac{1}{|\mathcal Q_{\rho/2}|}\int_{\mathcal Q_{\rho/2}\cap\Omega_T}|Dw|^q\dx\dt\biggr)^{\frac{2n}{q(n+2)}}
 \biggl( \frac{1}{|\mathcal Q_{\rho/2}|}\int_{\mathcal Q_\rho\cap \Omega_T}|Dw|^2\dx\dt\biggr)^{\frac{2}{n+2}}
 \notag\\[1ex]
 && \ \  \leq  c(\vartheta_0)\,\biggl( \frac{1}{|\mathcal Q_{\rho/2}|}\int_{\mathcal Q_{\rho/2}\cap\Omega_T}|Dw|^q\dx\dt\biggr)^{\frac{2}{q}}\\
 & &\ \ \quad +\displaystyle \Bigl(\frac{\vartheta_0}{100}\Bigr)^2 \frac{1}{|\mathcal Q_\rho|}\int_{\mathcal Q_{\rho}\cap \Omega_T}|Dw|^2\dx\dt. \notag
\end{eqnarray*}
The inequality above plainly yields~\eqref{nasty} in the case $B_{\rho/6}\setminus\Om\neq\emptyset$.

\vspace{3mm}

 {\bf The case $B_{\rho/6}\setminus\Om=\emptyset$} is easier, since now an estimate 
\[
 \biggl(\dashint_{\mathcal Q_{\rho/8}}|D\tilde  w|^2\dx\dt\biggr)^{\frac{1}{2}}\leq c\,\biggl(\dashint_{\mathcal Q_{\rho/6}}|D\tilde  w|^q\dx\dt\biggr)^{\frac{1}{q}},
\]
$q$ possibly different from the one appearing in \eqref{rig}, is an interior one and can be deduced following \cite{GiaquintaStruwe:1982} or \cite{KL}. At this point we just enlarge the integral on the right-hand side to get \eqref{nasty}. Note that actually higher integrability estimate~\eqref{stima_gehring_boundary} holds for every couple of cylinders $\qq_{\rho/4}$, $\qq_{\rho/2}$ such that $\qq_\rho\subset\qq_R$, and we shall use this fact in few lines.

\vspace{3mm}
 
Finally, the estimate \eqref{stima_gehring_boundary_u} for~$w$ is a straightforward consequence of 
boundary sup-estimate for parabolic problems with null boundary datum. For this, we refer for instance to \cite[Theorem 6.30]{Lie96} in a slightly different, linear setting. For the sake of the reader we also propose in the following lemma a proof adapted to our framework. 
\end{proof}

\begin{lemma}
 Let $w$ be the solution of the Cauchy-Dirichlet problem~\eqref{problema_omog}. Then for every $q>0$ it holds
\begin{equation}\label{sup.u}
 \sup_{\qq_{R/2}\cap \Om_T}|w|\leq c\,\biggl(\frac{1}{|\qq_R|}\,\int_{\qq_R\cap\Om_T}|w|^q\dx\dt\biggr)^{\frac1q},
\end{equation}
 for a constant $c$ depending $n,\nu,L,c_0,\rho_0,q$.
\end{lemma}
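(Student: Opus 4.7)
My plan is to prove \eqref{sup.u} via a classical Moser-type iteration for solutions with null lateral data; this is indeed the route taken in \cite[Theorem 6.30]{Lie96} cited just above. The key structural observation is that $w\equiv u\equiv 0$ on $\qq_R\cap\partial_{\rm lat}\Om_T$, so the null extension of $w$ to $\qq_R\setminus\Om_T$ lies slicewise in $W^{1,2}(B_R)$. This lets one apply the standard parabolic Sobolev inequality once a spatial cutoff is in place, and crucially means that \emph{no} capacity information on $\R^n\setminus\Om$ is needed for this step — in sharp contrast with the Gehring-type argument of Theorem~\ref{gehring_boundary}.

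The first step is a Caccioppoli-type bound for powers $|w|^\beta$, $\beta\geq 1$: fixing $R/2\leq r<R$ and a cutoff $\varphi\in C^\infty_0(\qq_R)$ with $\varphi\equiv 1$ on $\qq_r$ and $|D\varphi|+(R-r)|\varphi_t|\leq c/(R-r)$, testing the weak form of \eqref{problema_omog}$_1$ (Steklov-regularised as in \eqref{steklov}) with the admissible function $|w|^{2\beta-2}w\,\varphi^2\,\zeta_\varepsilon\mathds{1}_{\Om_T}$, where $\zeta_\varepsilon$ is the time cutoff already used in the proof of Theorem~\ref{gehring_boundary}, and exploiting \eqref{hp_a}, integration by parts in time and Young's inequality, yields
\[
\sup_\tau\!\int_{B_r\cap\Om}\!|w|^{2\beta}(\cdot,\tau)\,\dx+\int_{\qq_r\cap\Om_T}\!\bigl|D|w|^\beta\bigr|^2\dx\dt\leq\frac{c\,\beta^{c_*}}{(R-r)^2}\int_{\qq_R\cap\Om_T}\!|w|^{2\beta}\dx\dt,
\]
with $c,c_*$ depending only on $n,\nu,L$. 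The only delicate point is the handling of the time derivative, but this is routine once the Steklov regularisation is in force.

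Since $|w|^\beta\varphi$ has spatial support in $B_r$ (and, after zero extension, vanishes on $\partial_{\rm lat}\Om_T$), the standard parabolic Sobolev inequality combined with the above Caccioppoli gives, with $\kappa:=1+2/n$, the reverse-H\"older type estimate
\[
\biggl(\int_{\qq_r\cap\Om_T}|w|^{2\beta\kappa}\dx\dt\biggr)^{\!1/\kappa}\leq\frac{c\,\beta^{c_*}}{(R-r)^2}\int_{\qq_R\cap\Om_T}|w|^{2\beta}\dx\dt.
\]
Choosing $\beta_k:=\kappa^k$ and $R_k:=R/2+R/2^{k+1}$ (so that $R_k-R_{k+1}=R/2^{k+2}$) and iterating this inequality between consecutive radii, the product of constants converges because $\sum_k k\kappa^{-k}$ and $\sum_k\kappa^{-k}$ do. Letting $k\to\infty$ produces
\[
\sup_{\qq_{R/2}\cap\Om_T}|w|\leq c\,\biggl(\frac{1}{|\qq_R|}\int_{\qq_R\cap\Om_T}|w|^2\dx\dt\biggr)^{\!1/2}.
\]

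It remains to promote this to arbitrary $q>0$. For $q\geq 2$ the statement is immediate from H\"older's inequality. For $0<q<2$, I would run the previous bound on every pair of radii $R/2\leq\rho<r\leq R$ and interpolate
\[
\|w\|_{L^2(\qq_r\cap\Om_T)}^{2}\leq\bigl(\sup_{\qq_r\cap\Om_T}|w|\bigr)^{2-q}\|w\|_{L^q(\qq_R\cap\Om_T)}^{q},
\]
so that Young's inequality yields the hole-filling estimate
\[
\sup_{\qq_\rho\cap\Om_T}|w|\leq\tfrac12\sup_{\qq_r\cap\Om_T}|w|+\frac{c}{(r-\rho)^{(n+2)/q}}\|w\|_{L^q(\qq_R\cap\Om_T)};
\]
a standard iteration lemma applied to $\rho\mapsto\sup_{\qq_\rho\cap\Om_T}|w|$ then absorbs the first term and produces \eqref{sup.u}. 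The principal obstacle is merely the bookkeeping of the $\beta$-polynomial dependencies through the Moser iteration; everything else is by now standard.
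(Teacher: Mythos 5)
Your proposal is correct, but it proceeds by Moser (power) iteration, whereas the paper runs a De Giorgi (level-set) iteration: the paper introduces the functionals $U(h,\rho)=\int_{\qq_\rho\cap\Om_T}(w-h)_+^2$, $V(h,\rho)=|\qq_\rho\cap\Om_T\cap\{w\geq h\}|$, establishes the two recursions linking them via a Caccioppoli inequality for $(w-k)_+$ and the parabolic Sobolev embedding, and then iterates in levels $k_i=2(1-2^{-i})k_1$ and radii $\rho_i=R/2+R/2^i$, concluding with the fast-geometric-convergence lemma of \cite[Lemma 7.1]{Giu03} to produce the $L^2\to L^\infty$ bound; it then lowers the exponent to arbitrary $q>0$ by exactly the interpolation–Young–iteration argument you sketch (citing \cite[Lemma 6.1]{Giu03}). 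Both routes are standard and of comparable length, so neither is materially simpler here; De Giorgi sidesteps the $\beta$-polynomial bookkeeping that you rightly flag as the main overhead of Moser, while Moser avoids the auxiliary two-parameter functional and the joint level/radius iteration. Your remark that no capacity-density information is actually needed for the sup-estimate is well taken: since the test function $(w-k)_+\eta$ (respectively $|w|^\beta\varphi$) has compact spatial support in $B_R$, its null extension is slicewise $W^{1,2}_0(B_R)$ and the standard parabolic Sobolev inequality applies; the paper nevertheless invokes the capacity-dependent Lemma~\ref{par_Sobolev} and thus carries $c_0,\rho_0$ in the constant, a dependence that one could in principle drop. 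One small slip in your write-up: you state that $|w|^\beta\varphi$ has spatial support in $B_r$; with $\varphi$ a cutoff from $\qq_r$ to $\qq_R$, the support is in $B_R$ (not $B_r$), but since vanishing near $\partial B_R$ is all that is needed for the Sobolev step, this does not affect the argument.
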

\begin{proof}
Consider, for $R/2\leq\rho\leq R$ and $h\in\R$, the quantities
\begin{eqnarray}\label{deffss}
 U(h,\rho):=\int_{\qq_\rho\cap\Omega_T}\big|(w-h)_+\big|^2\dx\dt, \nonumber \\[-0.7ex]
 \\[-0.7ex]
 V(h,\rho):=|\qq_\rho\cap\Omega_T\cap\{w\geq h\}|=:|A(h,\rho)|; \nonumber
\end{eqnarray}
here $s_+$ is the positive  parts of $s$; here all the cylinders share the same center~$(x_0,t_0)$. For $\rho_1<\rho_2$ both belonging to $[R/2,R]$ and with $h<k$, we have the relations
\begin{align}\label{schi}
V(k,\rho_1)&\leq\frac{1}{(k-h)^2}U(h,\rho_2),\notag\\ 
U(k,\rho_1) &\leq c\,\frac{1}{(\rho_2-\rho_1)^2}U(h,\rho_2)\big[V(h,\rho_2)\big]^{\frac{2}{n+2}}.
\end{align}
The first inequality plainly follows by the involved definitions. Indeed,
\begin{equation}\label{eq_stare}
V(k,\rho_1)(k-h)^2\leq\int_{A(k,\rho_1)}(w-h)_+^2\dx\dt\leq\int_{A(h,\rho_1)}(w-h)_+^2\dx\dt.
\end{equation}
For the second one, we need some work. First of all, minor modifications in the proof of Caccioppoli's inequality \eqref{rh1} give for all $i\in\mathds{N}$
\begin{eqnarray}\label{Cacde}
&& \!\!\!\!\!\!\!\!\!\!\!\!\!\!\!\!\!\!\!\!\!\!\!\displaystyle \sup_{\tau\in(t_0-(\rho_1+\rho_2)^2/4,t_0)} \int_{B_{(\rho_1+\rho_2)/2}\cap\Om}\big|(w-k)_+(\cdot,\tau)\big|^2\dx\notag\\
&&\qquad \qquad\qquad\qquad+ \int_{\qq_{(\rho_1+\rho_2)/2}\cap\Om_T}\big|D(w-k)_+\big|^2\dx\dt \notag\\[1ex]
&&\qquad\qquad\qquad\qquad \quad \leq \displaystyle  \frac{c(\nu,L)}{(\rho_2-\rho_1)^2}\,\int_{\qq_{\rho_2}\cap\Om_T}\big|(w-k)_+\big|^2\dx\dt
\end{eqnarray}
for every $h\in\R$; in particular we need to choose here the test function 
\[
\phi:=(w-k)_+\varphi^2\zeta_\varepsilon\mathds{1}_{\Om_T},
\]
with $\varphi \in C^\infty_0(\mathcal Q_{R})$, $0\leq\varphi\leq1$ such that $\varphi\equiv1$ on~$\qq_{(\rho_1+\rho_2)/2}$, ${\rm supp}\,\varphi\subset \qq_{\rho_2}$ and $|D\varphi|+(\rho_2-\rho_1)|\varphi_t|\leq c/(\rho_2-\rho_1)$. 

\vspace{3mm}
Now, consider a cut-off function $\eta\in C^\infty_0(\qq_{(\rho_1+\rho_2)/2})$ such that $0\leq\eta\leq1$, $\eta\equiv1$ on $\qq_{\rho_1}$ and $|D\eta|\leq c/(\rho_2-\rho_1)$. By H\"older's inequality and the fact that $|\eta|\leq 1$
\begin{multline*} 
\int_{\qq_{\rho_1}\cap\Om_T}\big|(w-k)_+\big|^2\dx\dt \leq c(n)\,\int_{\qq_{(\rho_1+\rho_2)/2}\cap\Om_T}\big|(w-k)_+\eta\big|^2\dx\dt\\
 \leq c(n)\,\biggl(\int_{\qq_{(\rho_1+\rho_2)/2}\cap\Om_T}\big|(w-k)_+\eta\big|^{2(1+\frac2n)}\dx\dt\biggr)^{\frac{n}{n+2}}\big[V(\rho_2,k)\big]^{\frac{2}{n+2}}.
\end{multline*}
To estimate the right-hand side we use Sobolev's embedding Lemma \ref{par_Sobolev} and then twice Caccioppoli's inequality \eqref{Cacde},
\begin{multline*} 
\int_{\qq_{(\rho_1+\rho_2)/2}\cap\Om_T}\big|(w-k)_+\eta\big|^{2(1+\frac2n)}\dx\dt\\
\leq c\, \biggl(\sup_{\tau\in(t_0-(\rho_1+\rho_2)^2/4,t_0)}\int_{B_{(\rho_1+\rho_2)/2}\cap\Om}\big|(w-k)_+(\cdot,\tau)\big|^2\dx\biggr)^{\frac2n}\\
 \times \biggl(\int_{\qq_{\rho_2}\cap\Om_T}\big|D(w-k)_+\big|^2\dx\dt+\frac{1}{(\rho_2-\rho_1)^2}\int_{\qq_{\rho_2}\cap\Om_T}\big|(w-k)_+\big|^2\dx\dt\biggr)\\
 \leq c\,\biggl(\frac{1}{(\rho_2-\rho_1)^2}\int_{\qq_{\rho_2}\cap\Om_T}\big|(w-k)_+\big|^2\dx\dt
 \biggr)^{1+\frac2n};
\end{multline*}
we also used here the estimates for $|\eta|$ and $|D\eta|$. We estimate the first part of the latter term from above, keeping in mind that $h<k$ and therefore $A(k,\rho_2)\subset A(h,\rho_2)$
\begin{eqnarray*}
\int_{\qq_{\rho_2}\cap\Om_T}\big|(w-k)_+\big|^2\dx\dt
 & \leq & \int_{A(\rho_2,k)}\big|(w-h)_+\big|^2\dx\dt \\
& \leq & 
\int_{\qq_{\rho_2}\cap\Om_T}\big|(w-h)_+\big|^2\dx\dt.
\end{eqnarray*}
Finally, it suffices to merge all the estimates above together with~\eqref{eq_stare} (by replacing $\rho_1$ with $\rho_2$ there) to obtain the inequality in~\eqref{schi}.
\vspace{0,5mm}

At this point, we define the sequence of level sets and radii
\[
0< k_i:=2\Bigl(1-\frac{1}{2^i}\Bigr)k_1,\qquad\rho_i:=\frac{R}{2}+\frac{R}{2^i}
\]
where $k_1$ is a positive constant which will be defined in few lines. We moreover choose into \eqref{schi}, for $i\in\mathds{N}$ fixed, $\rho_1\equiv\rho_{i+1}$, $\rho_2\equiv\rho_i$, $k_i\equiv h <k\equiv k_{i+1}$. Call $U_i:=U(k_i,\rho_i)$ and $V_i:=V(k_i,\rho_i)$. 
By taking the $\alpha$-th power, $\alpha\geq0$, of \eqref{schi}$_2$ and the $\beta$-th power, $\beta\geq0$, of \eqref{schi}$_1$, and multiplying the resulting inequalities we finally have
$$
U_{i+1}^\alpha V_{i+1}^\beta 
\, \leq \,
c(\alpha,\beta) \frac{\,2^{2(\alpha+\beta)i}}{k_1^{2\beta} R^{2\alpha}}
U_i^{\alpha+\beta}V_i^{\frac{2\alpha}{n+2}},
$$
where $c(\alpha,\beta)$ depends also on $n,\nu,L,c_0,\rho_0$. Looking for $\vartheta>1$ such that $\alpha+\beta=\vartheta\alpha$, ${2\alpha}/{(n+2)}=\vartheta\beta$ gives the value 
\[
\vartheta=\frac12+\sqrt{\frac14+\frac{2}{n+2}};
\]
therefore, fixing $\alpha=(n+2)\vartheta/2$ (and therefore $\beta=1$), we have
\[
U_{i+1}^\alpha V_{i+1}
\, \leq \,
\tilde c\,\frac{2^{2\vartheta\alpha i}}{|\qq_R|^\vartheta k_1^2}
\Bigl(U_i^\alpha V_i\Bigr)^{\vartheta}. 
\]
A well known iteration argument, see  \cite[Lemma 7.1]{Giu03}, ensures that 
\begin{equation}\label{eqas}
\lim_{i\to \infty} U_i^\alpha V_i=0\qquad\text{if}\qquad \big[U_1^\alpha V_1\big]^{\vartheta-1}\leq \frac{2^{-\frac{\vartheta\alpha}{\vartheta-1}}}{\tilde c}  
|\qq_R|^{\vartheta}k_1^2.
 \end{equation}
Since, recalling that $\rho_i\geq R/2$ and $k_i\leq k_1$,
\[
0\leq \biggl(\int_{\qq_{R/2}\cap\Om_T}\big|(w-k_1)_+\big|^2\dx\dt\biggr)^\alpha|\qq_{R/2}\cap\Om_T\cap\{w\geq k_1\}|\leq U_i^{\alpha}V_i\longrightarrow0;
\]
this yields $w_+\leq k_1$ almost everywhere in $\qq_{R/2}\cap\Om_T$. We now find an appropriate choice of $k_1$ ensuring that the condition in \eqref{eqas} is satisfied. Since 
\[
\vartheta-1=\sqrt{\frac14+\frac{2}{n+2}}-\frac12
=\frac{2}{n+2}\frac{1}{\vartheta}=\frac1\alpha,
\]
recalling the definitions in \eqref{deffss} and the fact that $\rho_1=R$, we have
\begin{align*}
\big[U_1^\alpha V_1\big]^{\vartheta-1}&=\biggl(\int_{\qq_R\cap\Om_T}\big|(w-k_1)_+\big|^2\dx\dt\biggr)^{\alpha(\vartheta-1)}|\qq_R\cap\Om_T\cap\{w\geq k_1\}|^{\vartheta-1}\\
& \leq |\qq_R\cap\Om_T|^{\vartheta-1}\int_{\qq_R\cap\Om_T}(w_+)^2\dx\dt \\
& \leq  |\qq_R|^{\vartheta-1}\int_{\qq_R\cap\Om_T}(w_+)^2\dx\dt\\
&\leq \frac{2^{-\frac{\vartheta\alpha}{\vartheta-1}}}{\tilde c} |\qq_R|^\vartheta k_1^2,
\end{align*}
if we choose as starting level
\[
k_1^2:=\frac{2^{\frac{\vartheta\alpha}{\vartheta-1}}\,\tilde c}{|\qq_R|}\int_{\qq_R\cap\Om_T}|w|^2\dx\dt=\frac{c}{|\qq_R|}\int_{\qq_R\cap\Om_T}|w|^2\dx\dt,
\]
$c\equiv c(n,\nu,L,c_0,\rho_0)$. A similar argument, considering the negative part instead of the positive one, yields $w_-\leq k_1$. Therefore we showed the validity of \eqref{sup.u}, but with the $L^2$ norm on the right-hand side. We show here how to lower this exponent, following \cite[Chapter 7]{Giu03}. First notice that the estimate 
\[
 \sup_{\qq_{r_1}\cap \Om_T}|w|\leq c\,\biggl(\frac{1}{(r_2-r_1)^{n+2}}\int_{\qq_{r_2}\cap\Om_T}|w|^2\dx\dt
 \biggr)^{\frac12},
\]
for all $R\leq r_1<r_2\leq R$ holds as well; this can be seen with slight modifications of the previous proof, or adapting \cite[Corollary 7.1]{Giu03} to our setting. Note that this last approach in our case would require a more involved argument: we should indeed distinguish the cases as in the first part of the proof of Theorem~\ref{gehring_boundary}, and change a bit the radii. At this point, with this estimate at hand, we argue as follows: denote by $\mathcal{U}(r)$ the supremum of $|w|$ over $\qq_r\cap\Om_T$, for $R/2\leq r\leq R$. Then for $R/2\leq r_1<r_2\leq R$ and $q>0$, using Young's inequality
\begin{align*}
\mathcal{U}(r_1)&\leq c\,\biggl(\frac{1}{(r_2-r_1)^{n+2}}\int_{\qq_{r_2}\cap\Om_T}{|w|}^q\dx\dt\biggr)^{\frac12}\big[\mathcal{U}(r_2)\big]^{\frac{2-q}{2}}\\
&\leq \frac12\, \mathcal{U}(r_2)+c\,\biggl(\frac{1}{(r_2-r_1)^{n+2}}\int_{\qq_{R}\cap\Om_T}|w|^q\dx\dt\biggr)^{\frac1q}.
\end{align*}
A well known iteration lemma (see, e.~\!g., \cite[Lemma 6.1]{Giu03}) at this point finally gives~\eqref{sup.u}. 
\end{proof}

We conclude this section by stating  the counterpart of Lemma~\ref{comparison} up to the boundary. The proof plainly follows by that of Lemma~4.1 in \cite{KM12c} (see also~\cite[Lemma 3.3]{DP12b}), once taking into account the modifications needed in order to handle the fact that the cylinder intersects the boundary, see \cite[Lemma 2.6]{Phu12} and the proof of  previous Theorem~\ref{gehring_boundary}.

\begin{lemma}\label{comparison_boundary}
Let $u\in C^0([-T,0];L^2(\Omega))\cap L^2(-T,0; W^{1,2}_0(\Omega))$ be the unique solution to the regularized problem \eqref{problema_approx}, and let $w$ 
be the unique solution of the Cauchy-Dirichlet problem~\eqref{problema_omog}. Then there exists a positive constant depending only on $n,\nu$ such that
$$
\int_{\mathcal{Q}_R\cap \Om_T} \left( R^{-1}|u-w| + |Du-Dw|\right)\,{\rm d}x\,{\rm d}t
\,\leq \,c\,R\int_{\mathcal{Q}_R\cap \Om_T} |f|\,{\rm d}x\,{\rm d}t.
$$
\end{lemma}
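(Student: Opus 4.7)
The plan is to adapt the SOLA-style comparison estimate from Kuusi-Mingione \cite{KM12c} to the boundary situation, the chief modifications being that the Sobolev-Poincaré tools employed in the Marcinkiewicz step must be replaced by their capacity-based counterparts, and that the difference $u-w$ has to be shown to possess the right vanishing properties on $\partial_{\rm par}(\mathcal Q_R\cap\Omega_T)$.

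Set $v:=u-w$. By construction $v\equiv 0$ on $\partial_{\rm par}(\mathcal Q_R\cap\Omega_T)$: on the portions of this parabolic boundary lying inside $\Omega_T$ this is the Dirichlet datum prescribed in \eqref{problema_omog}, while on $\mathcal Q_R\cap\partial_{\rm lat}\Omega_T$ both $u$ and $w$ vanish in the trace sense. Hence the slicewise zero extension of $v$ to $\mathcal Q_R$ is in the right Sobolev class for Lemma \ref{par_Sobolev} to apply. Subtracting the Steklov-averaged formulations for $u$ and $w$ yields
\[
v_t-\mathrm{div}\bigl[a(x,t,Du)-a(x,t,Dw)\bigr]=f\qquad\text{in }\mathcal Q_R\cap\Omega_T,
\]
and this is the equation to be tested.

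The core of the argument is the usual Boccardo-Gallou\"et truncation. For $k>0$ set $T_k(s):=\max\{-k,\min\{k,s\}\}$ and $\Phi_k(s):=\int_0^sT_k(\sigma)\,d\sigma\ge 0$. Testing slicewise with $T_k(v)$ (after a standard Steklov regularization in time), the time contribution integrates to $\int\Phi_k(v)(\cdot,t_0)\,dx\ge 0$ because $v$ vanishes on the initial slice of $\mathcal Q_R\cap\Omega_T$, while the monotonicity in \eqref{hp_a} controls the elliptic term from below by $\nu|DT_k(v)|^2$. One therefore obtains the standard truncated energy estimate
\[
\nu\int_{\mathcal Q_R\cap\Omega_T\cap\{|v|\le k\}}|Dv|^2\,dx\,dt\;\le\;k\int_{\mathcal Q_R\cap\Omega_T}|f|\,dx\,dt,
\]
together with the companion $L^\infty_t(L^2_x)$ bound for $T_k(v)$. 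Interpolating these two via the parabolic Sobolev inequality of Lemma \ref{par_Sobolev} leads, through the classical level-set argument of Boccardo-Gallou\"et, to Marcinkiewicz estimates for $|v|$ and $|Dv|$ with decay exponents strictly greater than $1$. The thickness of $\R^n\setminus\Omega$ and its self-improvement in Theorem \ref{p_thick_dom_prop} are used exactly here, to legitimize the application of Lemma \ref{par_Sobolev} to the extension of $v$.

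A H\"older-type inequality for Marcinkiewicz spaces then converts those decay estimates into the sought $L^1$ bound
\[
\int_{\mathcal Q_R\cap\Omega_T}\bigl(R^{-1}|u-w|+|Du-Dw|\bigr)\,dx\,dt\;\le\;c\,R\int_{\mathcal Q_R\cap\Omega_T}|f|\,dx\,dt,
\]
with $c=c(n,\nu)$ once the parabolic scaling factors have been accounted for. The principal obstacle, in comparison with the interior argument of \cite{KM12c}, is ensuring that the Gagliardo-Nirenberg-type embedding underlying the Marcinkiewicz step continues to hold for functions vanishing only on the lateral portion of the cylinder's boundary: this is exactly what Lemma \ref{par_Sobolev} provides, and it is the route by which the thickness hypothesis enters the statement in parallel with Theorem \ref{gehring_boundary}.
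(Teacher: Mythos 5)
Your overall strategy — the Boccardo--Gallou\"et truncation scheme applied to $v := u - w$, the slicewise energy and $L^\infty_t L^2_x$ estimates for $T_k(v)$, Marcinkiewicz decay via a parabolic Sobolev embedding, and a Marcinkiewicz--H\"older inequality to return to $L^1$ — is precisely the route the paper indicates by citing \cite[Lemma 4.1]{KM12c}. However, there is a genuine gap at the Sobolev step, and it concerns the constant. You correctly note that $v$ vanishes on the \emph{whole} parabolic boundary of $\mathcal Q_R\cap\Omega_T$, but then conclude that the zero extension of $v$ is ``in the right class for Lemma~\ref{par_Sobolev} to apply'' and that the thickness of $\R^n\setminus\Omega$ (with the self-improvement of Theorem~\ref{p_thick_dom_prop}) is what legitimizes the embedding. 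That inference is misplaced. Since $v$ vanishes on all of $\partial_{\rm par}(\mathcal Q_R\cap\Omega_T)$, for a.e.\ $t$ one has $v(\cdot,t)\in W^{1,2}_0(B_R\cap\Omega)$; its zero extension lies in $W^{1,2}_0(B_R)$ (indeed in $W^{1,2}(\R^n)$), and the classical parabolic Gagliardo--Nirenberg--Sobolev inequality applies with a constant depending only on $n$, with no capacity input whatsoever. Lemma~\ref{par_Sobolev} is built for the genuinely weaker hypothesis where the function vanishes only on $\mathcal Q_{\rho/2}\cap\partial_{\rm lat}\Omega_T$ but not on $\partial B_R\cap\Omega_T$, which is the situation for $w$ alone in Theorem~\ref{gehring_boundary}; that is the only place capacity is truly needed.

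The consequence is concrete: if the estimate is routed through Lemma~\ref{par_Sobolev}, the resulting constant inherits the dependence on $c_0$ and $\rho_0$, whereas the lemma asserts $c=c(n,\nu)$ only. You do claim $c=c(n,\nu)$ at the end, but your chain of inequalities does not deliver it. The fix is simple: replace the appeal to Lemma~\ref{par_Sobolev} and Theorem~\ref{p_thick_dom_prop} with the standard zero-trace parabolic Sobolev inequality for the zero extension of $v$. With that substitution your argument closes correctly and, as one would expect, yields exactly the same constant dependence as the interior comparison Lemma~\ref{comparison} — which is precisely why the boundary version can be stated with $c(n,\nu)$ and no dependence on the capacity data.
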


\vspace{2mm}

\section{Proof of the main results}\label{sec_proofs}

This section is devoted to the proof of the main result, to the regularity analysis of the borderline case, and to precise Lorentz space estimates up to the boundary for the solution $u$. First, however, a couple of remarks allowing to simplify the proofs.

\begin{rem}\label{rem1}
Note that we can extend a solution to problem \eqref{problema} to $\Omega_{2T}:=\Om\times(-2T,0)$, where we also extend the function $f$ to zero in $\Om\times(-2T,-T]$ and the vector field $a$ in the following way: $\tilde{a}(x,t,z):=a(x,-2T-t,z)$ for $t\in(-2T,-T)$. Therefore, denoting respectively by $\tilde u,\tilde f$ the extension, we have that $\tilde u$ satisfies
\[
\begin{cases}
\displaystyle
\tilde u_t-\text{\rm div}\, \tilde a(x,t, D\tilde u) = \tilde f(x,t) \quad& \text{in} \ \Omega_{2T},\\
\tilde u = 0 & \text{on} \ \partial_{{\rm par}}\Om_{2T}.
\end{cases}
\]
Indeed, we briefly sketch  this argument. Given a test function $\varphi\in C^\infty_0(\Om_{2T})$, and supposing $\,{\rm supp}\,\varphi\cap\Om\times\{-T\}\neq\emptyset$ \,in order to avoid trivialities, it is clearly enough to prove that the distributional formulation \eqref{def_distrib} is satisfied in $C^{\infty}_{0}(\Omega_T)$  by the restriction of $\varphi$ in $\Omega_T$, i.~\!e.
\begin{equation}\label{part}
-\int_{\Om_T} u\varphi_t\,{\rm d}x\,{\rm d}t+\int_{\Om_T}\langle a(x,t,Du),D\varphi\rangle\,{\rm d}x\,{\rm d}t=\int_{\Om_T}f\varphi\,{\rm d}x\,{\rm d}t, 
\end{equation}
with $\varphi$ as above. To this aim, take a $C^\infty(\R)$ function $\zeta_\varepsilon$ such that $\zeta_\varepsilon\equiv1$ on $[-T+2\varepsilon,+\infty)$, $\zeta_\varepsilon\equiv 0$ on $(-\infty,-T+\varepsilon]$ and $|\zeta_\varepsilon'|\leq c/\varepsilon$, and put as a test function into \eqref{def_distrib} the product $\phi:=\varphi\zeta_\varepsilon\in C^\infty_0(\Omega_T)$. We get
\begin{multline*}
-\int_{\Om_T} u\varphi_t\zeta_\varepsilon\,{\rm d}x\,{\rm d}t-\int_{\Om_T} u\varphi\zeta_\varepsilon'\,{\rm d}x\,{\rm d}t+\int_{\Om_T}\langle a(x,t,Du),D\varphi\rangle\zeta_\varepsilon\,{\rm d}x\,{\rm d}t\\=\int_{\Om_T}f\varphi\zeta_\varepsilon\,{\rm d}x\,{\rm d}t.
\end{multline*}
Now, we let $\varepsilon$ go to zero. Notice that both the first and the third integral on the left-hand side converge to the corresponding integrals in~\eqref{part}; the same happens for the integral on the right-hand side. It remains to consider the contribution involving the derivative of $\zeta_\varepsilon$; for this, we have
\[
\biggl|\int_{\Om_T} u\varphi\zeta_\varepsilon'\,{\rm d}x\,{\rm d}t\biggr|
\, \leq \, c\,\|\varphi\|_{L^\infty(\Omega_T)}\dashint_{-T+\varepsilon}^{-T+2\varepsilon}\int_\Om|u(x,t)|\dx\dt\xrightarrow[
\varepsilon\to0]{}0
\]
thanks to~\eqref{initial_value}. This completes the claim.

\vspace{3mm}

This will allow to treat, by a simple argument, only cylinders not intersecting the initial boundary. Indeed for these, it would be enough to extend the solution as described above, and treat them as they were interior cylinders, or eventually cylinders just intersecting the lateral boundary.
\end{rem}

\begin{rem}
We stress here that we shall also not take into account the terminal boundary. Indeed the arguments in~\cite{BH12b} could be also performed using not symmetric, but backward parabolic cylinders as those in \eqref{bcyl}. This would apply in particular to parabolic cylinders of the form $\mathcal Q_\rho(x_0,0)$, with $B_\rho(x_0)\subset\Om$.
\end{rem}

\subsection{Integrability of $Du$}

We firstly need  a preliminary lemma, whose proof can be deduced by extending the original proof in the local case in~\cite[Lemma 6.2]{BH12b} (see, also,~\cite[Propositions 3.2 and 3.4]{Phu12}), which will permit us to apply the Calder\'on-Zygmund type Lemma~\ref{lem_covering} to the level sets of maximal operator functionals applied to the gradient of the solutions to our problem~\eqref{problema}. Since the novelty here is given by the up to the boundary estimates, we prefer to state this result in the following form, that is in accordance with the analog in the elliptic case given by Proposition 3.4 in~\cite{Phu12}.

\begin{lemma}\label{lem_37}
Let  
$u$ be the solution to~\eqref{problema_approx}. Let $A>1$ be an absolute constant depending only on $n$, $L$, $\nu$, $c_0$, $\rho_0$, and let $\chi_1>1$ be the higher integrability exponent as in {\rm Theorem~\ref{gehring} and~\ref{gehring_boundary}\footnote{Actually, we take the minimum between the higher integrability exponent $\chi_1$ arising in both the aforementioned theorems. For the sake of simplicity, we keep the same symbol.
}
}. Then for every $S>1$ and any $\lambda>0$
 we have the following:
if for some cylinder $\qq_\rho$, with $\rho<\min\{\rho_0,R_0\}/12$, $\rho_0$ as in  {\rm Definition~\ref{def_p_thick}} and $R_0$ the radius of a fixed ball $B_{R_0}$, it holds
\begin{equation*}
\left|\left\{ (x,t)\in \qq_\rho : M^*(\mathds{1}_{\Omega_T}|Du|)(x,t) > AS\lambda\right\}\right| \, \geq \, S^{-2\chi_1}|\qq_\rho|\,,
\end{equation*}
then there exists $\varepsilon=\varepsilon(S,\chi_1)$ such that
$$
\qq_\rho \subset \left\{ M^*(\mathds{1}_{\Omega_T}|Du|) > \lambda \right\} \cup \left\{ M^*_1(\mathds{1}_{\Omega_T} f)>\varepsilon\lambda\right\},
$$
where we denoted by
$M^*:=M^*_{0,\qq_{R_0}}$ and $M^*_1:=M^*_{1,\qq_{R_0}}$.
\end{lemma}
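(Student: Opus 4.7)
The strategy is to argue by contrapositive: assume that some $(x_0,t_0)\in\qq_\rho$ satisfies
\[
M^*(\mathds{1}_{\Omega_T}|Du|)(x_0,t_0)\leq\lambda\qquad\text{and}\qquad M^*_1(\mathds{1}_{\Omega_T}|f|)(x_0,t_0)\leq\varepsilon\lambda,
\]
and show that then $|\{(x,t)\in\qq_\rho:M^*(\mathds{1}_{\Omega_T}|Du|)>AS\lambda\}|<S^{-2\chi_1}|\qq_\rho|$ for $A$ large (depending only on the structural constants) and $\varepsilon$ sufficiently small depending on $S$ and $\chi_1$. Thanks to Remark~\ref{rem1} we may assume $\qq_\rho\cap\Om_0=\emptyset$, so that only two geometric configurations occur: either the enlarged cylinder $\qq_{4\rho}\equiv\qq_{4\rho}(x_0,t_0)$ lies in $\Om_T$ (interior case), or $\qq_{4\rho}\cap\partial_{\rm lat}\Om_T\neq\emptyset$ (boundary case, to which the smallness $\rho<\rho_0/12$ and the $p$-thick property of $\R^n\setminus\Om$ apply via Lemma~\ref{boundary_p_thick}).

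In each configuration, let $w$ solve the homogeneous Cauchy-Dirichlet problem~\eqref{problema_omog_int} in $\qq_{4\rho}$ (interior case) or~\eqref{problema_omog} in $\qq_{4\rho}\cap\Om_T$ (boundary case), with boundary datum $u$. The comparison estimate of Lemma~\ref{comparison} (respectively Lemma~\ref{comparison_boundary}) and the defining property of $M^*_1(\mathds{1}_{\Omega_T}|f|)$ at $(x_0,t_0)$ give
\[
\dashint_{\qq_{4\rho}\cap\Om_T}|Du-Dw|\dx\dt\leq c\,\rho\dashint_{\qq_{4\rho}\cap\Om_T}|f|\dx\dt\leq c\,|\qq_{4\rho}|^{\frac1N-1}M^*_1(\mathds{1}_{\Omega_T}|f|)(x_0,t_0)\,|\qq_{4\rho}|^{1-\frac1N}\leq c\,\varepsilon\lambda,
\]
since $|\qq_{4\rho}|^{1/N}\simeq\rho$. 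Triangle inequality plus $M^*(\mathds{1}_{\Omega_T}|Du|)(x_0,t_0)\leq\lambda$ then give $\dashint_{\qq_{4\rho}\cap\Om_T}|Dw|\leq c(1+\varepsilon)\lambda$, whence the higher integrability of Theorem~\ref{gehring} (or~\ref{gehring_boundary}) upgrades this to
\[
\biggl(\dashint_{\qq_{2\rho}\cap\Om_T}|Dw|^{2\chi_1}\dx\dt\biggr)^{\!\frac{1}{2\chi_1}}\leq c\,\lambda,
\]
with $c$ depending only on $n,\nu,L,c_0,\rho_0$.

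To estimate the superlevel set, I split the maximal operator: for $(x,t)\in\qq_\rho$, either the optimal cylinder lies in $\qq_{2\rho}$, or it has radius comparable to (or larger than) $\rho$ and is contained in a concentric dilation of $\qq_{4\rho}$ about $(x_0,t_0)$; in the latter ``far'' case a standard comparison of averages produces the bound $c_\ast\lambda$ with $c_\ast=c_\ast(n)$. Choosing $A\geq 2c_\ast$ removes this contribution, so that
\[
\{M^*(\mathds{1}_{\Omega_T}|Du|)>AS\lambda\}\cap\qq_\rho\subset\Bigl\{M^*_{\qq_{2\rho}}(|Du-Dw|)>\tfrac{AS\lambda}{2}\Bigr\}\cup\Bigl\{M^*_{\qq_{2\rho}}(|Dw|)>\tfrac{AS\lambda}{2}\Bigr\}.
\]
The weak-$(1,1)$ bound applied to the first set, together with the comparison estimate, gives a measure $\lesssim(\varepsilon/(AS))|\qq_\rho|$; the weak-$L^{2\chi_1}$ bound for the second set, combined with the higher integrability just obtained, yields a measure $\lesssim(AS)^{-2\chi_1}|\qq_\rho|$. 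Summing, we obtain a bound of the form $\bigl(c\varepsilon/(AS)+c/(AS)^{2\chi_1}\bigr)|\qq_\rho|$.

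Finally, fixing $A=A(n,\nu,L,c_0,\rho_0,\chi_1)$ large enough to make the second term strictly less than $\tfrac{1}{2}S^{-2\chi_1}|\qq_\rho|$ and then choosing $\varepsilon=\varepsilon(S,\chi_1)$ small enough to force the first term below $\tfrac{1}{2}S^{-2\chi_1}|\qq_\rho|$ produces the required strict inequality and concludes the proof. The main technical point I expect to be delicate is the boundary configuration: one has to ensure that the radius $\rho$ is small enough that Lemma~\ref{boundary_p_thick} applies (justifying the restriction $\rho<\rho_0/12$) and that the higher-integrability Theorem~\ref{gehring_boundary}, which is genuinely global in nature because $w$ is extended by zero outside $\Om_T$, can be invoked with a constant independent of the specific cylinder; all the other steps follow standard Calder\'on-Zygmund maximal function machinery.
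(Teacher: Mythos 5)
Your proposal follows the paper's own proof essentially step by step: contrapositive reformulation, localization of the restricted maximal operator to a dilate of $\qq_\rho$ (absorbed by taking $A\geq 3^N$), a case split on whether the dilated cylinder lies in $\Omega_T$ or meets $\partial_{\rm lat}\Omega_T$, comparison with the homogeneous solution $w$, the weak-$(1,1)$ maximal bound for $|Du-Dw|$ combined with Lemma~\ref{comparison}/\ref{comparison_boundary}, the weak-$L^{2\chi_1}$ bound for $|Dw|$ driven by Theorem~\ref{gehring}/\ref{gehring_boundary}, and the final choice of $A$ large then $\varepsilon=\varepsilon(S,\chi_1)$ small. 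The one cosmetic deviation is in the boundary case: the paper re-centers the comparison problem at a boundary point $(y_2,s_2)$ realizing the parabolic distance from the center of $\qq_\rho$ and works on $\qq_{12\rho}(y_2,s_2)\cap\Omega_T$ (whence the $/12$ in the radius restriction), while you keep it on $\qq_{4\rho}(x_0,t_0)\cap\Omega_T$ about the good point; your version can also be carried out, but since backward cylinders $\qq_r\ni(x,t)$ with $(x,t)\in\qq_\rho$, $r\leq\rho$, need not lie in a cylinder of radius only $2\rho$ centered at the good point, you would have to enlarge some intermediate radii in the nested inclusions accordingly.
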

\begin{proof}
First, we notice that the statement can be restated as follows:
\vspace{1mm}

{\it Assume that for some cylinder $\qq_\rho$ with $\rho<\min\{\rho_0,R_0\}/12$ we have
\begin{equation}\label{hp}
\qq_\rho \cap \left\{ M^*(\mathds{1}_{\Omega_T}(|Du|) \leq \lambda \right\} \cap \left \{ M^*_1(\mathds{1}_{\Omega_T}f) \leq \varepsilon\lambda\right\} 
\neq \emptyset,
\end{equation}
then
\begin{equation}\label{eq_33}
\left|\left\{ (x,t)\in \qq_\rho : M^*(\mathds{1}_{\Omega_T}|Du|)(x,t)>AS\lambda\right\}\right| 
< S^{-2\chi_1}|\qq_\rho|.
\end{equation}
}
\vspace{1mm}

By the hypothesis \eqref{hp} there exists $(y_0,s_0)\in \mathcal{Q}_\rho\equiv \mathcal{Q}_\rho(y_1,s_1)$ such that, for any $0<r\leq R_0$, with $(y_0,s_0)\in \mathcal{Q}_r$,
\begin{equation}\label{conseq_hp}
\dashint_{\mathcal{Q}_r}\mathds{1}_{\Omega_T}|Du|\,{\rm d}y\,{\rm d}s\leq \lambda
\end{equation}
and
\begin{equation}\label{conseq_hp2}
r\,\dashint_{\mathcal{Q}_r}\mathds{1}_{\Omega_T}|f|\,{\rm d}y\,{\rm d}s\leq \varepsilon\lambda,
\end{equation}
reminding the definition of maximal operator.
\vspace{1mm}

We want to prove that, for any $(x,t)\in \mathcal{Q}_\rho(y_1,s_1)$, the following inequality holds
\begin{equation}\label{claim}
M^*(\mathds{1}_{\Omega_T}|Du|)(x,t)\leq\max\Big\{M^{*}(\mathds{1}_{\mathcal{Q}_{2\rho}(y_1,s_1)\cap\Omega_T}|Du|)(x,t), \, 3^N\lambda\Big\}.
\end{equation}
For this, we consider a generic cylinder $\mathcal{Q}_{r}$ such that $(x,t)\in \mathcal{Q}_{r}\subseteq \mathcal{Q}_{R_0}$; for any $r\leq \rho$, $\mathcal{Q}_{r}\subset \mathcal{Q}_{2\rho}(y_1,s_1)$, and so we have
$$
\dashint_{\mathcal{Q}_r}\mathds{1}_{\Omega_T}|Du|\,{\rm d}y\,{\rm d}s\leq M^{*}(\mathds{1}_{\mathcal{Q}_{2\rho}(y_1,s_1)\cap\Omega_T}|Du|)(x,t).
$$
On the other hand, for any $\rho<r\leq R_0$ we plainly have, enlarging, if necessary, the cylinder $\mathcal{Q}_{r}$, and using \eqref{conseq_hp} 
\begin{equation*}
\dashint_{\mathcal{Q}_r}\mathds{1}_{\Omega_T}|Du|\,{\rm d}y\,{\rm d}s\leq  3^N\lambda.
\end{equation*}
Thus, the inequality in~\eqref{claim} is proven. 
\vspace{1mm}

Now, let $\mathcal{Q}_{4\rho}\equiv\mathcal{Q}_{4\rho}(y_1,s_1) \supset \mathcal{Q}_{\rho}$ and distinguish the following three cases: $\mathcal{Q}_{4\rho}\subset \mathds{R}^{n+1}\setminus \Omega_T$, $\mathcal{Q}_{4\rho}\cap \,\partial_{{\rm par}} \Omega_T\neq \emptyset$ and $\mathcal{Q}_{4\rho}\subset \Omega_T$.
\vspace{0.5mm}

In the first case, the inequality in~\eqref{eq_33} does hold provided that $A\geq 3^N$, by taking into account~\eqref{claim}.
\vspace{0.5mm}

In order to deal with the second case, that is where the cylinder $\mathcal{Q}_{4\rho}$ intersects $\partial_{{\rm par}}\Omega_T$, we extend to the parabolic setting the strategy in Proposition~3.2 in~\cite{Phu12}.
Precisely, take any $(y_2,s_2) \in \partial_{{\rm par}}\Om_T$ such that 
$
{\rm dist}_{\rm par}\big((y_1,s_1), \partial_{{\rm par}}\Om_T\big):=|y_1-y_2|+|s_1-s_2|^{\frac 12}
$
and  
consider the unique solution  $w$ to the following homogeneous Cauchy-Dirichlet problem
\begin{equation*}
\begin{cases}
w_t - \text{\rm div}\, a(x,t,Dw) = 0 & \text{in}\ \mathcal{Q}_{12\rho}(y_2,s_2)\cap\Om_T \\
w=u & \text{on}\ \partial_{\rm par}(\mathcal{Q}_{12\rho}(y_2,s_2)\cap\Om_T) .
\end{cases}
\end{equation*}
Also, we extend by zero the function $u$ to $\R^{n+1}\setminus\Om_T$ and by $u$ the function $w$ to $\R^{n+1}\setminus(\mathcal{Q}_{12\rho}(y_2,s_2)\cap\Om_T) $.

Now, we are in a position to estimate the measure of the investigated level sets by using~\eqref{eq_8star} twice with $\gamma=2\chi_1$ and $\gamma=1$. It follows 
\begin{eqnarray}\label{eq_20star}
&&
\left|\left\{(x,t)\in \qq_\rho : M^{*}(\mathds{1}_{\mathcal{Q}_{2\rho}(y_1,s_1)\cap\Omega_T}|Du|) (x,t)> AS\lambda\right\}\right| \nonumber \\[1.5ex]
&& \qquad \qquad \leq \left|\left\{(x,t)\in \qq_\rho : M^{*}(\mathds{1}_{\mathcal{Q}_{2\rho}(y_1,s_1)\cap\Omega_T}|Dw|)(x,t) > AS\lambda/2\right\}\right| \nonumber\\[1ex]
&& \qquad \qquad \quad +\, \left|\left\{(x,t)\in \qq_\rho : M^{*}(\mathds{1}_{\mathcal{Q}_{2\rho}(y_1,s_1)\cap\Omega_T}|Du-Dw|) (x,t)> AS\lambda/2\right\}\right| \nonumber \\[1.5ex]
&& \qquad  \qquad \leq c(AS\lambda)^{-2\chi_1}\int_{\mathcal{Q}_{2\rho}(y_1,s_1)\cap\Omega_T}|Dw|^{2\chi_1}\,{\rm d}y\,{\rm d}s  \\[1ex]
&& \qquad \qquad \quad +\, c(AS\lambda)^{-1} \int_{\mathcal{Q}_{2\rho}(y_1,s_1)\cap\Omega_T}|Du-Dw|\,{\rm d}y\,{\rm d}s. \nonumber
\end{eqnarray}
By Theorem~\ref{gehring_boundary}, applied with $R=12\rho<\rho_0$, the first term in the  right-hand side of~\eqref{eq_20star} is bounded from above by
$$
c(AS\lambda)^{-2\chi_1}|\mathcal{Q}_{6\rho}|\left(\frac{1}{|\mathcal{Q}_{12\rho}|}\int_{\mathcal{Q}_{12\rho}(y_2,s_2)\cap\Om_T}|Dw|\,{\rm d}y\,\rm{d}s\right)^{2\chi_1},
$$
since $\mathcal{Q}_{2\rho}(y_1,s_1)\subset \mathcal{Q}_{6\rho}(y_2,s_2)$. 
Thus we get
\begin{eqnarray*}
&&
\left|\left\{(x,t)\in \qq_\rho : M^*(\mathds{1}_{\mathcal{Q}_{2\rho}(y_1,s_1)\cap\Omega_T}|Du|)(x,t) > AS\lambda\right\}\right| \nonumber \\[1ex]
&& \qquad  \qquad \leq c(AS\lambda)^{-2\chi_1}|\mathcal{Q}_{6\rho}|\left(\frac{1}{|\mathcal{Q}_{12\rho}|}\int_{\mathcal{Q}_{12\rho}(y_2,s_2)\cap\Omega_T}|Du|\,{\rm d}y\,{\rm d}s\right) ^{2\chi_1}\nonumber \\[1ex]
&& \qquad  \qquad \quad +\,  c(AS\lambda)^{-2\chi_1}|\mathcal{Q}_{6\rho}| \left(\frac{1}{|\mathcal{Q}_{12\rho}|}\int_{\mathcal{Q}_{12\rho}(y_2,s_2)\cap\Omega_T}|Du-Dw|\,{\rm d}y\,{\rm d}s\right)^{2\chi_1} \nonumber \\[1ex]
&& \qquad  \qquad \quad +\, c(AS\lambda)^{-1}\frac{ |\mathcal{Q}_{6\rho}|}{|\mathcal{Q}_{12\rho}|}\,\int_{\mathcal{Q}_{12\rho}(y_2,s_2)\cap\Omega_T}|Du-Dw|\,{\rm d}y\,{\rm d}s. \nonumber
\end{eqnarray*}

In view of the comparison estimates established in Lemma~\ref{comparison_boundary} (taking $R=12\rho<\rho_0$ there), together with~\eqref{conseq_hp} and \eqref{conseq_hp2}, since $(y_0,s_0)\in\mathcal{Q}_{\rho}(y_1,s_1)\subset \mathcal{Q}_{12\rho}(y_2,s_2)\subseteq \mathcal{Q}_{R_0}$, and choosing $\varepsilon=S^{-2\chi_1+1}$, we arrive at 
$$
\left|\left\{(x,t)\in \qq_\rho : M^*(\mathds{1}_{\mathcal{Q}_{2\rho}(y_1,s_1)\cap\Omega_T}|Du|)> AS\lambda\right\}\right|\leq c(A^{-2\chi_1}+A^{-1})S^{-2\chi_1}|\mathcal{Q}_\rho|.
$$
Now, it suffices to choose the constant~$A$ so that $A\geq 3^N$ and $2cA^{-1}
\,\leq\, 1/2$; i.~\!e., $A\geq \max\big\{3^N,\, 4c\big\}$. It follows 
$$
\left|\left\{(x,t)\in \qq_\rho : M^*(\mathds{1}_{\mathcal{Q}_{2\rho}(y_1,s_1)\cap\Omega_T}|Du|)(x,t) > AS\lambda\right\}\right|\leq \frac 1 2 S^{-2\chi_1}|\mathcal{Q}_\rho|,
$$
which, again in view of \eqref{claim}, yields \eqref{eq_33}.

\vspace{0.5mm}
Finally, when $\mathcal{Q}_{4\rho}\subset \Omega_T$, the inequality in~\eqref{eq_33} plainly follows by Lemma~6.2 in~\cite{BH12b}, hence using the local interior results given by Theorem~\ref{gehring} and Lemma~\ref{comparison}.

\end{proof}

\vspace{1.5mm}

Before starting with the proof of Theorem~\ref{thm_du}, we want to emphasize that most of the differences between the problem we are dealing with and the analog in the elliptic case analyzed in~\cite{Phu12} have arisen in the previous section. For this, the general strategy of the proof follows that of the proof of Theorem~6.1 in~\cite{BH12b} (see, also, \cite[Theorem 4.1]{DP12b}.
We prefer to give some details for the reader's convenience, and we sketch the proof in three steps.

\noindent
\\ {\it Proof of Theorem~\ref{thm_du}}. \ We denote by $\qq_0\equiv B_{R_0}(x_0)\times (t_0-R_0^2,t_0)$ a fixed cylinder in $\R^{n+1}$ that contains $\Omega_T$, with its basis $B_0\equiv B_{R_0}$ being a ball of radius $R_0 \leq 2 \text{diam}(\Om)$. Note that since $\mathcal{Q}_0 \supset \Omega_T$ we can assume $R_0\geq {\rm diam}(\Omega)/2$. Now we consider the following maximal operators
\begin{equation*}
M^*(\, \cdot\,):=M^*_{0,\qq_0}( \mathds{1}_{\Omega_T}\, \cdot\,)
\quad \text{and} \quad M^*_1(\, \cdot\,):=M^*_{1,\qq_0}( \mathds{1}_{\Omega_T}\, \cdot\,).
\end{equation*}

\noindent
\\ {\it Step 1 - Application of Calder\'on-Zygmund-Krylov-Safonov covering type lemma.}
We  consider the following sets $\mathcal{X}$ and $\mathcal{Y}$ defined
\begin{eqnarray*}
\mathcal{X}& := & \Big\{(x,t)\in \mathcal{Q}_0 : M^*(|Du|)(x,t)>(AS)^{k+1}\lambda_1 
\Big\},
\\
\\
\mathcal{Y}  & := & \Big\{(x,t)\in \mathcal{Q}_0 : M^*(|Du|)(x,t)>(AS)^{k}\lambda_1 \\
&& \qquad \qquad  \qquad\qquad \qquad  \qquad \qquad \text{and} \ M_1^*(f)(x,t)>\varepsilon(AS)^k\lambda_1\Big\},
\end{eqnarray*} 
with
\begin{equation}\label{def.lambda0}
 \lambda_1:=  \bar{c}  \,S^{2\chi_1}\,\frac{1}{|\mathcal{Q}_{\bar{R}}|}\int_{\mathcal{Q}_0\cap\, \Omega_T}|Du|\,{\rm d}x\,{\rm d}t,
\end{equation}
where $S>1$, $A$, $\chi_1$, $\varepsilon$ are as in Lemma~\ref{lem_37}. We want to apply Lemma~\ref{lem_covering} with $\bar{R}=\min\{\rho_0,R_0\}/12$. Firstly, we notice that by Lemma~\ref{lem_37} (choosing $\lambda=(AS)^k\lambda_1>0$ there) the assumption (ii) is satisfied with $\delta=S^{-2\chi_1}$. Thus, it remains to prove that $|\mathcal{X}|<S^{-2\chi_1}|\mathcal{Q}_{\bar{R}}|$. For this, it suffices to use the boundedness of the maximal operators in Marcinkiewicz spaces together with the choice of $\lambda_1$ in~\eqref{def.lambda0}. Indeed,
\begin{eqnarray*}
&&\left|\Big\{(x,t)\in \mathcal{Q}_0 : M^*(|Du|)(x,t)>(AS)^{k+1}\lambda_1 
\Big\}\right| \\
&& \qquad \qquad \quad\,\,\,\,
 <  \! \frac{\bar{c}}{(AS)^{k+1}\lambda_1}\int_{\mathcal{Q}_0\cap\,\Omega_T}|Du|\,{\rm d}x\,
{\rm d}t\\
&& \qquad \qquad \quad\,\,\,\, < S^{-2\chi_1}|\mathcal{Q}_{\bar{R}}|,
\end{eqnarray*} 
where we also used the fact that $A, S >1$.
Hence, the application of Lemma~\ref{lem_covering} and the definitions of $\mathcal{X}$ and $\mathcal{Y}$ yield
\begin{eqnarray*}
&& \left|\left\{(x,t)\in \qq_0 : M^*(|Du|)(x,t) > (AS)^{k+1}\lambda_1\right\}\right| \\
&& \qquad \qquad \qquad \leq \, c_3 S^{-2\chi}\left|\left\{(x,t)\in \qq_0 : M^*(|Du|)(x,t) > (AS)^k\lambda_1\right\}\right| \\
&& \qquad\qquad \qquad \quad \, + c_3\,\left|\left\{(x,t)\in\qq_0 : M^*_1(f)(x,t) > \varepsilon(AS)^k\lambda_1\right\}\right|,
\end{eqnarray*}
for every $k \in \mathbb{N}$; with $c_3$ depending only on $n,\rho_0$, and diam$(\Om)$.

\vspace{1mm}

Now, by taking into account a multiplication by a factor $(AS)^{\frac{(k+1)N\gamma}{N-\gamma}}$, from the previous estimate we deduce
\begin{eqnarray}\label{tesi.prop.bis}
&& (AS)^{\frac{(k+1)N\gamma}{N-\gamma}}\lambda_1^{\frac{N\gamma}{N-\gamma}}\mu_1((AS)^{k+1}\lambda_1) \nonumber \\[1ex]
 &&  \qquad  \qquad  \qquad 
 \leq
c_3 (AS)^{\frac{N\gamma k}{N-\gamma}} A^{\frac{N\gamma}{N-\gamma}}
 S^{\frac{N\gamma}{N-\gamma}-2\chi}\lambda_1^{\frac{N\gamma}{N-\gamma}}\mu_1((AS)^{k}\lambda_1) \\
 && \qquad  \qquad  \qquad \quad
 +\,c_3(AS)^{\frac{N\gamma k}{N-\gamma}}\left(\frac{AS}{\varepsilon}\right)^{\frac{N\gamma}{N-\gamma}}(\lambda_1\varepsilon)^{\frac{N\gamma}{N-\gamma}}\mu_2(\varepsilon(AS)^{k}\lambda_1), \nonumber
\end{eqnarray}
where, for any $K\geq 0$, we denoted by
\begin{equation}\label{def_mu1}
  \mu_1(K):=\big|\big\{(x,t)\in \mathcal{Q}_0 : M^*(|Du|)(x,t)>K\big\}\big|
\end{equation}
and
\begin{equation}\label{def_mu2}
 \mu_2(K):=\big|\big\{(x,t)\in \mathcal{Q}_0 : M_1^*(f)(x,t)>K\big\}\big|.
\end{equation}
Now, we note that, since $\chi_1>1$ and $\gamma\leq 2N/(N+2)$, 
 we have that the quantity
\begin{equation*}
d:=2\chi_1 - \frac{N\gamma}{N-\gamma} \, \geq \, 2(\chi_1-1) \, > \, 0
\end{equation*}
is positive. Therefore, we can choose
$$
S:= \left( 4A^{\frac{N\gamma}{N-\gamma}}\right)^{\frac{1}{d}}
$$
and, by computations, inequality~\eqref{tesi.prop.bis} provides the existence of  a constant $c\equiv c(n,L, \nu, \text{diam}(\Om),\rho_0)$ such that, for every $k\geq 0$,
\begin{align}\label{tesi.prop.fin}
(A&S)^{\frac{N\gamma(k+1)}{N-\gamma}}\lambda_1^{\frac{N\gamma}{N-\gamma}}\mu_1((AS)^{k+1}\lambda_1) \\[1ex]
&  \leq \frac{1}{4}(AS)^{\frac{N\gamma k}{N-\gamma}}\lambda_1^{\frac{N\gamma}{N-\gamma}}\mu_1((AS)^{k}\lambda_1)+ \, c(AS)^{\frac{N\gamma k}{N-\gamma}}(\varepsilon\lambda_1)^{\frac{N\gamma}{N-\gamma}}\mu_2((AS)^k\varepsilon\lambda_1). \notag
\end{align}

\noindent
\\ {\it Step 2 - Level sets estimates.} 
In order to establish some Lorentz spaces estimates on level sets we proceed as follows. We take $0<\beta<\infty$ and operate some computations to get
\begin{eqnarray*}
&& \int_0^\infty[\lambda^{\frac{N\gamma}{N-\gamma}}\mu_1(\lambda)]^{\frac{\beta(N-\gamma)}{N\gamma}}\frac{{\rm d}\lambda}{\lambda}\\
&&  \qquad  \leq\left(\frac 1\beta +\tilde{c}^{\beta}(AS)^{\beta}\log(AS)\right)\lambda_1^{\beta}|\mathcal{Q}_0|^{\frac{\beta(N-\gamma)}{N\gamma}}+ \tilde{c}^{\beta}(AS)^{\beta}\log(AS) J(\infty),
\end{eqnarray*}
where
$$
J(\infty):=\sum_{k=0}^\infty \left((AS)^{\frac{N\gamma k}{N-\gamma}}(\varepsilon\lambda_1)^{\frac{N\gamma}{N-\gamma}}\mu_2((AS)^k\varepsilon\lambda_1)\right)^{\frac{\beta(N-\gamma)}{N\gamma}}
$$
and the constant $ \tilde{c}>1$ is increasing in the variables $n$,  $L, \nu$ and decreasing in~$\beta$, such that $\tilde{c}\rightarrow \infty$ as $\beta\rightarrow 0$, while it remains bounded when $\beta$ is bounded away from zero.
Thus, from this inequality, we can plainly deduce
\begin{eqnarray}\label{L.est.4}
\int_0^\infty[\lambda^{\frac{N\gamma}{N-\gamma}}\mu_1(\lambda)]^{\frac{\beta(N-\gamma)}{N\gamma}}\frac{{\rm d}\lambda}{\lambda} 
 & \leq & \left(\frac 1\beta +2\tilde{c}^\beta(AS)^\beta\log(AS)\right)\lambda_1^\beta|\mathcal{Q}_0|^{\frac{\beta(N-\gamma)}{N\gamma}\notag}\\
&& \, +\, \tilde{c}^\beta(AS)^{2\beta}\int_0^\infty[\lambda^{\frac{N\gamma}{N-\gamma}}\mu_2(\lambda)]^{\frac{\beta(N-\gamma)}{N\gamma}}\frac{{\rm d}\lambda}{\lambda}. 
\end{eqnarray}
We recall the definitions of $\mu_1$ and $\mu_2$ given in~\eqref{def_mu1}-\eqref{def_mu2},
and, now, we choose\footnote{
We note that in the computations above we preferred  to keep the generic exponent $\beta$, since it could be chosen arbitrarily in order to obtain estimates also in other rearrangement and non rearrangement spaces.
} $\beta=q\in (0,\infty)$, so that, by the definition of parabolic maximal operators, we get\label{beta} 
\begin{equation}\label{eq_quasifatto}
\|M^*(|Du|)\|_{L\left(\frac{N\gamma}{N-\gamma}, q\right)(\mathcal{Q}_0)}
\, \leq \, \tilde{c}\,\lambda_1 |\mathcal{Q}_0|^{\frac{N-\gamma}{N\gamma}}+\tilde{c}\,\|M_1^*(f)\|_{L\left(\frac{N\gamma}{N-\gamma},q\right)(\mathcal{Q}_0)},
\end{equation}
up to relabeling the constant $\tilde{c}$, by keeping the same properties as before.
\vspace{1.5mm}

Now, thanks to a classical global estimate for the gradient of $u$
established in~\cite{BG89}, which we will use as in the form presented in~\cite[Lemma 5.1]{BH12} where the exact dependence on the $L^1$-norm appears, we can estimate $\lambda_1$ from above, as follows
$$
\lambda_1
\, \leq \, c(n,\rho_0, {\rm diam}(\Omega))|\mathcal{Q}_0|^{\frac 1N-1}\int_{\mathcal{Q}_0\cap\,\Om_T}|f|\,{\rm d}x\,{\rm d}t.
$$
Therefore, for any $(x,t)\in \mathcal{Q}_0$ we have 
$
M_1^*(f)(x,t)\geq{\lambda_1}/{c(n,\rho_0,{\rm diam}(\Om))},
$
and thus
$$
\lambda_1 |\mathcal{Q}_0|^{\frac{N-\gamma}{N\gamma}}\leq \|M_1^*(f)\|_{L\left(\frac{N\gamma}{N-\gamma}\right)(\mathcal{Q}_0)}.
$$
{F}inally, in view of the standard inequalities $|Du(x,t)| \leq M^*(|Du|)(x,t)$, for a.~\!e.  $(x,t)\in \qq_0$, and 
$\|M_1^*(f)\|_{L\left(\frac{N\gamma}{N-\gamma},q\right)(\mathcal{Q}_0)}\leq \|f\|_{L(\gamma,q)(\mathcal{Q}_0)}$, we  arrive at
\begin{equation}\label{L.Du.est.2}
\|Du\|_{L\left(\frac{N\gamma}{N-\gamma},q\right)(\Omega_T)}
\, \leq \, {c}\,\|f \|_{L(\gamma,q)(\mathcal{Q}_0)},
\end{equation}
where the constant $c$ depends only on $n,L,\nu,\gamma,q,\rho_0$ and $\text{diam}(\Om)$. Recalling that $\Om_T\subset \mathcal{Q}_0$ and $f=0$ in $\mathds{R}^{n+1}\setminus \Omega_T$ we plainly deduce the desired estimate in~\eqref{zina}.

\vspace{1.5mm}

Similarly, we can deal with the Marcinkiewicz case $q=\infty$ (see, e.g., Step 3 in the proof of \cite[Theorem 3]{BH12b}) and we arrive at
\begin{equation*}
\|Du\|_{\mathcal{M}^{\frac{N\gamma}{N-\gamma}}(\Omega_T)}
\, \leq \, 
{c}\,\|f\|_{\mathcal{M}^{\gamma}(\Omega_T)}.
\end{equation*}

\noindent
\\ {\it Step 3 - Conclusion of the proof.} We recall that we proved the estimate in~\eqref{L.Du.est.2} for the approximating solutions $u\equiv u_k$ to problem~\eqref{problema_approx} with $f\equiv f_k=T_k(f)$, where $T_k$ is the truncation operator defined by~\eqref{def_troncaf}. In order to conclude, it suffices to use the lower semicontinuity of the Lorentz norms together with the standard approximating arguments stated in Section~\ref{sec_solvability}.
Hence, we have proven that
\begin{equation}\label{L.M.Duk}
\|Du_k\|_{L\left({\frac{N\gamma}{N-\gamma}},q\right)(\Omega_T)} 
\, \leq 
{c}\,\|f_k\|_{L(\gamma,q)(\mathcal{Q}_0)},
\end{equation}
where the constant $c$ does not depend on~$k$. We also notice that in \eqref{L.M.Duk} we have used the fact that
$$
\|f_k\|_{L(\gamma,q)(\qq_0)}\leq \|f\|_{L(\gamma,q)(\qq_0)},
$$
since by the definition of $f_k$ it holds that $|f_k|\leq |f|$. Thus, in order to pass to the limit on $k\to \infty$ in~\eqref{L.M.Duk}, it suffices to use the lower semicontinuity of the Lorentz norms (recall remark on page~\pageref{lcs}) together with the approximating arguments stated in Section~\ref{sec_solvability} (recall, in particular,~\eqref{def_troncaf} there). \hfill$\square$

\vspace{1mm}

\begin{rem}
As in the classic case, Theorem~\ref{thm_du} fails for the borderline choice $\gamma=1$. 
Even in the elliptic case,  one has to impose some further $L\log L$ integrability on the datum $f$ in order to obtain the following implication
\begin{equation}\label{eq_jfa}
\displaystyle
f \in L^1(-T,0; L \log L(\Om)) \ \implies \ |Du| \in L^{\frac{N}{N-1}}(\Om_T);
\end{equation}
see Theorem~1.8 in \cite{BDGO97}. However, we could extend the potential approach in the proof of Theorem~\ref{thm_du} to the analysis of the borderline case in the parabolic Orlicz space $L\log L(\Om_T)$, to get, for any cylinder $\qq_\rho\subset\R^{n+1}$,  the following estimate
\begin{equation}\label{eq_b3}
\displaystyle
\| Du \|_{L^{\frac{N}{N-1}}(\qq_{\rho/2}\cap\Om_T)}
\, \leq  \, 
c\, \|f\|^{\frac{1}{N}}_{L^1(\qq_\rho\cap\Om_T)}\| f\|^{\frac{N-1}{N}}_{L\log L(\qq_\rho\cap\Om_T)}.
\end{equation}
We stress that when considering the preceding estimate in the whole $\Om_T$, the by-now classic result~\eqref{eq_jfa} in~\cite{BDGO97} is stronger,
since it does not require any regularity assumptions on the domain $\Omega$ and it does hold in the  
 weaker assumption of $f$ being in $L^1(-T,0; L \log 
L(\Om))$, despite ~\eqref{eq_b3} provides local estimates up to the boundary.
\end{rem}

\vspace{1mm}
\subsection{Integrability of $u$} 

This section is devoted to Lorentz space estimates for the solution $u$ to \eqref{problema}. We will use the same techniques used to prove the spatial regularity of the gradient. Hence, we will obtain an estimate on the level sets of the maximal operator associated to $u$, in terms of the level sets of a maximal operator of the datum $f$, up to a correction term; that is, the equivalent to \eqref{eq_quasifatto} for $u$.
This will allow to deduce the proof of Theorem~\ref{thm_u}, stated in the introduction, which is close to the one of Theorem~\ref{thm_du}, but also a higher regularity result is needed, as stated in the second part of Theorem~\ref{gehring_boundary} proven at the end of Section~\ref{sec_comparison}.

\vspace{2mm}

Keeping in mind the notation used in Theorem~\ref{thm_du}, we have the analog of Lemma~\ref{lem_37} for $u$. Again, we recall that in what is following we always deal with the approximating solutions $u\equiv u_k$ defined in Section~\ref{sec_solvability}, and with the datum $f\equiv f_k$.

\begin{lemma}\label{lem_37u}
Let $u\in C^0([-T,0]; L^2(\Om))\cap L^2(-T,0; W^{1,2}_0(\Om))$ be the solution to~\eqref{problema_approx}. Let $A>1$ be an absolute constant depending only on $n,L,\nu$, and let $\chi>1$ be the higher integrability exponent as in {\rm Theorems~\ref{gehring}} \!and {\,\rm \ref{gehring_boundary}}. 
Then for every $S>1$ and any $\lambda>0$ we have that exists $\varepsilon=\varepsilon(S,\chi)$ such that the following statement holds:
If for some cylinder $\qq_\rho$ with $\rho<\min\{\rho_0,R_0\}/12$ it holds
\begin{equation*}
\left|\left\{ (x,t)\in \qq_\rho : M^*(u) > AS\lambda\right\}\right| \geq S^{-2\chi}|\qq_\rho|
\end{equation*}
then
$$
\qq_\rho \subset \left\{ M^*(u) > \lambda \right\} \cup \left\{ M^*_2(f)>\varepsilon\lambda\right\},
$$ 
where we denoted by
$M^*(\cdot):=M^*_{0,\qq_{R_0}}(\mathds{1}_{\Omega_T}\cdot)$ and $M^*_2(\cdot):=M^*_{2,\qq_{R_0}}(\mathds{1}_{\Omega_T}\cdot)$.
\end{lemma}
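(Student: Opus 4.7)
My plan is to replicate the proof of Lemma~\ref{lem_37} step by step, replacing $|Du|$ by $|u|$ throughout and substituting the fractional maximal $M^*_1$ by $M^*_2$. The key observation making this substitution work is that the scaling factor $|\qq_R|^{{2}/{N}} \sim R^2$ encoded in $M^*_2$ exactly matches the factor $R^2$ arising from the $L^1$ comparison estimate $\int_{\qq_R \cap \Om_T} |u - w|\,\xt \leq c R^2 \int_{\qq_R \cap \Om_T} |f|\,\xt$ obtained from Lemma~\ref{comparison_boundary} (by discarding the gradient term); this is the counterpart of the match between the factor $R$ and $M^*_1$ in the gradient case. A second essential ingredient is the boundary higher integrability for $w$ itself, namely \eqref{stima_gehring_boundary_u}, which supplies the exponent $\chi > 1$ entering the Marcinkiewicz weak-type bound.

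As in the proof of Lemma~\ref{lem_37}, I would argue by contraposition. Assuming the existence of $(y_0, s_0) \in \qq_\rho(y_1, s_1)$ with $M^*(u)(y_0, s_0) \leq \lambda$ and $M^*_2(f)(y_0, s_0) \leq \varepsilon \lambda$, the same splitting according to whether the radius of the cylinder defining the maximal function is $\leq \rho$ or $>\rho$ proves the localization inequality
\[
M^*(u)(x, t) \leq \max\Big\{M^*\big(\mathds{1}_{\qq_{2\rho}(y_1, s_1) \cap \Om_T} u\big)(x, t),\ 3^N \lambda\Big\},\qquad (x,t) \in \qq_\rho(y_1, s_1).
\]
The trichotomy on the position of $\qq_{4\rho}(y_1, s_1)$ relative to $\Om_T$ is then handled exactly as before: the case $\qq_{4\rho} \subset \R^{n+1} \setminus \Om_T$ is vacuous, while the interior case $\qq_{4\rho} \subset \Om_T$ uses Lemma~\ref{comparison} together with the interior higher integrability \eqref{stima_gehringw} (applied with $q = 1$).

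The main case $\qq_{4\rho}(y_1, s_1) \cap \partial_{\rm par}\Om_T \neq \emptyset$ is where the new ingredients enter. Choosing $(y_2, s_2) \in \partial_{\rm par}\Om_T$ realising the parabolic distance from $(y_1, s_1)$, letting $w$ solve the homogeneous Cauchy-Dirichlet problem on $\qq_{12\rho}(y_2, s_2) \cap \Om_T$ with boundary datum $u$, and extending $u$ and $w$ as in the proof of Lemma~\ref{lem_37}, the weak-type bounds \eqref{eq_8star} with $\gamma = 2\chi$ and $\gamma = 1$ give
\[
\big|\{M^*(\mathds{1}_{\qq_{2\rho} \cap \Om_T} u) > AS\lambda\}\big| \leq c(AS\lambda)^{-2\chi}\!\!\int_{\qq_{6\rho}(y_2,s_2) \cap \Om_T}\!\!\!|w|^{2\chi}\,\xt + c(AS\lambda)^{-1}\!\!\int_{\qq_{12\rho}(y_2,s_2) \cap \Om_T}\!\!\!|u - w|\,\xt.
\]
The first integral is handled via \eqref{stima_gehring_boundary_u} with $q = 1$, which reduces it to a power of $\dashint_{\qq_{12\rho}} |w|\,\xt$; the triangle inequality $|w| \leq |u| + |u - w|$ bounds the latter average by $M^*(u)(y_0, s_0) + c\rho^2 \dashint_{\qq_{12\rho}} \mathds{1}_{\Om_T}|f|\,\xt \leq \lambda + c\varepsilon\lambda$, where I used $(y_0, s_0) \in \qq_{12\rho}(y_2, s_2)$ together with the fact that $|\qq_{12\rho}|^{{2}/{N}} \sim \rho^2$ converts the integral of $|f|$ into a bound involving $M^*_2(f)(y_0, s_0)$. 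The second integral is estimated directly by Lemma~\ref{comparison_boundary}, yielding the bound $c\varepsilon\lambda|\qq_{12\rho}|$.

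Collecting everything one obtains $c\big[A^{-2\chi}(1+\varepsilon)^{2\chi} + A^{-1}\varepsilon\big]S^{-2\chi}|\qq_\rho|$; the choice $\varepsilon = S^{-2\chi+1}$, as in Lemma~\ref{lem_37}, and then $A \geq \max\{3^N, 4c\}$ (the factor $3^N$ ensures that $AS\lambda \geq 3^N\lambda$, so that the localization inequality actually reduces the problem to the truncated maximal function) makes this bound $\leq \frac{1}{2} S^{-2\chi}|\qq_\rho|$, which is precisely \eqref{eq_33}. The only conceptual novelty with respect to Lemma~\ref{lem_37}, and the main point to verify, is the matching scaling between the $R^2$ of the comparison estimate and the $|\qq_R|^{{2}/{N}}$ weight in $M^*_2$; once this is in place, the argument is a clean adaptation of the gradient case.
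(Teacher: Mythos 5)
Your proof is correct and takes the same route that the paper itself sketches in a single paragraph, namely running the argument of Lemma~\ref{lem_37} verbatim with $u$ in place of $|Du|$, $M^*_2$ in place of $M^*_1$, and the $u$-versions of the higher integrability estimates \eqref{stima_gehringw}, \eqref{stima_gehring_boundary_u} and of the comparison estimates in Lemmas~\ref{comparison} and \ref{comparison_boundary}. The scaling check you single out --- that $|\qq_R|^{2/N}\sim R^2$ matches exactly the factor $R^2$ left by discarding the gradient term in Lemma~\ref{comparison_boundary} --- is indeed the mechanism that makes the substitution of $M^*_1$ by $M^*_2$ go through, a point the paper leaves tacit.
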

\begin{proof}
The proof of this lemma is essentially the same as that of Lemma~\ref{lem_37}. It first suffices to substitute $M^*(|Du|)$ and $M^*_1(f)$ by
$M^*(u)$ and $M^*_2(f)$, respectively.
Then, we need to use the comparison estimates for $u$ stated in Lemma~\ref{comparison} and Lemma~\ref{comparison_boundary}, and the higher integrability, again for $u$, given in~\eqref{stima_gehringw} and \eqref{stima_gehring_boundary_u}. This will permit to introduce the parameter $\chi>1$ which can be chosen arbitrarily large; and this is another difference with respect to the estimates in Lemma~\ref{lem_37}, in which such a parameter (read $\chi_1>1$) was fixed.
\end{proof}

\vspace{2mm}

\noindent
\\ {\it Proof of Theorem~\ref{thm_u}.}
Since we closely follow the proof of Theorem~\ref{thm_du},  we prefer to keep the division of the proof in the same three steps.

\noindent
\\ {\it Step 1 - Application of Calder\'on-Zygmund-Krylov-Safonov covering type lemma}.
 We want to apply Lemma~\ref{lem_covering} to the sets  
\begin{eqnarray*}
\mathcal{X} & := &\Big\{(x,t)\in \mathcal{Q}_0 : M^*(u)(x,t)>(AS)^{k+1}\lambda_0 
\Big\},
\\
\\
\mathcal{Y}& := &\Big\{(x,t)\in \mathcal{Q}_0 : M^*(u)(x,t)>(AS)^{k}\lambda_0 \\
&& \qquad \qquad  \qquad\qquad \qquad  \qquad \qquad \text{and} \ M_2^*(f)(x,t)>\varepsilon(AS)^k\lambda_0\Big\},
\end{eqnarray*}
\begin{equation*}
\text{with} \, \ \lambda_0:=  \bar{c} \,S^{2\chi}\frac{1}{|\mathcal{Q}_{\bar{R}}|}\int_{\mathcal{Q}_0\cap\,\Omega_T}|u|\,{\rm d}x\,{\rm d}t\quad \text{and}\quad \bar{R}=\frac{\min\{\rho_0,R_0\}}{12}.
\end{equation*}
In view of Lemma~\ref{lem_37u}, the assumption (ii) of Lemma~\ref{lem_covering} (choosing $\lambda=(AS)^k\lambda_0$ there) is satisfied with $\rho<\bar{R}=\min\{\rho_0,R_0\}/12$ and $\delta=S^{-2\chi}$. To prove (i), that is $|\mathcal{X}|<S^{-2\chi}|\mathcal{Q}_0|$, it suffices to use the boundedness of the maximal operators in Marcinkiewicz spaces together with the choice of $\lambda_0$ and the fact that $A, S >1$. It follows
\begin{eqnarray}\label{tesi.prop.finu}
&&  \nonumber(AS)^{\frac{N\gamma(k+1)}{N-2\gamma}}\lambda_0^{\frac{N\gamma}{N-2\gamma}}\mu_1((AS)^{k+1}\lambda_0) \nonumber\\[1ex]
&&    \qquad    \qquad  \leq (AS)^{\frac{N\gamma k}{N-2\gamma}}
A^{\frac{N\gamma}{N-2\gamma}}S^{\frac{N\gamma}{N-2\gamma}-2\chi_0}
\lambda_0^{\frac{N\gamma}{N-2\gamma}}\mu_1((AS)^{k}\lambda_0)\\
 &&    \qquad    \qquad  \quad + \,(AS/\varepsilon)^{\frac{N\gamma}{N-2\gamma}} (AS)^{\frac{N\gamma k}{N-\gamma}}(\varepsilon\lambda_0)^{\frac{N\gamma}{N-\gamma}}\mu_2((AS)^k\varepsilon\lambda_0), \ \ \text{for every} \ k\geq 0, \nonumber
\end{eqnarray}
where, for any $K>0$,
 we denoted by
\begin{equation*}
  \mu_1(K):=\big|\big\{(x,t)\in \mathcal{Q}_0 : M^*(u)(x,t)>K\big\}\big|
\end{equation*}
and
\begin{equation*}
 \mu_2(K):=\big|\big\{(x,t)\in \mathcal{Q}_0 : M_2^*(f)(x,t)>K\big\}\big|.
\end{equation*}
At this level, we can take advantage of the possibility to choose $\chi$ large enough to satisfy
$$
d:= 2\chi - \frac{N\gamma}{N-2\gamma}>0,
$$
and, using the $S:=\left(4 A^{\frac{N\gamma}{N-2\gamma}}\right)^{\frac{1}{d}}$, it follows $A^{\frac{N\gamma}{N-2\gamma}} S^{\frac{N\gamma}{N-2\gamma}-2\chi} \leq 1/4$, so that the estimate in~\eqref{tesi.prop.finu} becomes
\begin{eqnarray*}
\nonumber(AS)^{\frac{N\gamma(k+1)}{N-2\gamma}}\lambda_0^{\frac{N\gamma}{N-2\gamma}}\mu_1((AS)^{k+1}\lambda_0) 
&  \leq & \frac{1}{4} (AS)^{\frac{N\gamma k}{N-2\gamma}}
\lambda_0^{\frac{N\gamma}{N-2\gamma}}\mu_1((AS)^{k}\lambda_0)\\[0.5ex]
 &&  \,+ \, c 
 (AS)^{\frac{N\gamma k}{N-\gamma}}(\varepsilon\lambda_0)^{\frac{N\gamma}{N-\gamma}}\mu_2((AS)^k\varepsilon\lambda_0), \nonumber
\end{eqnarray*}
for every  $k\geq 0$.

\noindent
\\ {\it Step 2 - Level sets estimates.}   Arguing as in 
 Step 2 in the proof of Theorem~\ref{thm_du} we obtain the following Lorentz  estimates for the solution $u$,
$$
\|u\|_{L\left(\frac{N\gamma}{N-2\gamma},q\right)(\Omega_T)}\leq {c}\,\|f \|_{L(\gamma,q)(\mathcal{Q}_0)},
$$
for any $0<q<\infty$. Similarly, in the case of Marcinkiewicz space, that is $q=\infty$, we arrive at
$$
\|u\|_{\mathcal{M}^{\frac{N\gamma}{N-2\gamma}}(\Omega_T)}
\, \leq \, {c}\,\|f\|_{\mathcal{M}^{\gamma}(\mathcal{Q}_0)},
$$
Notice that at this stage we used the fact that $M^*_2$ has a higher regularizing effect with respect to that of $M^*_1$. Moreover since we are assuming  $f=0$ in $\mathds{R}^{n+1}\setminus \Omega_T$ and $\Om_T\subset \mathcal{Q}_0$ we obtain exactly the desired estimate.

\noindent
\\ {\it Step 3 - Conclusion of the proof.} We can now conclude the proof using standard approximation methods, again by means of the lower semicontinuity of the Lorentz norms.
\hfill $\square$

\vspace{2mm}

\vspace{3mm}

\end{document}